\documentclass[11pt]{amsart}
\usepackage{amsmath,amssymb,amsthm,mathtools}
\usepackage[margin=1.1in]{geometry}
\usepackage{graphicx}
\usepackage{appendix}
\usepackage{hyperref}
\hypersetup{colorlinks=true,
            linkcolor=blue,
            anchorcolor=blue,
            citecolor=blue}

\newtheorem{theorem}{Theorem}[section]
\newtheorem{lemma}[theorem]{Lemma}
\newtheorem{proposition}[theorem]{Proposition}
\newtheorem{corollary}[theorem]{Corollary}
\theoremstyle{definition}

\newtheorem*{definition*}{Definition}
\newtheorem{remark}[theorem]{Remark}
\newtheorem{example}[theorem]{Example}

\numberwithin{equation}{section}

\newcommand{\Snp}{\mathbb{H}^{n+1}}

\def\p{\partial}
\def\n{\nabla}

\def\l{\langle}
\let\ringaccent\r
\def\r{\rangle}

\title[Non-homogeneous curvature flows in hyperbolic space]
{Non-homogeneous curvature flows in hyperbolic space}

\author{Hongyi Sheng}
\address{Institute for Theoretical Sciences, Westlake Institute for Advanced Study, Westlake University,
Hangzhou 310030, China}
\email{shenghongyi@westlake.edu.cn}

\author{Weimin Sheng}
\address{School of Mathematical Sciences, Zhejiang University, Hangzhou 310058, China}
\email{weimins@zju.edu.cn}

\author{Jiazhuo Yang}
\address{School of Mathematical Sciences, Zhejiang University, Hangzhou 310058, China}
\email{yangjiazhuo@zju.edu.cn}

\subjclass[2020]{35K55, 53C21}
\keywords{Non-homogeneous curvature flow, hyperbolic space, nonlinear parabolic equation, asymptotic behavior}

\date{}
\begin{document}

\begin{abstract}
Let $\Snp$ be hyperbolic space of sectional curvature $-1$, with a
fixed point $o$.  We study the non-homogeneous curvature flow
$\partial_tX=-f(\rho)\sigma_k^\alpha\nu$ of smooth, closed, strictly
$h$-convex hypersurfaces enclosing $o$, where $\rho$ is the geodesic
distance to $o$ and $\alpha>0$.  The radial weight $f$ is modeled on
$\sinh^\beta r$; we treat both the supercritical regime
$\beta>1+k\alpha$ and the critical regime $\beta=1+k\alpha$.  Under
the structural condition
$\bigl((f^{1/(1+k\alpha)})'/\cosh(\rho/2)\bigr)'\geq0$, the flow
exists smoothly for all time, preserves strict $h$-convexity, and
contracts to $o$.  Normalized in geodesic normal coordinates at $o$,
the radial function converges, exponentially in normalized time, in
$C^\infty(\mathbb S^n)$: to $1$ in the supercritical case, and to a
positive constant $R_\infty$ determined by the initial hypersurface in
the critical case.  No symmetry assumption and no curvature pinching
beyond strict $h$-convexity are required.
\end{abstract}

\maketitle
\tableofcontents
\enlargethispage{4pt}

\section{Introduction}
Curvature flows of closed convex hypersurfaces have been studied
extensively.  In Euclidean space, Huisken~\cite{Hui84} proved that
strictly convex hypersurfaces moving by their mean curvature contract
to round points in finite time.  Chow~\cite{Chow85,Chow87} obtained
the analogous result for the speed $\sigma_n^{1/n}$ and, under an
additional pinching assumption on the initial hypersurface, for the
speed $\sigma_2^{1/2}$.  Andrews~\cite{And94,And07} extended the theory
to general degree-one homogeneous curvature functions that are convex,
or concave and inverse-concave, covering in particular
$\sigma_k^{1/k}$ and $(\sigma_k/\sigma_l)^{1/(k-l)}$,
$1\leq l<k\leq n$.  For speeds that are not homogeneous of degree one,
the asymptotic shape depends essentially on the degree of homogeneity:
for the Gauss curvature flows $\partial_tX=-K^\alpha\nu$, the works of
Tso~\cite{Tso85}, Andrews~\cite{And99}, Andrews--Chen~\cite{AC12},
Guan--Ni~\cite{GN17}, Andrews--Guan--Ni~\cite{AGN16}, and
Brendle--Choi--Daskalopoulos~\cite{BCD17} together show that every
smooth, closed, strictly convex hypersurface contracts to a round
point for all $\alpha>1/(n+2)$, while at the critical exponent
$\alpha=1/(n+2)$, which corresponds to the affine normal flow, the
normalized limit is an ellipsoid~\cite{BCD17}.  For powers of
$\sigma_k$, Li--Wang--Wu~\cite{LWW21} proved, for $n\geq3$, convergence
of the rescaled $\sigma_k^\alpha$-flow to the unit sphere for closed,
strictly convex, axially symmetric hypersurfaces whenever
$\alpha\in[1/k,c(n,k)]$.

In non-Euclidean space forms, new phenomena appear: in hyperbolic
space, ordinary convexity need not be preserved by a general
contracting curvature flow.  Andrews--Chen~\cite{AC} overcame this
difficulty for compact surfaces in $\mathbb H^3$ under positive
intrinsic scalar curvature, and Chen--Huang~\cite{CH} showed that the
$K^\alpha$-flow contracts strictly convex hypersurfaces to a point,
with convergence to a geodesic sphere after rescaling when
$\alpha>1/(n+2)$.  A particularly natural condition for global
hyperbolic flows is horospherical convexity ($h$-convexity): all
principal curvatures are at least $1$, the common principal curvature
of a horosphere.  Flows of $h$-convex hypersurfaces have been studied,
among others, by Cabezas-Rivas--Miquel~\cite{CRM07},
Gerhardt~\cite{Ger11}, Makowski~\cite{Makowski12},
Andrews--Wei~\cite{AW18}, Andrews--Chen--Wei~\cite{ACW21},
Hu--Li--Wei~\cite{HLW22}, Scheuer--Xia~\cite{SX19}, and
Wang--Wei--Zhou~\cite{WWZ23}; Pipoli~\cite{Pipoli22} considered
non-homogeneous expanding flows in hyperbolic space.  In the sphere,
Gerhardt~\cite{Ger15} used polar duality to identify contracting and
expanding flows and to prove round convergence; see also
Guang--Li--Wang~\cite{GLW24} for a flow approach to the Minkowski
problem in the sphere.

A different line of work allows the speed to depend explicitly on the
position of the hypersurface.  In Euclidean space,
Li--Sheng--Wang~\cite{LSWJEMS} introduced the anisotropic Gauss
curvature flow with speed $f(\nu)r^\beta K$ as a parabolic approach to
the Aleksandrov and dual Minkowski problems, and proved
in~\cite{LSW20} that the fully nonlinear flow with speed
$r^\beta\sigma_k$ contracts to the origin and, after rescaling,
converges smoothly to a sphere centred at the origin.
Li--Xu--Zhang~\cite{LXZ} extended the theory to the power-type speed
$r^{\alpha/\beta}\sigma_k^{1/\beta}$ and to star-shaped $k$-convex
hypersurfaces, and Sheng--Yang~\cite{SY} treated the general
non-homogeneous speed $f(r)\sigma_k^\alpha$.  In hyperbolic space,
Hong~\cite{Hong21} considered the normalized flow
\[
  \partial_tX
  =-(\sinh\rho)^{\alpha/\beta}\sigma_k^{1/\beta}\nu+\gamma V,
  \qquad V=\sinh\rho\,\partial_\rho,
\]
with a normalizing constant $\gamma$, and proved, under suitable
parameter and convexity assumptions, its exponential
convergence to a geodesic sphere.

The present paper studies the \emph{unforced} contracting counterpart
of these radial flows in hyperbolic space, and differs
from~\cite{Hong21} in the following essential points.  First, the
flow \eqref{eq:unnormalized-flow} below contains no conformal-field
term: it contracts to the point $o$ in infinite time, and the
normalization is performed at the contraction point.  Since hyperbolic
space has no ambient homothety, this normalization acts on the radial
function in geodesic normal coordinates at $o$, rather than on the
embedding.  Second, we identify the critical order $\beta=1+k\alpha$
of the radial weight: in the supercritical regime $\beta>1+k\alpha$
the normalized radial function converges to $1$, while at the critical
order the limit is a positive constant $R_\infty$ depending on the
initial hypersurface (see Example~\ref{ex:critical-sphere} below).
Third, the structural condition \eqref{eq:main-profile-root-convexity}
allows general radial weights modeled on $\sinh^\beta r$, not only
pure powers.  On the other hand, we work with strictly $h$-convex
initial hypersurfaces, which is a stronger assumption than the
convexity hypotheses of some of the results quoted above.

Let $\mathbb{H}^{n+1}$ be the $(n+1)$-dimensional hyperbolic space of
sectional curvature $-1$, and fix $o\in\mathbb{H}^{n+1}$.  In geodesic
polar coordinates centred at $o$, the hyperbolic metric is
\begin{equation}\label{eq:hyperbolic-metric}
  \overline g=d\rho^2+\sinh^2\rho\,\sigma_{\mathbb S^n},
\end{equation}
where $\rho$ is the geodesic distance from $o$ and
$\sigma_{\mathbb S^n}$ is the standard metric on $\mathbb S^n$.
Throughout this paper, all hypersurfaces are assumed connected.  Let
$X_0:M\to\mathbb H^{n+1}$ be a smooth, closed, connected, strictly
$h$-convex hypersurface enclosing $o$, and let
$f\in C^\infty((0,\infty))\cap C^0([0,\infty))$ satisfy $f(0)=0$ and
$f(r)>0$ for $r>0$.  We consider the non-homogeneous curvature flow
\begin{equation}\label{eq:unnormalized-flow}
  \begin{cases}
    \displaystyle \frac{\partial X}{\partial t}
      =-f(\rho)\sigma_k^\alpha\nu,\\[1mm]
    X(\cdot,0)=X_0,
  \end{cases}
\end{equation}
where $\nu$ is the outward unit normal and $\sigma_k$ is the $k$-th
elementary symmetric function of the principal curvatures.  To the
best of our knowledge, contracting curvature flows in hyperbolic space
of the form \eqref{eq:unnormalized-flow} have not previously been
studied, even for the natural radial weight $f(\rho)=\sinh^\beta\rho$.
The use of $\sinh\rho$, rather than the bare distance $\rho$, reflects
the warped-product geometry of \eqref{eq:hyperbolic-metric} and
provides the reference weight in the structural condition below.

Our results apply to every $\alpha>0$ and to every smooth, closed,
connected, strictly $h$-convex initial hypersurface enclosing $o$: we
impose neither axial symmetry nor any quantitative curvature-pinching
or strictness threshold beyond strict $h$-convexity itself.  We prove
that strict $h$-convexity is preserved, that the solution exists for
all time and contracts to $o$, and that its normalization converges
smoothly and exponentially, in normalized time, to a centred geodesic
sphere.  The proof must incorporate genuinely hyperbolic effects:
besides the normalization issue mentioned above, the
ambient-curvature reaction has the opposite sign from the spherical
case and is controlled through $h$-convexity.

The equation \eqref{eq:unnormalized-flow} is parabolic on the
G\ringaccent{a}rding cone
$\Gamma_k^+=\{\kappa\in\mathbb R^n:\sigma_j(\kappa)>0,\ j=1,\ldots,k\}$.
In this paper we work on the strictly $h$-convex cone
\[
  \Gamma_h^+=\{\kappa\in\mathbb R^n:\kappa_i>1,\ i=1,\ldots,n\}
\]
and use the inverse Weingarten map to obtain a quantitative lower
bound for all normalized principal curvatures.  Strict $h$-convexity
is the hyperbolic analogue of uniform convexity and is stronger than
ordinary geodesic convexity.  By the hyperbolic Hadamard--Stoker
theorem~\cite{Alex77}, a closed, connected, strictly $h$-convex
hypersurface is embedded and bounds a strictly convex body; since that
body contains $o$, the hypersurface is automatically star-shaped with
respect to $o$ and hence admits the radial-graph representation used
below (see Section~\ref{subsec:normalized-flow}).  No separate
star-shapedness hypothesis is therefore needed.

\subsection{The radial-weight conditions}

Throughout the paper $f(0)=0$, $f(r)>0$ for $r>0$, and
$\beta\geq1+k\alpha$.  In the supercritical case
$\beta>1+k\alpha$, we write $f(r)=\sinh^\beta r+g(r)$ and impose the finite
order flatness condition stated in Theorem~\ref{thm:main}.  In the
critical case $\beta=1+k\alpha$, the remainder is required to decay
strictly faster than the model term, as stated in
Theorem~\ref{thm:main-critical}.  In both cases the structural
condition
\[
  \left(f^{\frac1{1+k\alpha}}\right)''
  \geq\frac12\tanh\frac r2
  \left(f^{\frac1{1+k\alpha}}\right)'
\]
(see \eqref{eq:main-profile-root-convexity} and
\eqref{eq:critical-profile-root-convexity} below)
is the radial convexity input in the $h$-convexity and $C^2$
estimates.  By the chain rule, this condition is equivalent to
\[
  \left(\frac{(f^{\frac1{1+k\alpha}})'}{\cosh\frac r2}\right)'
  \geq0
\qquad (r>0),
\]
i.e.\ to the monotonicity stated in the abstract; for
$f(r)=\sinh^\beta r$ with $\beta\geq1+k\alpha$, set
$a:=\beta/(1+k\alpha)\geq1$.  Then
\[
  Q(r):=
  \frac{(f^{\frac1{1+k\alpha}})'}{\cosh\frac r2}
  =a\frac{\sinh^{a-1}r\,\cosh r}{\cosh\frac r2},
\]
and
\[
  \frac{Q'(r)}{Q(r)}
  =(a-1)\coth r+\tanh r-\frac12\tanh\frac r2>0.
\]
Thus the model weight satisfies the structural condition.

For the all-orders estimates in the normalized variables we shall also
use the following scale-invariant regularity condition at the origin:
\begin{equation}\label{eq:profile-symbol-bounds-all-orders}
  \mathcal P_j(R_0):=
  \sup_{0<r\leq R_0}r^{j-\beta}|f^{(j)}(r)|<\infty
  \qquad\text{for every }j=0,1,2,\ldots
  \text{ and }R_0>0.
\end{equation}
Here the derivatives are taken on $(0,\infty)$; no smooth extension of
$f$ across $r=0$ is required.  This condition is used only to make the
higher-order normalized estimates uniform as $\tau\to\infty$.  The
$C^0$, $C^1$, and $C^2$ estimates, preservation of strict
$h$-convexity, and exclusion of finite-time singularities require only
the bounds through $j=2$ that follow from the hypotheses stated above.

\subsection{Main results}

\begin{theorem}\label{thm:main}
Let $1\leq k\leq n$, $\alpha>0$, and suppose $n\geq2$. Let $M_0\subset\mathbb H^{n+1}$
be a smooth, closed, connected, strictly $h$-convex hypersurface
enclosing $o$.

Let
\begin{equation}\label{eq:C0-supercritical-decomposition-hyperbolic}
  \beta>1+k\alpha,
  \qquad
  m:=\lfloor\beta\rfloor,
  \qquad
  g(r):=f(r)-\sinh^\beta r.
\end{equation}
Assume that
\begin{equation}\label{eq:C0-supercritical-flatness-hyperbolic}
  g\in C^{m+1}([0,\infty)),
  \qquad
  g'(0)=\cdots=g^{(m)}(0)=0
\end{equation}
and
\begin{equation}\label{eq:main-profile-root-convexity}
  \left(f^{\frac{1}{1+k\alpha}}\right)''
  \geq\frac12\tanh\frac r2
  \left(f^{\frac{1}{1+k\alpha}}\right)',
  \qquad r\in(0,\infty).
\end{equation}
Assume also the all-orders scale-invariant bounds
\eqref{eq:profile-symbol-bounds-all-orders}.
Then the solution of \eqref{eq:unnormalized-flow} exists smoothly for
all $t\geq0$, remains strictly $h$-convex, and converges to $o$ as
$t\to\infty$.  Moreover, with the normalization $\lambda$ and the
normalized time $\tau$ defined in
\eqref{eq:lambda-definition}--\eqref{eq:time-and-lambda-identities}
of Section~\ref{subsec:normalized-flow}, the normalized radial
function $\widetilde\rho(\cdot,\tau)$ converges to $1$ in
$C^\infty(\mathbb S^n)$ as $\tau\to\infty$, exponentially in
$\tau$: for every $\ell\in\mathbb N$ there are constants
$C_\ell,c_\ell>0$ such that
\[
  \|\widetilde\rho(\cdot,\tau)-1\|_{C^\ell(\mathbb S^n)}
  \leq C_\ell e^{-c_\ell\tau}
  \qquad (\tau\geq0).
\]
\end{theorem}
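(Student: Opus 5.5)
The plan is to reduce the theorem to a priori estimates for the normalized flow of Section~\ref{subsec:normalized-flow}, proving the $C^0$, $C^1$, $C^2$ and $h$-convexity bounds uniformly in $\tau$, and then to extract exponential convergence from a linearization about the sphere of radius $1$.

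\textbf{Radial graph and comparison.} By the Hadamard--Stoker remark in the introduction, $M_t$ is a radial graph $\rho(\cdot,t)$ over $\mathbb S^n$ for as long as strict $h$-convexity persists. The graph function then satisfies a scalar fully nonlinear parabolic equation. Geodesic spheres centred at $o$ solve the ODE
\[
r'=-f(r)\,\sigma_k(\coth r,\ldots,\coth r)^\alpha.
\]
Near $0$ this reads $r'\approx -c\,r^{\beta-k\alpha}$ with $\beta-k\alpha>1$, so these spheres shrink to $o$ only as $t\to\infty$ and never reach it in finite time. Using the inner and outer spheres as barriers, the maximum principle gives
\[
\rho_-(t)\le\rho\le\rho_+(t).
\]
The normalization $\lambda(t)$ is presumably built from this ODE solution. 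The flatness condition \eqref{eq:C0-supercritical-flatness-hyperbolic} lets us write $f=\sinh^\beta r\,(1+O(r^{m+1-\beta}))$. Since $m+1>\beta$, the ratio $\rho_\pm/\lambda$ stays pinched, so $\widetilde\rho$ is bounded above and below uniformly in $\tau$. The same comparison shows that $\max\widetilde\rho/\min\widetilde\rho$ is nonincreasing up to a factor tending to $1$.

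\textbf{Gradient bound.} A $C^1$ bound on $\log\widetilde\rho$ then follows from the usual maximum principle applied to $|\nabla\rho|^2/\sinh^2\rho$, or to the support function $u=\sinh\rho\,\langle\partial_\rho,\nu\rangle$, using star-shapedness.

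\textbf{Main obstacle: curvature bounds and $h$-convexity.} This is the step I expect to be hardest, namely the uniform $C^2$ estimate together with preservation of strict $h$-convexity in normalized variables. I would handle it with two maximum-principle arguments.
\begin{itemize}
\item For the upper curvature bound, apply the maximum principle to a quantity of the form $\log\kappa_{\max}-A\log u$ in the spirit of Tso, using the speed $F=f\sigma_k^\alpha$.
\item For the lower bound, work with the inverse Weingarten map, i.e.\ the shifted tensor $(h_i^j-\delta_i^j)^{-1}$. Here the structural condition \eqref{eq:main-profile-root-convexity} should exactly cancel the bad radial Hessian terms in the evolution of the speed. Writing the speed as $(f^{1/(1+k\alpha)})^{1+k\alpha}\sigma_k^\alpha$ and using $\mathrm{Hess}\,\rho$ in $\mathbb H^{n+1}$ together with $h$-convexity should produce the factor $\tfrac12\tanh(r/2)$, which is expected to come from the horosphere comparison.
\end{itemize}
These bounds, expressed for the normalized principal curvatures $\lambda\kappa_i$, yield uniform two-sided bounds. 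Krylov--Safonov and Schauder estimates, with \eqref{eq:profile-symbol-bounds-all-orders} supplying scale-invariant control on the coefficients, then give $C^\ell$ bounds for all $\ell$. This also excludes finite-time singularities and gives $\rho\to0$, i.e.\ convergence to $o$.

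\textbf{Convergence.} With uniform bounds in hand, the rescaled equation is asymptotically the Euclidean $|x|^\beta\sigma_k^\alpha$ flow plus errors of size $O(\lambda^2)+O(\lambda^{m+1-\beta})$, where the first comes from the curvature of $\mathbb H^{n+1}$ and the second from $g$. These errors decay exponentially in $\tau$. In the supercritical case the linearization at $\widetilde\rho\equiv1$ is
\[
\partial_\tau v=c\,\Delta_{\mathbb S^n}v-(\beta-1-k\alpha)c'\,v+\ldots,
\]
which has a strictly negative spectrum, including on constants. Combining this with the $C^0$ pinching gives $\|\widetilde\rho-1\|_{L^2}\le Ce^{-c\tau}$. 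Interpolation with the uniform $C^\ell$ bounds then upgrades this to exponential decay in every $C^\ell$ norm.
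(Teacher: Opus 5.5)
Your outline is sound in its routine parts ($C^0$ by centred spheres, $C^1$ from the support function, and the convergence step). The step you flag as the main obstacle, however, is only anticipated (``should cancel'', ``is expected to come from''). That step is preservation of strict $h$-convexity together with a uniform lower bound for the normalized curvatures, and it is exactly where every hypothesis on $f$ is used.

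The paper applies the maximum principle to $\Lambda=\lambda_{\max}(\hat h^{-1})=\lambda/\kappa_1$. Strict $h$-convexity then reads $\Lambda<\lambda$, and a bound on $\Lambda$ is the scale-invariant statement $\kappa_1\geq c\lambda$. A bound on your shifted tensor $(h-\mathrm{Id})^{-1}$ would only give $\kappa_1\geq1+c$.

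Inverse concavity of $\sigma_k^{1/k}$ bounds the $\ddot F$ and $\dot F\,\hat h^{-1}$ gradient terms above by $-\Lambda^2\lambda^\beta f\,k\alpha(k\alpha+1)G^{k\alpha-2}|\widetilde\nabla_1G|^2$. Completing the square against the mixed term $-2\Lambda^2\lambda^{\beta-1}f'\,\widetilde\nabla_1\widetilde\rho\,\widetilde\nabla_1F$ is what turns $f''$ into $f''-\frac{k\alpha}{1+k\alpha}\frac{(f')^2}{f}=(1+k\alpha)f^{\frac{k\alpha}{1+k\alpha}}\bigl(f^{\frac1{1+k\alpha}}\bigr)''$. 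This happens in the evolution of $\hat h^{-1}$, through $\widetilde\nabla^2\widetilde\Phi$, not in the speed equation, which contains no $f''$.

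With $\nabla^2\rho=\coth\rho\,(g-d\rho\otimes d\rho)-\langle\partial_\rho,\nu\rangle h$ on $M_t$, at a point where $\kappa_1=1$ everything reduces to positivity of
\[
  f'(\rho)\bigl(\coth\rho\,(1-s)-c\bigr)
  +\Bigl(f''-\tfrac{k\alpha}{1+k\alpha}\tfrac{(f')^2}{f}\Bigr)s,
  \qquad
  c=\langle\partial_\rho,\nu\rangle,
  \quad
  0\leq s=|\widetilde\nabla_1\widetilde\rho|^2\leq1-c^2.
\]
This expression is affine in $s$ and positive at $s=0$. At $s=1-c^2$ (for $c<1$) it requires $f''-\frac{k\alpha}{1+k\alpha}\frac{(f')^2}{f}\geq f'\,\frac{c-c^2\coth\rho}{1-c^2}$, and
\[
  \frac{c-c^2\coth\rho}{1-c^2}
  =\frac12\tanh\frac\rho2
   -\frac{(c-\tanh\frac\rho2)^2}{2\tanh\frac\rho2\,(1-c^2)}.
\]
So the structural constant $\frac12\tanh\frac\rho2$ is exactly the worst case over $c$, attained at $c=\tanh\frac\rho2$, the value on a horosphere through $o$. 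The paper uses the supporting-horoball estimate $\langle\partial_\rho,\nu\rangle>\tanh\frac\rho2$ to exclude that case and make the first-contact inequality strict.

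For the quantitative bound you additionally need $1-s\geq v^{-2}$ from the $C^1$ estimate. You also need a positive lower bound for $\widetilde\Phi$, from a separate maximum-principle argument, to absorb the term $\gamma_0\Lambda$ created by the normalization. The coercive term is only $-c\widetilde\Phi\Lambda^2$, and the trivial bound $\widetilde\Phi\gtrsim\Lambda^{-k\alpha}$ is too weak once $k\alpha\geq1$. Your proposal names some of these ingredients but never shows how they combine, so the core of the theorem is missing.

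Two further steps would not work as written.

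\emph{Upper curvature bound.} Your test function $\log\kappa_{\max}-A\log u$ runs into the non-concavity of $\sigma_k^\alpha=G^{k\alpha}$ when $k\alpha>1$. Then $\ddot F$ contains the positive term $k\alpha(k\alpha-1)G^{k\alpha-2}(\widetilde\nabla_1G)^2$, which has the wrong sign at a maximum of $\kappa_{\max}$ and is not controlled by the first-order condition there. The paper instead applies Tso's trick to the speed, $Q=\log\widetilde\Phi-\log(\widetilde u-b)$ with $b=\frac12\inf\widetilde u$, using $\lambda^\beta f\dot F^{ij}(\hat h^2)_{ij}\geq c\widetilde\Phi^{1+1/(k\alpha)}$. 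It then gets $\hat\kappa_i\leq C$ from $\sigma_k(\hat h)\leq C$ and the lower bound.

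\emph{Higher regularity.} Krylov--Safonov plus Schauder does not give $C^{2,\vartheta}$ for a fully nonlinear equation. You need Evans--Krylov-type estimates, which apply because $\sigma_k^{1/k}$ is concave, so the operator has convex level sets in the Hessian.

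Also, the theorem's $\lambda$ grows: $\widetilde\rho=\lambda\rho$, $\hat\kappa_i=\kappa_i/\lambda$, and the errors are $O(\lambda^{-2})+O(\lambda^{-(m+1-\beta)})$. Your formulas use the reciprocal.

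Your convergence step, by contrast, is a legitimate alternative to the paper's route, which is a maximum principle for $|\bar\nabla\log\tanh(\rho/2)|^2$ followed by an ODE for the mean. The linearization $\frac{k\alpha}{n}\gamma_0\Delta-(\beta-1-k\alpha)\gamma_0$ does have a spectral gap. Your own barriers make it even simpler. Take $\mu=\beta-k\alpha-1$, $\lambda=(1+\mu\gamma_0t)^{1/\mu}$, and note $f(r)\coth^{k\alpha}r=r^{1+\mu}\bigl(1+O(r^{m+1-\beta})\bigr)$. Then every centred sphere solution satisfies $|\lambda r(t)-1|\leq Ce^{-c\tau}$, hence $\|\widetilde\rho-1\|_{C^0}\leq Ce^{-c\tau}$. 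Interpolation with the uniform $C^\ell$ bounds then gives all the stated rates.
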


\begin{theorem}\label{thm:main-critical}
Let $1\leq k\leq n$, $\alpha>0$, and suppose $n\geq2$.  Let
$M_0\subset\mathbb H^{n+1}$ be a smooth, closed, connected, strictly
$h$-convex hypersurface enclosing $o$.  Set
\begin{equation}\label{eq:C0-critical-decomposition-hyperbolic}
  \beta=1+k\alpha,
  \qquad
  g(r):=f(r)-\sinh^{1+k\alpha}r.
\end{equation}
Assume that there exists $\delta>0$ such that
\begin{equation}\label{eq:C0-critical-remainder-hyperbolic}
  g(r)=O\bigl(r^{1+k\alpha+\delta}\bigr)
  \qquad\text{as }r\downarrow0.
\end{equation}
Set
\begin{equation}\label{eq:C0-critical-regularity-order-hyperbolic}
  N:=\left\lceil\beta+\delta\right\rceil
\end{equation}
and assume that $g\in C^N([0,\infty))$.
Assume moreover that
\begin{equation}\label{eq:critical-profile-root-convexity}
  \left(f^{\frac{1}{1+k\alpha}}\right)''
  \geq\frac12\tanh\frac r2
  \left(f^{\frac{1}{1+k\alpha}}\right)',
  \qquad r\in(0,\infty).
\end{equation}
Assume also the all-orders scale-invariant bounds
\eqref{eq:profile-symbol-bounds-all-orders}.
Then the solution of \eqref{eq:unnormalized-flow} exists smoothly for
all $t\geq0$, remains strictly $h$-convex, and converges to $o$ as
$t\to\infty$.  Moreover, with the normalization $\lambda$ and the
normalized time $\tau$ defined in
\eqref{eq:lambda-definition}--\eqref{eq:time-and-lambda-identities}
of Section~\ref{subsec:normalized-flow}, there exists a constant
$R_\infty>0$, depending on $M_0$, such that
$\widetilde\rho(\cdot,\tau)$ converges to $R_\infty$ in
$C^\infty(\mathbb S^n)$ as $\tau\to\infty$, exponentially in
$\tau$: for every $\ell\in\mathbb N$ there are constants
$C_\ell,c_\ell>0$ such that
\[
  \|\widetilde\rho(\cdot,\tau)-R_\infty\|_{C^\ell(\mathbb S^n)}
  \leq C_\ell e^{-c_\ell\tau}
  \qquad (\tau\geq0).
\]
In general, $R_\infty$ depends on the initial hypersurface and need
not equal $1$; see Example~\ref{ex:critical-sphere} below.
\end{theorem}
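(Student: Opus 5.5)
The proof of Theorem~\ref{thm:main-critical} will follow the architecture used for the supercritical case, written in the radial-graph and normalized variables of Section~\ref{subsec:normalized-flow}. The new feature is the $C^0$ step, because the limit radius $R_\infty$ is no longer forced to be $1$.

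\emph{Step 1: evolution equations and comparison with spheres.} I write $M_t$ as a radial graph $\rho(\cdot,t)$ over $\mathbb S^n$. Geodesic spheres centred at $o$ are solutions whose radius obeys the ODE
\[
r'=-f(r)\,\bigl(\tbinom nk\coth^k r\bigr)^{\alpha}.
\]
Since $f(r)\sim r^{1+k\alpha}$ and $\coth^k r\sim r^{-k}$, this reduces to $r'\approx -c\,r$ as $r\to0$. The comparison principle, applied against the inscribed and circumscribed spheres, then gives
\[
r_-(t)\le\rho\le r_+(t),
\]
where both $r_\pm$ decay exponentially in $t$. Consequently $M_t$ cannot shrink to $o$ in finite time, and $\rho\to0$ as $t\to\infty$.

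\emph{Step 2: a priori estimates in normalized variables.} I use the normalization $\lambda$ and time $\tau$ from \eqref{eq:lambda-definition}--\eqref{eq:time-and-lambda-identities}. The plan is to establish three bounds for $\widetilde\rho$:
\begin{enumerate}
\item a $C^1$ bound, coming from star-shapedness; this uses that the support function $\langle V,\nu\rangle$ is bounded below via $h$-convexity and the containment of $o$;
\item an upper bound on the speed $F=f\sigma_k^\alpha$, via the maximum principle applied to $F/(\langle V,\nu\rangle-\epsilon)$;
\item preservation of $\kappa_i>1$, together with a quantitative lower bound on the normalized curvatures.
\end{enumerate}
For item 3 I apply the maximum principle to the largest eigenvalue of the inverse Weingarten map $(h^{-1})$, shifted to account for $h$-convexity. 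The structural condition \eqref{eq:critical-profile-root-convexity} is exactly what makes the gradient terms coming from $f(\rho)$ have the right sign. These three bounds give $C^2$ bounds and uniform parabolicity. Krylov--Safonov and Schauder theory then give higher-order bounds, uniform in $\tau$, because of \eqref{eq:profile-symbol-bounds-all-orders}; here the scale invariance of $r^{j-\beta}f^{(j)}$ is what makes the constants independent of $\lambda$.

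\emph{Step 3: convergence and identification of $R_\infty$ (main obstacle).} In the critical case the normalized equation is, to leading order, invariant under rescaling. It therefore has a one-parameter family of stationary spheres, and the $C^0$ bound from Step 1 only pins $\widetilde\rho$ between two fixed constants. To get convergence I would do the following.
\begin{enumerate}
\item Show that the oscillation $\max\widetilde\rho-\min\widetilde\rho$ decays exponentially. Comparing against the sphere solutions started at $\max\rho(\cdot,t_0)$ and $\min\rho(\cdot,t_0)$ at each $t_0$ shows the oscillation is nonincreasing. Strict decay would come from a Harnack/strong-maximum-principle argument on the linearized operator, whose kernel consists only of constants because there is no conformal mode at a fixed centre. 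Alternatively, it would come from an Aleksandrov-reflection-type or Krylov estimate.
\item Show that the normalized flow differs from the exact critical model equation (with $f=\sinh^{1+k\alpha}$ replaced by its homogeneous part) by terms of size $O(\lambda^\delta)=O(e^{-c\tau})$. This uses \eqref{eq:C0-critical-remainder-hyperbolic}, $g\in C^N$, and the fact that the hyperbolic correction terms are also $O(\lambda^2)$.
\item With these two facts, $\partial_\tau\widetilde\rho$ is integrable in $\tau$. Hence $\widetilde\rho$, or equivalently $\max\widetilde\rho$, converges to some $R_\infty>0$, and interpolation with the uniform $C^\ell$ bounds upgrades this to exponential $C^\infty$ convergence.
\end{enumerate}
Item 1 is where I expect the real difficulty, since exponential contraction must hold uniformly despite the neutral direction. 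If the Harnack route fails, I would instead linearize around the family of spheres and use spectral-gap decay for the component orthogonal to constants.
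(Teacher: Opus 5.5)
Your Steps~1--2 and items~2--3 of Step~3 follow the paper's architecture. There is, however, a genuine gap at Step~3, item~1: this is the whole convergence mechanism, and you leave it as a list of possibilities.

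The Harnack/strong-maximum-principle route is not yet an argument. $\widetilde\rho$ enters the normalized equation undifferentiated (through $\gamma_0\widetilde\rho$ and $\lambda^\beta f(\widetilde\rho/\lambda)$). So $\max\widetilde\rho-\min\widetilde\rho$ does not satisfy a linear parabolic inequality without zero-order term. Neither the strong maximum principle nor the fact that the linearization at a sphere has kernel equal to the constants gives a uniform contraction factor per unit time; at best they give strict decrease. To run Krylov--Safonov you would have to exhibit a mean-zero quantity that solves a linear equation with bounded measurable coefficients and a controlled zero-order term. Killing-field derivatives of $\log\tanh(\rho/2)$ would be one candidate, but you do not identify any such quantity. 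The spectral-gap fallback presupposes that $\widetilde\rho$ is already $C^2$-close to a constant, and nothing in your outline puts it there; you would at least need an $\omega$-limit argument identifying every limit as a sphere. As written, you never show that the normalized hypersurfaces approach the family of spheres, let alone exponentially.

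The paper closes this step with a direct maximum principle on the gradient. Put $\varphi=\log\tanh(\rho/2)$, so that $\bar\nabla\varphi=\bar\nabla\rho/\sinh\rho$ and $v^2=1+|\bar\nabla\varphi|_\sigma^2$, and set $W=\frac12|\bar\nabla\varphi|_\sigma^2$. At a spatial maximum, use the Ricci identity on $\mathbb S^n$ and a frame in which the gradient direction is principal. After discarding the Hessian-squared term this gives
\[
  D^+W_{\max}\le-2W_{\max}\,\lambda^{\beta-2}f(\rho)\Bigl(\frac{\partial\sigma_k^\alpha}{\partial\hat\kappa_1}+\coth^2\rho\sum_{a\ge2}\frac{\partial\sigma_k^\alpha}{\partial\hat\kappa_a}\Bigr)-2W_{\max}\,\lambda^{\beta-1}v\,\sigma_k^\alpha(\hat h)\bigl[f'(\rho)-(1+k\alpha)f(\rho)\coth\rho\bigr].
\]
The first term is $\le-cW_{\max}$ for three reasons:
\begin{itemize}
\item $\lambda^{\beta-2}f\coth^2\rho=\lambda^\beta f\,(\lambda^{-1}\coth\rho)^2$ is bounded below by the $C^0$ estimate;
\item $n\ge2$, so the sum over $a\ge2$ is nonempty;
\item the $C^2$ estimate gives $\partial\sigma_k^\alpha/\partial\hat\kappa_a\ge c$.
\end{itemize}
In the critical case, $g=O(r^{1+k\alpha+\delta})$ gives $\lambda^{\beta-1}|f'-(1+k\alpha)f\coth\rho|\le Ce^{-\delta_0\gamma_0\tau}$ with $\delta_0=\min\{\delta,2\}$. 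So the second term is absorbed and $W_{\max}\le Ce^{-c\tau}$. Interpolation and the averaged ODE $|\overline\rho'|\le Ce^{-c\tau}$ then produce $R_\infty$, as in your item~3.

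A smaller point concerns Step~1. Exponential decay of $r_\pm$ alone does not bound $\lambda r_\pm$ from both sides. You need the rate to be exactly $\gamma_0$ up to an integrable error, namely $\int_0^\infty|E(r_\pm(t))|\,dt<\infty$ with $E(r)=\frac{f(r)}{r}\coth^{k\alpha}r-1=O(r^{\delta_0})$. This is precisely where $\delta>0$ is used.
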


\begin{remark}[Normalized versus original
time]\label{rem:normalized-versus-original-time}
The convergence rates stated in
Theorems~\ref{thm:main}--\ref{thm:main-critical} are measured in the
normalized time $\tau$ defined in \eqref{eq:normalized-time} of
Section~\ref{subsec:normalized-flow}.  In both regimes one has
$d\lambda/d\tau=\gamma_0\lambda$ with $\gamma_0>0$; since
$\lambda\to\infty$, the normalized time satisfies $\tau\to\infty$ as
$t\to\infty$, and the two times are related by
$d\tau/dt=\lambda^{k\alpha+1-\beta}$, see \eqref{eq:normalized-time}.
In the critical case $\beta=1+k\alpha$, one has $\tau=t$, so the
convergence is exponential also in the original time.  In the
supercritical case, writing $\mu:=\beta-k\alpha-1>0$, one has
\[
  \tau=\frac{\log(1+\mu\gamma_0t)}{\mu\gamma_0},
  \qquad
  e^{-c\tau}=(1+\mu\gamma_0t)^{-c/(\mu\gamma_0)}.
\]
Thus exponential convergence in normalized time corresponds to an
algebraic rate in the original time in the supercritical regime.
\end{remark}

\begin{example}[The critical limit need not equal
$1$]\label{ex:critical-sphere}
Take $k\alpha=1$ and the critical weight $f(r)=\sinh^2r$
(i.e.~$\beta=2=1+k\alpha$).  For a centred geodesic sphere of radius
$r(t)$, one has $\sigma_k^\alpha=\gamma_0\coth r$ with $\gamma_0$
as in \eqref{eq:lambda-definition}, and \eqref{eq:unnormalized-flow}
reduces to
\[
  r'(t)=-\gamma_0\sinh r(t)\cosh r(t),
\]
hence $\tanh r(t)=e^{-\gamma_0t}\tanh r(0)$.  Since
$\lambda(t)=e^{\gamma_0t}$ in the critical case and
$r(t)\sim\tanh r(t)$ as $t\to\infty$, the normalized radial
function satisfies
\[
  \widetilde\rho(t)=\lambda(t)r(t)\longrightarrow\tanh r(0)
  \qquad (t\to\infty),
\]
so $R_\infty=\tanh r(0)<1$.  Thus the limit in
Theorem~\ref{thm:main-critical} genuinely depends on the initial
hypersurface; the same computation with the supercritical weight
$f(r)=\sinh^\beta r$, $\beta>2$, gives $\widetilde\rho(t)\to1$,
consistently with Theorem~\ref{thm:main}.
\end{example}

\begin{remark}[A family of admissible radial weights]
\label{rem:admissible-hyperbolic-profile}
Both the supercritical and critical theorems admit a broad family of
explicit radial weights.  Set
\[
  p:=1+k\alpha,
  \qquad
  a:=\frac{\beta}{p}\geq1,
\]
and let $h\in C^\infty([0,\infty))$ satisfy
\begin{equation}\label{eq:admissible-weight-hypotheses}
  h(0)=h'(0)=0,
  \qquad
  h'(r)\geq0,
  \qquad
  h''(r)\geq0.
\end{equation}
Define
\begin{equation}\label{eq:admissible-hyperbolic-weight-family}
  q_h(r):=e^{p h(r)},
  \qquad
  f_{\beta,h}(r):=\sinh^\beta r\,q_h(r).
\end{equation}
If $w:=f_{\beta,h}^{1/p}=\sinh^a r\,e^{h(r)}$, then $w'>0$ and a
direct computation gives
\begin{align}\label{eq:admissible-weight-structural-check}
  \frac{w''-\tanh r\,w'}{w}
  ={}&a(a-1)\coth^2r+h''(r)\notag\\
     &+\bigl(2a\coth r-\tanh r\bigr)h'(r)
       +h'(r)^2\geq0.
\end{align}
Since $\tanh r>\frac12\tanh(r/2)$ for $r>0$, it follows that
\[
  \left(f_{\beta,h}^{1/p}\right)''
  >\frac12\tanh\frac r2
    \left(f_{\beta,h}^{1/p}\right)',
\]
so the structural condition in
\eqref{eq:main-profile-root-convexity} is satisfied.

Moreover, \eqref{eq:admissible-weight-hypotheses} implies
$q_h(r)=1+O(r^2)$ as $r\downarrow0$.  Therefore, for the decomposition
used in Theorems~\ref{thm:main} and~\ref{thm:main-critical},
\[
  g(r)=f_{\beta,h}(r)-\sinh^\beta r
      =O(r^{\beta+2}).
\]
If $\beta>p$, then $g\in C^{\lfloor\beta\rfloor+1}([0,\infty))$ and
$g^{(j)}(0)=0$ for $1\leq j\leq\lfloor\beta\rfloor$.  If $\beta=p$,
the critical assumptions hold with $\delta=1$.  Standard
differentiation near the origin also shows that
\eqref{eq:profile-symbol-bounds-all-orders} holds for every member of
this family.

For example, taking nonnegative constants $d_0,d_1,d_2$
gives the admissible factors
\[
  q_h(r)=\cosh^{d_0}r
  \exp\!\bigl(d_1r^2+d_2(\cosh r-1)\bigr).
\]
Thus $\sinh^\beta r$, $\sinh^\beta r\cosh^\gamma r$,
$\sinh^\beta r e^{\gamma r^2}$, and
$\sinh^\beta r e^{\gamma(\cosh r-1)}$, with $\gamma\geq0$, are all
covered by the two theorems.
\end{remark}

\subsection{Outline of the proof}

The normalization is performed at the level of the radial function in
normal coordinates at $o$; there is no ambient homothety in hyperbolic
space.  After recording the radial graph geometry and the normalized
evolution equations, we prove uniform $C^0$ and $C^1$ estimates and a
positive lower bound for the normalized speed.  The central step is the
$C^2$ estimate.  We apply the maximum principle, in the barrier sense,
to the largest eigenvalue of the inverse normalized Weingarten map.
Inverse concavity controls the gradient terms, while the supporting-horosphere
inequality and the structural condition on the radial weight preserve
strict
$h$-convexity directly from the initial time, without an auxiliary
initial layer or a prescribed lower bound for the initial $h$-convexity
margin.  The speed upper bound and the complete curvature
bound then follow.  Finally, a maximum-principle argument gives
exponential decay of the normalized gradient; Evans--Krylov theory,
Schauder estimates, and interpolation yield smooth exponential
convergence to a centred geodesic sphere.

\subsection*{Notation}

We write $\Phi=f(\rho)\sigma_k^\alpha(h)$ and
$F=\sigma_k^\alpha$.  The normalized radial function, Weingarten map,
speed, and support function are denoted by $\widetilde\rho$, $\hat h$,
$\widetilde\Phi$, and $\widetilde u$, respectively.  Whenever
$\hat h$ is positive definite, its inverse is denoted by $\check h$.
For a strictly $h$-convex initial hypersurface $M_0$ we write
\[
  \varepsilon_h(M_0)
  :=\min_{x\in M_0,\,1\leq i\leq n}\bigl(\kappa_i(x,0)-1\bigr)>0,
\]
so that $\kappa_i(\cdot,0)\geq1+\varepsilon_h(M_0)$ on $M_0$.
Constants $c,C$ may change from line to line and, unless stated
otherwise, depend only on $n,k,\alpha$, the fixed base point $o$, the
exponent $\beta$, the initial radial bounds, the initial normalized
speed bounds whenever they are used, and the stated bounds for
$f,f'$, and $f''$ on the relevant radial interval.  Higher-order
constants may additionally depend on the corresponding higher norms
of the initial hypersurface and on the quantities $\mathcal P_j(R_0)$
in \eqref{eq:profile-symbol-bounds-all-orders}.  Statements of
independence from $\varepsilon_h(M_0)^{-1}$ are understood uniformly
over families for which all these listed data remain controlled; any
dependence on $\tau_{\max}$ will be stated explicitly.

We also fix a convention for time derivatives of spatial extrema.  If
$u$ is a smooth function on $\mathbb S^n\times I$ for a compact
smooth time interval $I$, then the extremal functions
$u_{\max}(\tau):=\max_{\mathbb S^n}u(\cdot,\tau)$ and
$u_{\min}(\tau):=\min_{\mathbb S^n}u(\cdot,\tau)$ are locally
Lipschitz on $I$, and hence differentiable almost everywhere.  Their
upper Dini derivatives satisfy
\[
  D^+u_{\max}(\tau)
  \leq\max_{\theta\in\operatorname{Argmax}u(\cdot,\tau)}
      \partial_\tau u(\theta,\tau),
  \qquad
  D^+u_{\min}(\tau)
  \geq\min_{\theta\in\operatorname{Argmin}u(\cdot,\tau)}
      \partial_\tau u(\theta,\tau).
\]
Throughout this paper, every differential inequality involving the
time derivative of a spatial maximum or minimum is understood in this
sense---equivalently, at every differentiability time of the extremal
function---and integrates to the corresponding estimate in the usual
way.

\section{Preliminaries}
\subsection{Radial graphs in
\texorpdfstring{$\mathbb{H}^{n+1}$}{hyperbolic space}}
We first record some basic geometric quantities of a radial graph in
$\mathbb H^{n+1}$. Let
$X(x)=(\rho(x),x)$, $x\in\mathbb S^n$, where
$\rho:\mathbb S^n\to(0,\infty)$ is smooth. All derivatives and
contractions below are taken with respect to $\sigma_{\mathbb S^n}$.

The induced metric and its inverse are
\begin{align}
  g_{ij}
  &=\rho_i\rho_j+\sinh^2\rho\,\sigma_{ij},
    \label{eq:induced-metric}\\
  g^{ij}
  &=\frac{1}{\sinh^2\rho}
    \left(
      \sigma^{ij}
      -\frac{\rho^i\rho^j}{\sinh^2\rho\,v^2}
     \right).
     \label{eq:inverse-induced-metric}
\end{align}
The outward unit normal is
\begin{equation}\label{eq:radial-unit-normal}
  \nu=\frac{1}{v}
  \left(
    \partial_\rho-\frac{\rho^i}{\sinh^2\rho}\partial_i
  \right),
\end{equation}
where
\begin{equation}\label{eq:radial-graph-factor}
  v=\left(1+\frac{|\nabla\rho|^2}{\sinh^2\rho}\right)^{1/2}.
\end{equation}
With the convention $h_{ij}=-\langle\vec h_{ij},\nu\rangle$, the second
fundamental form is
\begin{equation}\label{eq:second-fundamental-form-rho}
  h_{ij}=\frac{1}{v}
  \left(
    -\rho_{ij}
    +\sinh\rho\cosh\rho\,\sigma_{ij}
    +2\coth\rho\,\rho_i\rho_j
  \right).
\end{equation}
The Weingarten map $h_i{}^j=g^{jk}h_{ik}$ is
\begin{equation}\label{eq:weingarten-map-rho}
  h_i{}^j
  =\frac{1}{v\sinh^2\rho}
  \left(
    -\rho_i{}^j
    +\sinh\rho\cosh\rho\,\delta_i{}^j
    +\frac{\rho^j\rho^k\rho_{ki}}
           {\sinh^2\rho\,v^2}
    +\frac{\coth\rho}{v^2}\rho_i\rho^j
  \right).
\end{equation}
The hyperbolic support function is
\begin{equation}\label{eq:radial-support-function}
  u=\langle\sinh\rho\,\partial_\rho,\nu\rangle
   =\frac{\sinh\rho}{v}.
\end{equation}

\subsection{The radial support estimate for
\texorpdfstring{$h$}{h}-convex hypersurfaces}
\label{subsec:supporting-horospheres}

For a smooth domain, the global supporting-horoball definition of
$h$-convexity is equivalent to the local condition $\kappa_i\geq1$;
see \cite[p.~1185]{AW18}.  The only consequence needed below is the
following sharp pointwise estimate.

\begin{lemma}[Radial support estimate]
\label{lem:global-supporting-horosphere}
Let $M\subset\mathbb H^{n+1}$ be a smooth, closed, connected, embedded
hypersurface bounding a domain $\Omega$, and let $\nu$ be its outward
unit normal. Suppose that
\[
  \kappa_i\geq1,\qquad i=1,\ldots,n,
\]
and that $o\in\operatorname{int}\Omega$.  Let
\[
  \rho:\mathbb H^{n+1}\longrightarrow[0,\infty),\qquad
  \rho(Y):=d_{\mathbb H^{n+1}}(o,Y),
\]
be the ambient distance function from $o$, and let
$\partial_\rho:=\bar\nabla\rho$ denote its gradient vector field on
$\mathbb H^{n+1}\setminus\{o\}$.  Then, on $M$,
\begin{equation}\label{eq:h-convex-radial-support-estimate}
  \langle\partial_\rho,\nu\rangle>\tanh\frac{\rho}{2}.
\end{equation}
\end{lemma}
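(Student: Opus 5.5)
We will prove \eqref{eq:h-convex-radial-support-estimate} by comparing $M$ at each point with a supporting horosphere and then computing the corresponding quantity for a horoball explicitly. Fix $p\in M$ and set $\rho=\rho(p)>0$; note $p\neq o$ because $o\in\operatorname{int}\Omega$. By the equivalence recalled before the lemma (\cite[p.~1185]{AW18}), the local condition $\kappa_i\geq1$ implies that $\Omega$ is contained in a closed horoball $B$ whose boundary horosphere is tangent to $M$ at $p$ and has the same outward unit normal $\nu(p)$ there. Since $o\in\operatorname{int}\Omega\subset\operatorname{int}B$, the point $o$ lies strictly inside $B$. This strict inclusion is what will produce the strict inequality.

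To encode $B$, we work in the hyperboloid model $\mathbb H^{n+1}=\{x:\langle x,x\rangle=-1,\ x_0>0\}$ with $o=(1,0)$. A horoball has the form $B=\{x:-\langle x,\ell\rangle\leq1\}$ for a future null vector $\ell=(\ell_0,\bar\ell)$ with $|\bar\ell|=\ell_0>0$, normalized so that $p\in\partial B$. Its Busemann function is $b(x)=\log(-\langle x,\ell\rangle)$, which satisfies $|\bar\nabla b|=1$, $B=\{b\leq0\}$, and $\bar\nabla b(p)=\nu(p)$, the outward normal of $B$ at $p$ and hence of $M$. Thus $\langle\partial_\rho,\nu\rangle(p)$ is the derivative of $b$ along the unit-speed geodesic from $o$ to $p$, evaluated at $s=\rho$.

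Write that geodesic as $\gamma(s)=\cosh s\,o+\sinh s\,w$ with $w$ a unit vector orthogonal to $o$. Then
\[
  -\langle\gamma(s),\ell\rangle=A\cosh s-B_1\sinh s,\qquad A:=\ell_0,\quad B_1:=w\cdot\bar\ell .
\]
The two conditions on $B$ become the following.
\begin{itemize}
\item[(i)] Since $p\in\partial B$, we have $A\cosh\rho-B_1\sinh\rho=1$.
\item[(ii)] Since $o\in\operatorname{int}B$, we have $b(o)=\log A<0$, i.e.\ $0<A<1$.
\end{itemize}
Differentiating and using (i),
\[
  \langle\partial_\rho,\nu\rangle(p)=\frac{d}{ds}\Big|_{s=\rho}b(\gamma(s))
  =A\sinh\rho-B_1\cosh\rho .
\]
Solving (i) for $B_1=(A\cosh\rho-1)/\sinh\rho$ and substituting gives
\[
  \langle\partial_\rho,\nu\rangle(p)=\frac{\cosh\rho-A}{\sinh\rho}
  >\frac{\cosh\rho-1}{\sinh\rho}=\tanh\frac\rho2,
\]
where the inequality uses (ii).

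The main point requiring care is the first step: invoking the global supporting-horoball characterization with the correct matching of normals. One must check that the horoball lies on the same side as $\Omega$ and that its outward normal at $p$ coincides with $\nu(p)$. One must also check that $o$ lies in the interior of $B$, not merely in $B$, since the strict inequality depends on $A<1$. Both follow from $\Omega\subset B$, tangency at $p$, and $o\in\operatorname{int}\Omega$. Everything after that is an explicit computation in the hyperboloid model.
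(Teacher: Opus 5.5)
Your argument is correct. It follows the paper's overall strategy: take the supporting horoball at the point, use that $o$ lies in its open interior, and read off the angle between $\partial_\rho$ and $\nu$. The computation, however, is carried out differently.

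The paper stays coordinate-free. It realizes the horoball as the increasing union of geodesic balls $B(c(t),t)$ along the inward normal geodesic, and then invokes uniqueness of the horoball with given point and outward normal to identify this union with the supporting horoball. It picks one finite $t$ with $d(o,c(t))<t$ and applies the hyperbolic law of cosines to the triangle $o,X,c(t)$, which gives $\langle\partial_\rho,\nu\rangle>\tanh(\rho/2)\coth t$.

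You instead represent the supporting horoball directly in the hyperboloid model by a null vector and use its Busemann function. This yields the exact identity $\langle\partial_\rho,\nu\rangle=(\cosh\rho-A)/\sinh\rho$ with $A=e^{b(o)}$. Strictness is then visibly equivalent to $b(o)<0$, i.e.\ to $o$ lying in the open horoball.

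The two computations are the same identity in different guises. Divide the paper's law-of-cosines relation by $e^t/2$ and let $t\to\infty$ (the limit exists because $t-d(o,c(t))$ is nondecreasing and bounded by $\rho$). This recovers your formula with $A=\lim_{t\to\infty}e^{d(o,c(t))-t}$, which is precisely $e^{b(o)}$ for the Busemann function vanishing at the base point.

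What each route buys:
\begin{itemize}
\item Yours gives an exact formula. It also avoids both the limiting family of balls and the uniqueness step, at the cost of using the standard model description of horoballs as $\{-\langle x,\ell\rangle\leq c\}$.
\item The paper's needs no model, only the triangle inequality and the law of cosines.
\end{itemize}

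Both arguments rely in the same way on the supporting-horoball characterization of $\kappa_i\geq1$. Both also need the horoball's outward normal at the contact point to match $\nu$. You correctly flag this, and it follows from $\Omega\subset B$ with $p\in\partial\Omega\cap\partial B$.
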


\begin{proof}
Fix $X\in M$.  Since $o\in\operatorname{int}\Omega$ and
$X\in\partial\Omega$, we have $\rho=\rho(X)>0$.  Let
$c:[0,\infty)\to\mathbb H^{n+1}$ be the unit-speed geodesic with
$c(0)=X$ and $c'(0)=-\nu$, and consider the geodesic balls
$B_t:=B(c(t),t)$.  They are nested increasing in $t$: if $t_2>t_1$
and $Y\in B_{t_1}$, then the triangle inequality gives
\[
  d(Y,c(t_2))
  \leq d(Y,c(t_1))+d\bigl(c(t_1),c(t_2)\bigr)
  \leq t_1+(t_2-t_1)=t_2.
\]
Each sphere $\partial B_t$ passes through $X$ and is tangent to $M$
there with outward normal $\nu$: the unit-speed geodesic from the
centre $c(t)$ to $X$ is the reversed curve $s\mapsto c(t-s)$, whose
velocity at $X$ is $-c'(0)=\nu$.  Hence
\[
  H_X:=\bigcup_{t>0}B_t
\]
is the open horoball through $X$ with outward normal $\nu$; this is
the intrinsic description of a horoball as the increasing union of
geodesic balls along a geodesic ray.  Since a horoball through a
given point with a given outward normal is unique, $H_X$ is the
supporting horoball at $X$ provided by the supporting-horoball
property \cite[p.~1185]{AW18}, and therefore
\[
  \Omega\subset\overline{H_X}.
\]
Because $o\in\operatorname{int}\Omega$, the point $o$ lies in the
open horoball $H_X$, so there exists $t>0$ such that
\begin{equation}\label{eq:origin-inside-supporting-horoball}
  d\bigl(o,c(t)\bigr)<t.
\end{equation}

Now apply the hyperbolic law of cosines to the triangle with
vertices $o$, $X$, and $c(t)$.  The unit-speed geodesic from $X$ to
$o$ has initial velocity $-\partial_\rho|_X$, and the geodesic from
$X$ to $c(t)$ has initial velocity $-\nu$; the cosine of the angle
at $X$ is therefore $\langle\partial_\rho,\nu\rangle$.  The three
side lengths are $\rho$, $t$, and $d(o,c(t))$, so
\[
  \cosh d\bigl(o,c(t)\bigr)
  =\cosh\rho\,\cosh t
   -\sinh\rho\,\sinh t\,\langle\partial_\rho,\nu\rangle.
\]
Since $d(o,c(t))<t$ and $\cosh$ is strictly increasing on
$[0,\infty)$, it follows that
\[
  \cosh\rho\,\cosh t
  -\sinh\rho\,\sinh t\,\langle\partial_\rho,\nu\rangle
  <\cosh t,
\]
and hence
\[
  \langle\partial_\rho,\nu\rangle
  >\frac{\cosh\rho-1}{\sinh\rho}\,\coth t
  =\tanh\frac{\rho}{2}\,\coth t
  >\tanh\frac{\rho}{2}.
\]
This proves \eqref{eq:h-convex-radial-support-estimate}.
\end{proof}
\subsection{Inverse concavity of
\texorpdfstring{$\sigma_k^{1/k}$}{sigma-k to the power 1/k}}
\label{subsec:sigma-k-root-inverse-concavity}

We record the algebraic form of inverse concavity that will be used in
the curvature estimate; see Li--Sheng--Wang
\cite[Lemma~3.6]{LSW20}.

\begin{lemma}[Inverse concavity of $\sigma_k^{1/k}$]
\label{lem:sigma-k-root-inverse-concavity}
Let $1\leq k\leq n$ and set
\[
  G(A):=\sigma_k^{1/k}(A),
  \qquad A\in\operatorname{Sym}^+(n).
\]
Then the dual function
\[
  G_*(A):=G(A^{-1})^{-1}
  =\left(\frac{\sigma_n(A)}{\sigma_{n-k}(A)}\right)^{1/k},
\]
is concave on $\operatorname{Sym}^+(n)$.  Equivalently, if
$B=(b^{ij})=A^{-1}$, then, for every symmetric matrix
$\eta=(\eta_{ij})$,
\begin{equation}\label{eq:sigma-k-root-inverse-concavity}
  \ddot G^{ij,kl}\eta_{ij}\eta_{kl}
  +2\dot G^{ij}b^{pq}\eta_{ip}\eta_{jq}
  \geq
  2G^{-1}\bigl(\dot G^{ij}\eta_{ij}\bigr)^2
\end{equation}
where the quantities $G$, $\dot G$, and $\ddot G$ are all evaluated at $A$.
\end{lemma}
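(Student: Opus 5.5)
The plan is to prove concavity of $G_*$ first, at the level of eigenvalues, and then to obtain \eqref{eq:sigma-k-root-inverse-concavity} by differentiating $G_*$ twice along a line and choosing the direction appropriately.

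\emph{Step 1: the eigenvalue function.} For $A\in\operatorname{Sym}^+(n)$ with eigenvalues $\lambda=(\lambda_1,\dots,\lambda_n)$, $\lambda_i>0$, the matrix $A^{-1}$ has eigenvalues $\lambda_i^{-1}$. The identity $\sigma_k(\lambda^{-1})=\sigma_{n-k}(\lambda)/\sigma_n(\lambda)$ then gives
\[
  G_*(A)=g_*(\lambda):=\left(\frac{\sigma_n(\lambda)}{\sigma_{n-k}(\lambda)}\right)^{1/k}.
\]
By the classical Maclaurin--Newton theory of quotients, $(\sigma_l/\sigma_{l'})^{1/(l-l')}$ is concave on $\Gamma_l^+$ for $0\le l'<l\le n$. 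Taking $l=n$ and $l'=n-k$ shows that $g_*$ is concave on the positive cone $\Gamma_n^+$. Since $g_*$ is also symmetric, the standard transfer principle for spectral functions (Davis, Ball; also used in \cite{And07}) shows that $G_*$ is concave on $\operatorname{Sym}^+(n)$. This is the only substantive input, and I take it from the literature rather than reproving it; it is exactly the content quoted from \cite[Lemma~3.6]{LSW20}.

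\emph{Step 2: differentiating along a line.} Fix $C\in\operatorname{Sym}^+(n)$ and a symmetric $\zeta$, and put $C(s)=C+s\zeta$ and $\varphi(s)=G(C(s)^{-1})$, so that $G_*(C(s))=1/\varphi(s)$. Write $D=C^{-1}$. Using $\tfrac{d}{ds}C^{-1}=-C^{-1}\zeta C^{-1}$, a direct computation gives
\[
  \varphi'(0)=-\dot G(D\zeta D),\qquad
  \varphi''(0)=\ddot G(D\zeta D,D\zeta D)+2\dot G(D\zeta D\zeta D),
\]
where $G,\dot G,\ddot G$ are evaluated at $D$. We also have
\[
  (1/\varphi)''=-\frac{\varphi''}{\varphi^2}+\frac{2\varphi'^2}{\varphi^3}.
\]
Hence $(1/\varphi)''(0)\le0$ holds if and only if $\varphi''(0)\ge 2\varphi'(0)^2/\varphi(0)$.

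\emph{Step 3: choosing the point and direction.} Given $A$ and $\eta$ as in the statement, apply Step 2 with $C=A^{-1}=B$, so that $D=A$, and with direction $\zeta=B\eta B$, so that $D\zeta D=\eta$. Then $D\zeta D\zeta D=\eta B\eta$, whose contraction with $\dot G$ is $\dot G^{ij}b^{pq}\eta_{ip}\eta_{qj}$. The inequality $\varphi''(0)\ge 2\varphi'(0)^2/\varphi(0)$ therefore becomes precisely \eqref{eq:sigma-k-root-inverse-concavity}, with everything evaluated at $A$.

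For the converse, the map $\eta\mapsto B\eta B$ is a bijection of symmetric matrices and $A\mapsto A^{-1}$ is a bijection of $\operatorname{Sym}^+(n)$. So \eqref{eq:sigma-k-root-inverse-concavity} holding for all $A$ and $\eta$ is equivalent to nonpositivity of every second directional derivative of $G_*$, that is, to concavity of $G_*$. This establishes the stated equivalence.

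The main obstacle is Step 1, namely the concavity of $(\sigma_n/\sigma_{n-k})^{1/k}$ and its passage to matrices. I would cite it rather than reprove it; the remaining steps are bookkeeping of the inverse-matrix derivatives.
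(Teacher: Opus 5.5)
Your argument is correct. The paper gives no proof of its own and simply quotes \cite[Lemma~3.6]{LSW20}. Your reduction supplies the standard content behind that citation: you take concavity of $(\sigma_n/\sigma_{n-k})^{1/k}$ on the positive cone, transfer it to $\operatorname{Sym}^+(n)$, and then compute the second derivative of $1/\varphi$ along $B+s\,B\eta B$. The resulting identity $\varphi''(0)=\ddot G(\eta,\eta)+2\dot G(\eta B\eta)$, together with the bijectivity of $\eta\mapsto B\eta B$, correctly gives both directions of the ``equivalently'' clause, which the paper leaves implicit.
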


\subsection{The normalized flow}\label{subsec:normalized-flow}
We first explain why no separate star-shapedness hypothesis is needed.
A smooth, closed, strictly $h$-convex hypersurface in
$\mathbb H^{n+1}$ enclosing $o$ is automatically a radial graph over
$\mathbb S^n$ centred at $o$.  Indeed, the hyperbolic
Hadamard--Stoker theorem~\cite{Alex77} shows that such a hypersurface is
embedded and bounds a strictly convex body
$\Omega\Subset\mathbb H^{n+1}$.  If a geodesic ray from $o$ met $\partial\Omega$ in more than one
point, or tangentially, the totally geodesic supporting hyperplane at
the last intersection point would separate $o$ from $\Omega$, contrary
to $o\in\operatorname{int}\Omega$.  Thus $M_t$ is star-shaped with
respect to $o$ as long as it remains strictly $h$-convex and encloses $o$.
Hence, after a time-dependent tangential
reparametrization, $M_t$ can be written as
\begin{equation}\label{eq:time-dependent-radial-graph}
  X(\theta,t)=\exp_o\bigl(\rho(\theta,t)\theta\bigr),
  \qquad \theta\in\mathbb S^n.
\end{equation}
For this parametrization,
\begin{equation}\label{eq:radial-velocity-normal-component}
  \left\langle\partial_tX,\nu\right\rangle
  =\rho_t\left\langle\partial_\rho,\nu\right\rangle
  =\frac{\rho_t}{v}.
\end{equation}
The tangential reparametrization does not change the normal velocity.
Comparing \eqref{eq:radial-velocity-normal-component} with
\eqref{eq:unnormalized-flow}, the geometric flow is equivalent to the
scalar equation on $\mathbb S^n$
\begin{equation}\label{eq:radial-flow}
  \begin{cases}
    \displaystyle
    \partial_t\rho
    =-f(\rho)\sigma_k^\alpha\bigl(h[\rho]\bigr)v,
       &\text{on }\mathbb S^n\times[0,T_{\max}),\\[1mm]
    \rho(\cdot,0)=\rho_0.
  \end{cases}
\end{equation}
Here $h[\rho]$ is the Weingarten map in
\eqref{eq:weingarten-map-rho}, and $T_{\max}\in(0,\infty]$ denotes
the maximal existence time of the smooth solution.  Equation
\eqref{eq:radial-flow} is
parabolic on the admissible branch $h[\rho]\in\Gamma_k^+$.

Hyperbolic space has no ambient homothety.  Consequently, the
normalization is defined by blowing up the radial function in normal
coordinates at $o$, rather than by multiplying the embedding $X$.
Define
\begin{equation}\label{eq:lambda-definition}
  \lambda(t)=
  \begin{cases}
    e^{\gamma_0 t},&\beta=1+k\alpha,\\[1mm]
    \bigl(1+(\beta-k\alpha-1)\gamma_0 t\bigr)^%
      {1/(\beta-k\alpha-1)},&\beta>1+k\alpha,
  \end{cases}
\end{equation}
where $\gamma_0 = \binom nk^\alpha$ so that
\begin{equation}\label{eq:lambda-ode}
  \lambda'=\gamma_0\lambda^{2+k\alpha-\beta}.
\end{equation}
Introduce the normalized time
\begin{equation}\label{eq:normalized-time}
  \tau=
  \begin{cases}
    t,&\beta=1+k\alpha,\\[1mm]
    \displaystyle
    \frac{\log\bigl(1+(\beta-k\alpha-1)\gamma_0 t\bigr)}
    {(\beta-k\alpha-1)\gamma_0},&\beta>1+k\alpha.
  \end{cases}
\end{equation}
Equivalently,
\begin{equation}\label{eq:time-and-lambda-identities}
  \frac{d\tau}{dt}=\lambda^{1+k\alpha-\beta},
  \qquad
  \lambda=e^{\gamma_0\tau}.
\end{equation}
The normalized radial function and the corresponding auxiliary radial graph are
\begin{equation}\label{eq:normalized-radius}
  \widetilde\rho(\theta,\tau)=\lambda(t(\tau))\rho(\theta,t(\tau)),
  \qquad
  \widetilde M_\tau
  =\bigl\{\exp_o(\widetilde\rho(\theta,\tau)\theta):
      \theta\in\mathbb S^n\bigr\}.
\end{equation}
In what follows, we use only the normalized radial function
$\widetilde\rho$ and do not consider the geometry of the auxiliary graph
$\widetilde M_\tau$, since the curvatures of $\widetilde M_\tau$ and
$M_{t(\tau)}$ are not related by scaling.  

Differentiating $\widetilde\rho=\lambda\rho$ with respect to $t$ and
using \eqref{eq:radial-flow}, we obtain
\begin{equation}\label{eq:t-derivative-before-curvature-scaling}
  \begin{aligned}
  \partial_t\widetilde\rho
  &=(\partial_t\lambda)\rho+\lambda\partial_t\rho\\
  &=\gamma_0\lambda^{2+k\alpha-\beta}\rho
    -\lambda f
      \left(\frac{\widetilde\rho}{\lambda}\right)
      \sigma_k^\alpha
      \left(h\left[\frac{\widetilde\rho}{\lambda}\right]\right)v\\
  &=\gamma_0\lambda^{1+k\alpha-\beta}\widetilde\rho
    -\lambda f
      \left(\frac{\widetilde\rho}{\lambda}\right)
       \sigma_k^\alpha
       \left(h\left[\frac{\widetilde\rho}{\lambda}\right]\right)
       \left(
         1+\frac{|\nabla\widetilde\rho|^2}
           {\lambda^2\sinh^2(\widetilde\rho/\lambda)}
      \right)^{1/2}.
  \end{aligned}
\end{equation}
Combining \eqref{eq:radial-flow},
\eqref{eq:time-and-lambda-identities}, and \eqref{eq:t-derivative-before-curvature-scaling},
we obtain
\begin{equation}\label{eq:normalized-radial-flow-exact}
  \begin{aligned}
  \partial_\tau\widetilde\rho
  &=\frac{dt}{d\tau}\partial_t\widetilde\rho
   =\lambda^{\beta-1-k\alpha}\partial_t\widetilde\rho\\
  &=\gamma_0\widetilde\rho
   -\lambda^{\beta-k\alpha} f
       \left(\frac{\widetilde\rho}{\lambda}\right)
     \sigma_k^\alpha
       \left(h\left[\frac{\widetilde\rho}{\lambda}\right]\right)
     \left(
       1+\frac{|\nabla\widetilde\rho|^2}
         {\lambda^2\sinh^2(\widetilde\rho/\lambda)}
    \right)^{1/2}.
  \end{aligned}
\end{equation}

Define
\begin{equation}\label{eq:normalized-weingarten-map-definition}
  \hat h_i{}^j:=\lambda^{-1}h_i{}^j.
\end{equation}
Thus $\hat h_i{}^j$ is the rescaled Weingarten map of
$M_{t(\tau)}$; it is not the Weingarten map of the auxiliary graph
$\widetilde M_\tau$.  By the homogeneity of $\sigma_k$,
\[
  \sigma_k^\alpha
  \left(h\left[\frac{\widetilde\rho}{\lambda}\right]\right)
  =\lambda^{k\alpha}\sigma_k^\alpha(\hat h).
\]
Consequently, the normalized radial function satisfies
\begin{equation}\label{eq:normalized-radial-flow-hat-h}
  \begin{aligned}
  \partial_\tau\widetilde\rho
  ={}&\gamma_0\widetilde\rho
   -\lambda^\beta f
      \left(\frac{\widetilde\rho}{\lambda}\right)
     \sigma_k^\alpha(\hat h)
     \left(
       1+\frac{|\nabla\widetilde\rho|^2}
         {\lambda^2\sinh^2(\widetilde\rho/\lambda)}
     \right)^{1/2}.
  \end{aligned}
\end{equation}
\subsection{The evolution equations}
In this subsection, we record the basic evolution equations for the
normalized Weingarten map, the normalized speed, and the inverse
normalized Weingarten map. Since the auxiliary graph $\widetilde M_\tau$ is not a
geometric rescaling of the original hypersurface, all geometric
quantities below are computed on the unnormalized flow $M_{t(\tau)}$.

All geometric evolution identities in this subsection are computed
in the original normal parameterization of the flow.  In contrast,
the scalar radial equation \eqref{eq:radial-flow} is written at fixed
angular coordinates.  The corresponding radial time derivatives are
$-\Phi/v$ and $-\Phi v$, respectively.  Indeed, in the radial
parameterization,
\[
  \partial_tX=-\Phi\nu+Z,
  \qquad
  Z=-\Phi v\nabla^M\rho
\]
is tangential.  Thus scalar geometric evolution equations acquire a
tangential transport term when pulled back to fixed angular
coordinates; this term vanishes at spatial extrema of the scalar
quantity under consideration.

For later use, let $z$ be an independent scalar argument held fixed
under $\partial_\tau$.  Since $d\lambda/d\tau=\gamma_0\lambda$ by
\eqref{eq:time-and-lambda-identities}, one has
\begin{equation}\label{eq:lambda-f-tau-derivative}
  \partial_\tau\bigl(\lambda^\beta f(z/\lambda)\bigr)
  =\gamma_0\lambda^\beta\bigl(\beta f(r)-rf'(r)\bigr),
  \qquad r=z/\lambda,
\end{equation}
valid in both regimes.

\begin{lemma}[Evolution of the radial distance]
\label{lem:unnormalized-radial-distance-evolution-hyperbolic}
Let $\bar\rho$ be the ambient radial distance from the origin and set
\[
  \rho(p,t):=\bar\rho(X(p,t)).
\]
Along the unnormalized flow~\eqref{eq:unnormalized-flow}, the radial
distance satisfies
\begin{equation}\label{eq:unnormalized-radial-distance-evolution-hyperbolic}
  \partial_t\rho=-\frac{\Phi}{v}.
\end{equation}
\end{lemma}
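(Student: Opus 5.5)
The plan is a direct chain-rule computation in the original normal parameterization of the flow. Since $\rho(p,t)=\bar\rho(X(p,t))$ and $\bar\rho$ is smooth on $\mathbb H^{n+1}\setminus\{o\}$, we first note that $X(p,t)\neq o$ for all $t<T_{\max}$. This holds because $M_t$ encloses $o$, so $\rho>0$ on $M_t$. We then differentiate in $t$ with $p$ fixed:
\[
  \partial_t\rho=\bigl\langle\bar\nabla\bar\rho,\partial_tX\bigr\rangle
  =\bigl\langle\partial_\rho,-\Phi\nu\bigr\rangle
  =-\Phi\,\langle\partial_\rho,\nu\rangle .
\]
Here $\partial_\rho=\bar\nabla\bar\rho$ is the unit radial field, exactly as in Lemma~\ref{lem:global-supporting-horosphere}. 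The flow \eqref{eq:unnormalized-flow} gives $\partial_tX=-\Phi\nu$ with $\Phi=f(\rho)\sigma_k^\alpha$.

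The remaining step is to identify $\langle\partial_\rho,\nu\rangle=1/v$. We express $M_t$ locally as the radial graph \eqref{eq:time-dependent-radial-graph}; this is legitimate by the star-shapedness discussion in Subsection~\ref{subsec:normalized-flow}. Formula \eqref{eq:radial-unit-normal} then gives
\[
  \nu=\frac1v\Bigl(\partial_\rho-\frac{\rho^i}{\sinh^2\rho}\partial_i\Bigr).
\]
Because $\partial_\rho$ is orthogonal to the coordinate fields $\partial_i$ on $\mathbb S^n$ in the metric \eqref{eq:hyperbolic-metric}, and $|\partial_\rho|=1$, we get $\langle\partial_\rho,\nu\rangle=1/v$. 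The identity involves only the pointwise geometry of $M_t$ at $X(p,t)$ and not the parametrization, so it holds in the normal parameterization as well. Substituting yields \eqref{eq:unnormalized-radial-distance-evolution-hyperbolic}.

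There is no real obstacle; the one point to handle carefully is the distinction between parameterizations. In the normal parameterization, $\partial_tX$ has no tangential part, which gives $-\Phi/v$. In fixed angular coordinates the tangential field $Z=-\Phi v\nabla^M\rho$ adds $\langle\nabla^M\rho,Z\rangle=-\Phi v|\nabla^M\rho|^2$. Since $|\nabla^M\rho|^2=1-1/v^2$, this recovers $-\Phi v$ as in \eqref{eq:radial-flow}, and we would remark on this as a consistency check.
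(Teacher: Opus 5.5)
Your proposal is correct and follows the paper's proof: you use the chain rule with $\bar\nabla\bar\rho=\partial_\rho$ and $\partial_tX=-\Phi\nu$, then identify $\langle\partial_\rho,\nu\rangle=v^{-1}$ from the radial-graph normal. Your consistency check is also correct: adding the tangential field $Z=-\Phi v\nabla^M\rho$ in fixed angular coordinates recovers $-\Phi v$, as the paper notes just before the lemma.
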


\begin{proof}
Denote by $\bar\nabla$ the Levi--Civita connection of
$\mathbb H^{n+1}$. Then
$\bar\nabla\bar\rho=\partial_\rho$.

Since $\langle\partial_\rho,\nu\rangle=v^{-1}$, it follows
\[
  \partial_t\rho
  =\left\langle
      \overline\nabla\overline\rho,\partial_tX
    \right\rangle
  =-\Phi\langle\partial_\rho,\nu\rangle
  =-\frac{\Phi}{v}.
\]
\end{proof}

Define the normalized metric and speed by
\begin{equation}\label{eq:preliminary-normalized-metric-and-speed-hyperbolic}
  \widetilde g:=\lambda^2g,
  \qquad
  \widetilde\Phi:=\lambda^{\beta-k\alpha}\Phi.
\end{equation}
The Levi--Civita connections of $g$ and $\widetilde g$ agree at each
fixed time because $\lambda$ is spatially constant. We write this connection as $\widetilde\nabla$.

\begin{lemma}[Evolution of the normalized Weingarten map]
\label{lem:basic-normalized-weingarten-evolution-hyperbolic}
The normalized Weingarten map
$\hat h_i{}^j=\lambda^{-1}h_i{}^j$ satisfies
\begin{equation}\label{eq:basic-normalized-weingarten-evolution-hyperbolic}
  \partial_\tau\hat h_i{}^j
  =\widetilde\nabla^j\widetilde\nabla_i\widetilde\Phi
   +\widetilde\Phi\hat h_i{}^l\hat h_l{}^j
   -\lambda^{-2}\widetilde\Phi\delta_i{}^j
   -\gamma_0\hat h_i{}^j.
\end{equation}
\end{lemma}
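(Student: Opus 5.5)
The plan is to derive the identity from the standard evolution of the Weingarten map along a normal-speed flow in a space form of curvature $-1$, and then rescale by $\lambda$ and reparametrize time by $\tau$.

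\emph{Step 1: the unnormalized equation.} For $\partial_tX=-\Phi\nu$ in $\mathbb H^{n+1}$ with outward normal $\nu$ and $h_{ij}=-\langle\vec h_{ij},\nu\rangle$, the well-known formula (computed in the normal parametrization) is
\[
  \partial_t h_i{}^j=\nabla^j\nabla_i\Phi+\Phi h_i{}^l h_l{}^j-\Phi\delta_i{}^j .
\]
The term $-\Phi\delta_i{}^j$ is the ambient-curvature contribution $\bar K\Phi\delta_i{}^j$ with $\bar K=-1$. To confirm the sign conventions, I would check it on a centred geodesic sphere. There $h_i{}^j=\coth r\,\delta_i{}^j$ and $r'=-\Phi$, so
\[
  \partial_t\coth r=\Phi\,\mathrm{csch}^2r=\Phi(\coth^2r-1),
\]
which is exactly what the formula gives. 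The derivation goes through $\partial_tg_{ij}=-2\Phi h_{ij}$ and $\partial_th_{ij}=\nabla_i\nabla_j\Phi-\Phi h_{ik}h^k{}_j+\Phi\bar R_{\nu i\nu j}$ (with $\bar R_{\nu i\nu j}=-g_{ij}$ in our sign convention); raising an index with $g^{jk}$ then produces the stated equation.

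\emph{Step 2: rescaling.} Since $\hat h=\lambda^{-1}h$, $\widetilde g=\lambda^2 g$, $\widetilde\Phi=\lambda^{\beta-k\alpha}\Phi$, and the connections of $g$ and $\widetilde g$ coincide, we have
\[
  \widetilde\nabla^j\widetilde\nabla_i\widetilde\Phi=\lambda^{-2}\lambda^{\beta-k\alpha}\nabla^j\nabla_i\Phi .
\]
Using $d t/d\tau=\lambda^{\beta-1-k\alpha}$ and $d\lambda/d\tau=\gamma_0\lambda$, this gives
\[
  \partial_\tau\hat h_i{}^j=\lambda^{\beta-1-k\alpha}\lambda^{-1}\partial_t h_i{}^j-\gamma_0\hat h_i{}^j .
\]
The prefactor $\lambda^{\beta-2-k\alpha}$ converts each term of Step 1 as follows:
\begin{itemize}
  \item $\nabla^j\nabla_i\Phi$ becomes $\lambda^{\beta-k\alpha-2}\nabla^j\nabla_i\Phi=\widetilde\nabla^j\widetilde\nabla_i\widetilde\Phi$;
  \item $\Phi h h$ becomes $\lambda^{\beta-k\alpha}\Phi\,\hat h\hat h=\widetilde\Phi\hat h\hat h$;
  \item $-\Phi\delta$ becomes $-\lambda^{-2}\widetilde\Phi\,\delta$.
\end{itemize}
These are exactly the four terms of \eqref{eq:basic-normalized-weingarten-evolution-hyperbolic}.

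\emph{Main obstacle.} The only delicate point is bookkeeping of conventions: the sign of the curvature term under the convention $h_{ij}=-\langle\vec h_{ij},\nu\rangle$, and which index is raised. The sphere check in Step 1 pins both down. The identity is understood in the normal parametrization, as stated before the lemma, so no tangential transport term appears.
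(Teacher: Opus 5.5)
Your proposal is correct and follows the paper's proof. Both start from the unnormalized identity $\partial_t h_i{}^j=\nabla^j\nabla_i\Phi+\Phi h_i{}^lh_l{}^j-\Phi\delta_i{}^j$ and rescale it using $\hat h=\lambda^{-1}h$, $\widetilde g=\lambda^2g$, $\widetilde\Phi=\lambda^{\beta-k\alpha}\Phi$, $dt/d\tau=\lambda^{\beta-1-k\alpha}$ and $d\lambda/d\tau=\gamma_0\lambda$. Your geodesic-sphere check of the sign of the ambient-curvature term is a small addition, since the paper simply quotes the unnormalized formula.
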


\begin{proof}
Along the unnormalized flow $\partial_tX=-\Phi\nu$ in
$\mathbb H^{n+1}$,
\begin{equation}\label{eq:preliminary-unnormalized-h-evolution-hyperbolic}
  \partial_t h_i{}^j
  =\nabla^j\nabla_i\Phi
   +\Phi h_i{}^l h_l{}^j
   -\Phi\delta_i{}^j.
\end{equation}
Consequently,
\[
  \partial_t\hat h_i{}^j
  =-\frac{\lambda'}{\lambda}\hat h_i{}^j
   +\lambda^{-1}\nabla^j\nabla_i\Phi
   +\lambda^{-1}\Phi h_i{}^l h_l{}^j
   -\lambda^{-1}\Phi\delta_i{}^j.
\]
Using
\[
  h_i{}^j=\lambda\hat h_i{}^j,
  \qquad
  \Phi=\lambda^{k\alpha-\beta}\widetilde\Phi,
  \qquad
  \nabla^j\nabla_i\Phi
  =\lambda^{2+k\alpha-\beta}
   \widetilde\nabla^j\widetilde\nabla_i\widetilde\Phi,
\]
together with
\[
  \frac{\lambda'}{\lambda}
  =\gamma_0\lambda^{1+k\alpha-\beta},
  \qquad
  \frac{dt}{d\tau}=\lambda^{\beta-1-k\alpha},
\]
proves the assertion.
\end{proof}

For the speed evolution, put
\begin{equation}\label{eq:preliminary-speed-notation-hyperbolic}
  F:=\sigma_k^\alpha(\hat h),
  \qquad
  \widetilde\Phi=\lambda^\beta f(\rho)F,
\end{equation}
and denote
\begin{equation}\label{eq:derivatives-of-sigma-k}
  \dot\sigma_k^{pq}
  :=\widetilde g^{pl}
    \frac{\partial\sigma_k}{\partial A_q{}^l}(\hat{h}),\quad
  \ddot\sigma_k^{pq,rs}
  :=\widetilde g^{pl}\widetilde g^{rm}
    \frac{\partial^2\sigma_k}
    {\partial A_q{}^l\partial A_s{}^m}(\hat{h}).
\end{equation}
Accordingly,
\[
  \dot F^{pq}
  =\alpha\sigma_k^{\alpha-1}\dot\sigma_k^{pq}.
\]
Define
\begin{equation}\label{eq:preliminary-linearized-operator-hyperbolic}
  \mathcal L
  :=\partial_\tau
   -\lambda^\beta f(\rho)\dot F^{ij}
    \widetilde\nabla_i\widetilde\nabla_j.
\end{equation}

\begin{lemma}[Evolution of the normalized speed]
\label{lem:basic-normalized-speed-evolution-hyperbolic}
The normalized speed satisfies
\begin{equation}\label{eq:basic-normalized-speed-evolution-hyperbolic}
  \begin{aligned}
  \mathcal L\widetilde\Phi
  ={}&(\beta-k\alpha)\gamma_0\widetilde\Phi
     -\frac{f'(\rho)}{\lambda f(\rho)v}\widetilde\Phi^2+\lambda^\beta f(\rho)\widetilde\Phi\dot F^{ij}
      (\hat h^2)_{ij}
     -\lambda^{\beta-2}f(\rho)\widetilde\Phi
      \dot F^{ij}\widetilde g_{ij}.
  \end{aligned}
\end{equation}
\end{lemma}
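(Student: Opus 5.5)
We would derive the identity from the unnormalized speed evolution and then rescale, as in the proof of Lemma~\ref{lem:basic-normalized-weingarten-evolution-hyperbolic}.

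\textbf{Step 1: the unnormalized speed.} Along $\partial_tX=-\Phi\nu$ with $\Phi=f(\rho)F(h)$ and $F(h)=\sigma_k^\alpha(h)$, the chain rule gives
\[
\partial_t\Phi=f'(\rho)\,\partial_t\rho\,F+f\,\dot F^{ij}(h)\,\partial_t h_{i}{}^{j}.
\]
Lemma~\ref{lem:unnormalized-radial-distance-evolution-hyperbolic} gives $\partial_t\rho=-\Phi/v$, and \eqref{eq:preliminary-unnormalized-h-evolution-hyperbolic} gives the evolution of the Weingarten map. Together,
\[
\partial_t\Phi=f\dot F^{ij}\nabla_i\nabla_j\Phi-\frac{f'}{fv}\Phi^2+f\Phi\dot F^{ij}(h^2)_{ij}-f\Phi\dot F^{ij}g_{ij}.
\]
This is computed in the normal parametrization, which is the convention fixed for all geometric evolution identities in this subsection.

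\textbf{Step 2: rescale.} Substitute
\[
\widetilde\Phi=\lambda^{\beta-k\alpha}\Phi,\qquad h=\lambda\hat h,\qquad \widetilde g=\lambda^2 g.
\]
The connection is unchanged. By homogeneity, $\dot F(h)$ has degree $k\alpha-1$ in $h$. With the index conventions in \eqref{eq:derivatives-of-sigma-k}, where indices are raised with $\widetilde g$, the terms scale as follows:
\begin{itemize}
\item the second-order term: $\dot F^{ij}(h)\nabla_i\nabla_j$ written with $g$ becomes $\lambda^{k\alpha-1}\cdot\lambda^{2}\,\dot F^{ij}\widetilde\nabla_i\widetilde\nabla_j$, the factor $\lambda^2$ coming from $g^{-1}=\lambda^2\widetilde g^{-1}$;
\item the quadratic term: $\dot F^{ij}(h^2)_{ij}$ scales as $\lambda^{k\alpha+1}\dot F^{ij}(\hat h^2)_{ij}$;
\item the trace term: $\dot F^{ij}g_{ij}$ scales as $\lambda^{k\alpha-1}\dot F^{ij}\widetilde g_{ij}$, since $g=\lambda^{-2}\widetilde g$.
\end{itemize}

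\textbf{Step 3: convert the time derivative.} Use
\[
\partial_\tau=\lambda^{\beta-1-k\alpha}\partial_t,\qquad \frac{\lambda'}{\lambda}=\gamma_0\lambda^{1+k\alpha-\beta}.
\]
Differentiating the prefactor $\lambda^{\beta-k\alpha}$ produces exactly $(\beta-k\alpha)\gamma_0\widetilde\Phi$.

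\textbf{Step 4: collect powers of $\lambda$.} Multiplying through by $\lambda^{\beta-1-k\alpha}\cdot\lambda^{\beta-k\alpha}$ and rewriting each term in terms of $\widetilde\Phi$:
\begin{itemize}
\item diffusion term: exponent $\beta$, giving $\lambda^\beta f\dot F^{ij}\widetilde\nabla_i\widetilde\nabla_j\widetilde\Phi$, which is absorbed into $\mathcal L$;
\item $f'$ term: exponent $-1$, giving $-\dfrac{f'}{\lambda f v}\widetilde\Phi^2$;
\item $\hat h^2$ term: exponent $\beta$;
\item ambient-curvature term: exponent $\beta-2$.
\end{itemize}
These are exactly the terms in \eqref{eq:basic-normalized-speed-evolution-hyperbolic}.

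\textbf{Main obstacle.} The only delicate point is bookkeeping of the index positions and of which metric raises indices in $\dot F^{ij}$. A wrong convention shifts the $\lambda$-powers by $\pm2$, so the exponents in Step 2 must be checked carefully against \eqref{eq:derivatives-of-sigma-k}. A quick consistency check is the centred-sphere case, where the result must reproduce the ODE behaviour implied by \eqref{eq:normalized-radial-flow-hat-h}.
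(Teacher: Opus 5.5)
Your proposal is correct and follows essentially the same route as the paper. You compute $\partial_t\Phi$ by the chain rule from $\partial_t\rho=-\Phi/v$ and the hyperbolic Weingarten evolution, then use $\partial_\tau\widetilde\Phi=(\beta-k\alpha)\gamma_0\widetilde\Phi+\lambda^{2\beta-1-2k\alpha}\partial_t\Phi$ and rescale term by term; your exponents $\beta$, $-1$, $\beta$, $\beta-2$ for the four terms all check out.
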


\begin{proof}
We first calculate the evolution of $\Phi$ along~\eqref{eq:unnormalized-flow}.

By~\eqref{eq:unnormalized-radial-distance-evolution-hyperbolic} and
\eqref{eq:preliminary-unnormalized-h-evolution-hyperbolic},
\begin{equation}\label{eq:unnormalized-speed-evolution-hyperbolic}
  \begin{aligned}
  \partial_t\Phi
  & =f'(\rho)\rho_t\sigma_k^\alpha(h)
     +f(\rho)
      \frac{\partial\sigma_k^\alpha}{\partial A_i{}^j}(h)
      \partial_t h_i{}^j\\
  &=-\frac{f'(\rho)}{f(\rho)v}\Phi^2
     +f(\rho)
      \frac{\partial\sigma_k^\alpha}{\partial A_i{}^j}(h)
      \left(
        \nabla^j\nabla_i\Phi
        +\Phi h_i{}^l h_l{}^j
        -\Phi\delta_i{}^j
      \right),
  \end{aligned}
\end{equation}
Recall $\widetilde\Phi=\lambda^{\beta-k\alpha}\Phi$.
Using \eqref{eq:time-and-lambda-identities}, we therefore obtain
\begin{equation}\label{eq:normalized-speed-chain-rule-hyperbolic}
  \partial_\tau\widetilde\Phi
  =(\beta-k\alpha)\gamma_0\widetilde\Phi
   +\lambda^{2\beta-1-2k\alpha}\partial_t\Phi.
\end{equation}

Since
\[
  h_i{}^j=\lambda\hat h_i{}^j,
  \qquad
  \Phi=\lambda^{k\alpha-\beta}\widetilde\Phi,
  \qquad
  \nabla^j\nabla_i\Phi
  =\lambda^{2+k\alpha-\beta}
    \widetilde\nabla^j\widetilde\nabla_i\widetilde\Phi,
\]
and
\[
  \frac{\partial\sigma_k^\alpha}{\partial A_i{}^j}(h)
  =\lambda^{k\alpha-1}
   \frac{\partial\sigma_k^\alpha}{\partial A_i{}^j}(\hat h),
\]
equation \eqref{eq:unnormalized-speed-evolution-hyperbolic} becomes
\begin{equation}\label{eq:scaled-unnormalized-speed-evolution-hyperbolic}
  \begin{aligned}
  \lambda^{2\beta-1-2k\alpha}\partial_t\Phi
  ={}&\lambda^\beta f(\rho)\dot F^{ij}
      \widetilde\nabla_i\widetilde\nabla_j\widetilde\Phi
     -\frac{f'(\rho)}{\lambda f(\rho)v}\widetilde\Phi^2\\
   &+\lambda^\beta f(\rho)\widetilde\Phi\dot F^{ij}
      (\hat h^2)_{ij}
     -\lambda^{\beta-2}f(\rho)\widetilde\Phi
      \dot F^{ij}\widetilde g_{ij}.
  \end{aligned}
\end{equation}
Substituting
\eqref{eq:scaled-unnormalized-speed-evolution-hyperbolic} into
\eqref{eq:normalized-speed-chain-rule-hyperbolic} proves
\eqref{eq:basic-normalized-speed-evolution-hyperbolic}.
\end{proof}

\begin{lemma}[Evolution of the normalized support function]
\label{lem:basic-normalized-support-evolution-hyperbolic}
For
\begin{equation}\label{eq:preliminary-normalized-support-hyperbolic}
  \widetilde u:=\lambda u
  =\frac{\lambda\sinh\rho}{v},
\end{equation}
we have
\begin{equation}\label{eq:basic-normalized-support-evolution-hyperbolic}
  \begin{aligned}
  \mathcal L\widetilde u
  ={}&\gamma_0\widetilde u
   -(1+k\alpha)\cosh\rho\,\widetilde\Phi
   +\lambda^\beta f(\rho)\widetilde u\dot F^{ij}
      (\hat h^2)_{ij}\\
  &+\lambda^\beta f'(\rho)F\sinh\rho\,(1-v^{-2}).
  \end{aligned}
\end{equation}
\end{lemma}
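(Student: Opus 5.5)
The plan is to derive the evolution of the unnormalized support function $u=\langle V,\nu\rangle$, with $V=\sinh\rho\,\partial_\rho$, along \eqref{eq:unnormalized-flow}, and then rescale exactly as in the proof of Lemma~\ref{lem:basic-normalized-speed-evolution-hyperbolic}. The two geometric inputs are the following. First, $V$ is a closed conformal field, $\bar\nabla_YV=\cosh\rho\,Y$. Second, under $\partial_tX=-\Phi\nu$ in the normal parametrization one has $\partial_t\nu=\nabla\Phi$.

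\textbf{Time derivative of $u$.} From the two inputs above,
\[
  \partial_tu=\langle\bar\nabla_{\partial_tX}V,\nu\rangle+\langle V,\partial_t\nu\rangle
  =-\Phi\cosh\rho+\langle V,\nabla\Phi\rangle .
\]
Since $\Phi=f(\rho)\sigma_k^\alpha(h)$, the gradient term splits as
$\langle V,\nabla\Phi\rangle=f\langle V,\nabla\sigma_k^\alpha\rangle+f'\sigma_k^\alpha\langle V,\nabla\rho\rangle$. Here $\nabla\rho$ is the tangential projection of $\partial_\rho$, so by \eqref{eq:radial-velocity-normal-component}
\[
  \langle V,\nabla\rho\rangle=\sinh\rho\,|\nabla\rho|^2=\sinh\rho\,(1-v^{-2}).
\]

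\textbf{Hessian of $u$.} Next I compute the Hessian of $u$ in an orthonormal frame $\{e_i\}$. Differentiating $u$ once gives $\nabla_iu=h_{il}\langle V,e_l\rangle$, using $\bar\nabla_iV=\cosh\rho\,e_i$ and the convention $h_{ij}=-\langle\vec h_{ij},\nu\rangle$. Differentiating again and applying Codazzi (valid in the space form) gives
\[
  \nabla_i\nabla_ju=\cosh\rho\,h_{ij}-u\,(h^2)_{ij}+\langle V,e_l\rangle\nabla_lh_{ij}.
\]
Contracting with $f\,\partial\sigma_k^\alpha/\partial h_{ij}$ and using Euler's relation $\frac{\partial\sigma_k^\alpha}{\partial h_{ij}}h_{ij}=k\alpha\,\sigma_k^\alpha$ yields
\[
  f\,\frac{\partial\sigma_k^\alpha}{\partial h_{ij}}\nabla_i\nabla_ju
  =k\alpha\cosh\rho\,\Phi-fu\,\frac{\partial\sigma_k^\alpha}{\partial h_{ij}}(h^2)_{ij}+f\langle V,\nabla\sigma_k^\alpha\rangle .
\]
Subtracting this from $\partial_tu$, the terms $f\langle V,\nabla\sigma_k^\alpha\rangle$ cancel. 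What remains is
\[
  \partial_tu-f\,\frac{\partial\sigma_k^\alpha}{\partial h_{ij}}\nabla_i\nabla_ju
  =-(1+k\alpha)\cosh\rho\,\Phi
   +fu\,\frac{\partial\sigma_k^\alpha}{\partial h_{ij}}(h^2)_{ij}
   +f'\sigma_k^\alpha(h)\sinh\rho\,(1-v^{-2}).
\]

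\textbf{Rescaling.} Write $\widetilde u=\lambda u$. By \eqref{eq:time-and-lambda-identities},
\[
  \partial_\tau\widetilde u=\gamma_0\widetilde u+\lambda^{\beta-k\alpha}\partial_tu .
\]
I then convert each term with the scalings already used for the speed: $h=\lambda\hat h$, $\partial\sigma_k^\alpha(h)=\lambda^{k\alpha-1}\partial\sigma_k^\alpha(\hat h)$, and $g^{-1}=\lambda^2\widetilde g^{-1}$.
\begin{itemize}
\item The Hessian term becomes $\lambda^\beta f\dot F^{ij}\widetilde\nabla_i\widetilde\nabla_j\widetilde u$, which is the operator term in $\mathcal L$.
\item The $(h^2)$ term becomes $\lambda^\beta f\widetilde u\dot F^{ij}(\hat h^2)_{ij}$.
\item The reaction term $\lambda^{\beta-k\alpha}\Phi$ becomes $\widetilde\Phi$.
\item The last term becomes $\lambda^\beta f'F\sinh\rho(1-v^{-2})$, with $F=\sigma_k^\alpha(\hat h)$.
\end{itemize}
Together these give \eqref{eq:basic-normalized-support-evolution-hyperbolic}.

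The main point needing care is sign and convention bookkeeping in the Hessian identity for $u$, namely the outward normal and the sign of $h_{ij}$; I would double-check it on a centred sphere. As in the preceding lemmas, the identity is computed in the normal parametrization. In fixed angular coordinates it acquires only a tangential transport term, which is harmless at spatial extrema.
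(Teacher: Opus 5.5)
Your proposal is correct and follows essentially the same route as the paper: the closed conformal field $V=\sinh\rho\,\partial_\rho$, $\partial_t\nu=\nabla\Phi$, the Hessian identity $\nabla_i\nabla_ju=\cosh\rho\,h_{ij}-u(h^2)_{ij}+\langle V,e_l\rangle\nabla_lh_{ij}$ (Codazzi), cancellation of the $\langle V,\nabla\sigma_k^\alpha\rangle$ terms, and then rescaling by $\lambda^{\beta-k\alpha}$. The only difference is cosmetic: you apply Euler's identity before rescaling, whereas the paper applies it afterwards, via $\dot F^{ij}\hat h_{ij}=k\alpha F$.
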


\begin{proof}
Recall
\[
  u=\langle V,\nu\rangle,
  \qquad
  V=\sinh\rho\,\partial_\rho,
  \qquad
  \overline\nabla V=\cosh\rho\,\mathrm{Id}.
\]
Define
\begin{equation}\label{eq:tangential-components-of-V-hyperbolic}
  V_p:=\overline g(V,X_p),
  \qquad
  V^p:=g^{pq}V_q.
\end{equation}
Therefore,
\begin{equation}\label{eq:first-derivative-support-hyperbolic}
  \begin{aligned}
  \nabla_i u
  &=\left\langle\overline\nabla_{X_i}V,\nu\right\rangle
    +\left\langle V,\overline\nabla_{X_i}\nu\right\rangle\\
  &=h_i{}^pV_p,
  \end{aligned}
\end{equation}
and
\[
  \nabla_j\nabla_i u
  =V^p\nabla_p h_{ij}
   +\cosh\rho\,h_{ij}
   -u(h^2)_{ij}.
\]
We next calculate the time derivative. Along the unnormalized flow~\eqref{eq:unnormalized-flow}, the unit normal satisfies
$\partial_t\nu=\nabla\Phi$. Therefore,
\begin{equation}\label{eq:unnormalized-support-time-derivative-hyperbolic}
  \begin{aligned}
  \partial_tu
  &=\left\langle
      \overline\nabla_{\partial_tX}V,\nu
    \right\rangle
    +\left\langle V,\partial_t\nu\right\rangle\\
  &=-\Phi\cosh\rho+V^p\nabla_p\Phi.
  \end{aligned}
\end{equation}
By
$$\nabla_p\Phi = f'(\rho)\n_p\rho \sigma_k^\alpha(h) + \alpha f(\rho)\sigma_k^{\alpha-1}\frac{\p\sigma_k}{\p A_i{ }^j}(h)\n_ph_i{ }^j,$$
we have
\begin{equation}\label{eq:unnormalized-support-evolution-expanded-hyperbolic}
  \begin{aligned}
  \partial_tu
  ={}&\alpha f(\rho)\sigma_k^{\alpha-1}
      \frac{\partial\sigma_k}{\partial A_i{}^l}(h)
      g^{lj}\nabla_j\nabla_i u\\
   &-\alpha f(\rho)\sigma_k^{\alpha-1}
      \frac{\partial\sigma_k}{\partial A_i{}^l}(h)
      g^{lj}\cosh\rho\,h_{ij}\\
   &+\alpha f(\rho)\sigma_k^{\alpha-1}
      \frac{\partial\sigma_k}{\partial A_i{}^l}(h)
      g^{lj}u(h^2)_{ij}
      -\Phi\cosh\rho\\
   &+f'(\rho)\sigma_k^\alpha(h)
  \sinh\rho\,(1-v^{-2})
  \end{aligned}
\end{equation}
where we used $V^p\nabla_p\rho=\sinh\rho\,(1-v^{-2})$.

Since
\begin{equation*}\label{eq:support-normalization-relations-hyperbolic}
  \begin{gathered}
  \widetilde\nabla=\nabla,
  \quad
  \widetilde u=\lambda u,
  \quad
  \hat h_{ij}=\lambda h_{ij},
  \quad
  (\hat h^2)_{ij}=(h^2)_{ij},\\
  \Phi=\lambda^{k\alpha-\beta}\widetilde\Phi,\quad
  \alpha\sigma_k^{\alpha-1}(h)
  \frac{\partial\sigma_k}{\partial A_i{}^l}(h)g^{lj}
  =\lambda^{1+k\alpha}\dot F^{ij}.
  \end{gathered}
\end{equation*}
It follows from
\eqref{eq:unnormalized-support-evolution-expanded-hyperbolic} that
\begin{equation}\label{eq:normalized-support-time-derivative-hyperbolic}
  \begin{aligned}
  \partial_\tau\widetilde u
  &=\lambda_\tau u
    +\lambda\frac{dt}{d\tau}\partial_tu\\
  ={}&\gamma_0\widetilde u
    +\lambda^\beta f(\rho)\dot F^{ij}
      \widetilde\nabla_j\widetilde\nabla_i\widetilde u
    -\lambda^\beta f(\rho)\cosh\rho\,
      \dot F^{ij}\hat h_{ij}\\
   &+\lambda^\beta f(\rho)\widetilde u\dot F^{ij}
      (\hat h^2)_{ij}
    -\widetilde\Phi\cosh\rho
    +\lambda^\beta f'(\rho)F\sinh\rho\,(1-v^{-2})\\
  ={}&\lambda^\beta f(\rho)\dot F^{ij}
      \widetilde\nabla_i\widetilde\nabla_j\widetilde u
    +\gamma_0\widetilde u
    -(1+k\alpha)\cosh\rho\,\widetilde\Phi\\
   &+\lambda^\beta f(\rho)\widetilde u\dot F^{ij}
      (\hat h^2)_{ij}
    +\lambda^\beta f'(\rho)F\sinh\rho\,(1-v^{-2}).
  \end{aligned}
\end{equation}
Moving the Hessian term to the left-hand side proves
\eqref{eq:basic-normalized-support-evolution-hyperbolic}.
\end{proof}
Applying Simons' identity to
commute the second covariant derivatives of the second fundamental
form, we obtain refined evolution equations for the normalized
Weingarten map and its inverse.
\begin{lemma}
\begin{equation}\label{eq:full-normalized-weingarten-evolution}
  \begin{aligned}
  \mathcal L\hat h_i{}^j
  ={}&
  \lambda^\beta f(\rho)
  \alpha\sigma_k^{\alpha-1}\ddot\sigma_k^{pq,rs}
  \widetilde\nabla^j\hat h_{pq}
  \widetilde\nabla_i\hat h_{rs}
  \\
  &+
  \lambda^\beta f(\rho)
  \alpha(\alpha-1)\sigma_k^{\alpha-2}
  \dot\sigma_k^{pq}\dot\sigma_k^{rs}
  \widetilde\nabla^j\hat h_{pq}
  \widetilde\nabla_i\hat h_{rs}
  \\
  &+
  \lambda^\beta f'(\rho)
  \alpha\sigma_k^{\alpha-1}\dot\sigma_k^{pq}
  \Bigl(
    \widetilde\nabla^j\rho\,
    \widetilde\nabla_i\hat h_{pq}
    +\widetilde\nabla_i\rho\,
    \widetilde\nabla^j\hat h_{pq}
  \Bigr)
  \\
  &+
  \lambda^\beta f'(\rho)
  \sigma_k^\alpha
  \widetilde\nabla^j\widetilde\nabla_i\rho+
  \lambda^\beta f''(\rho)
  \sigma_k^\alpha
  \widetilde\nabla^j\rho\,
  \widetilde\nabla_i\rho
  \\
  &+
  \lambda^\beta f(\rho)
  \alpha\sigma_k^{\alpha-1}
  \left(
    \dot\sigma_k^{pq}(\hat h^2)_{pq}
    +\lambda^{-2}\dot\sigma_k^{pq}\widetilde g_{pq}
  \right)\hat h_i{}^j
  \\
  &+
  \lambda^\beta f(\rho)
  (1-k\alpha)\sigma_k^\alpha(\hat h^2)_i{}^j
  -
  \lambda^{\beta-2} f(\rho)
  (1+k\alpha)\sigma_k^\alpha\delta_i{}^j
  -\gamma_0\hat h_i{}^j.
  \end{aligned}
\end{equation}
\end{lemma}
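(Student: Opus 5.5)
We expand the Hessian of the normalized speed in the identity of Lemma~\ref{lem:basic-normalized-weingarten-evolution-hyperbolic}, use Simons' identity to exchange the order of the covariant derivatives on $\hat h$, and then absorb the principal term into the operator $\mathcal L$ of \eqref{eq:preliminary-linearized-operator-hyperbolic}. Homogeneity of $\sigma_k$ collects the zero-order terms.

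\textbf{Step 1: expand the Hessian of $\widetilde\Phi$.} We start from \eqref{eq:basic-normalized-weingarten-evolution-hyperbolic}, where $\widetilde\Phi=\lambda^\beta f(\rho)F(\hat h)$ with $F=\sigma_k^\alpha$. Since $\lambda$ is spatially constant, the product and chain rules give
\[
\widetilde\nabla^j\widetilde\nabla_i\widetilde\Phi
=\lambda^\beta\Bigl(f\,\widetilde\nabla^j\widetilde\nabla_iF
+f'\bigl(\widetilde\nabla^j\rho\,\widetilde\nabla_iF+\widetilde\nabla_i\rho\,\widetilde\nabla^jF\bigr)
+f''F\,\widetilde\nabla^j\rho\,\widetilde\nabla_i\rho
+f'F\,\widetilde\nabla^j\widetilde\nabla_i\rho\Bigr).
\]
We then use $\widetilde\nabla_iF=\alpha\sigma_k^{\alpha-1}\dot\sigma_k^{pq}\widetilde\nabla_i\hat h_{pq}$. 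The second derivative splits as
\[
\widetilde\nabla^j\widetilde\nabla_iF=\dot F^{pq}\widetilde\nabla^j\widetilde\nabla_i\hat h_{pq}
+\alpha\sigma_k^{\alpha-1}\ddot\sigma_k^{pq,rs}\widetilde\nabla^j\hat h_{pq}\widetilde\nabla_i\hat h_{rs}
+\alpha(\alpha-1)\sigma_k^{\alpha-2}\dot\sigma_k^{pq}\dot\sigma_k^{rs}\widetilde\nabla^j\hat h_{pq}\widetilde\nabla_i\hat h_{rs}.
\]
This already produces the first five groups of terms in \eqref{eq:full-normalized-weingarten-evolution}: the $\ddot\sigma_k$ term, the $\alpha(\alpha-1)$ term, the mixed $f'$ gradient term, and the $f'\widetilde\nabla^j\widetilde\nabla_i\rho$ and $f''$ terms.

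\textbf{Step 2: commute derivatives with Simons' identity.} Next we rewrite $\dot F^{pq}\widetilde\nabla^j\widetilde\nabla_i\hat h_{pq}$ using Simons' identity in $\mathbb H^{n+1}$, which follows from Codazzi (valid because the ambient space has constant curvature) and the Gauss equation $R_{ijkl}=h_{ik}h_{jl}-h_{il}h_{jk}-(g_{ik}g_{jl}-g_{il}g_{jk})$. We use the relations $h=\lambda\hat h$, $g=\lambda^{-2}\widetilde g$ and $h_{ij}=\lambda^{-1}\hat h_{ij}$. Each curvature factor $\hat h$ then carries the same weight as a $\widetilde g$-contraction, while the ambient term $-1$ picks up a factor $\lambda^{-2}$. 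In normalized form the identity reads
\[
\widetilde\nabla^j\widetilde\nabla_i\hat h_{pq}
=\widetilde\nabla_p\widetilde\nabla_q\hat h_i{}^j
+(\hat h^2)_{pq}\hat h_i{}^j-\hat h_{pq}(\hat h^2)_i{}^j
+\lambda^{-2}\bigl(\widetilde g_{pq}\hat h_i{}^j-\hat h_{pq}\delta_i{}^j\bigr),
\]
modulo terms that are symmetric in the pairs and cancel after contraction. The leading term $\lambda^\beta f\dot F^{pq}\widetilde\nabla_p\widetilde\nabla_q\hat h_i{}^j$ is exactly the second-order part of $\mathcal L$, and we move it to the left-hand side.

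\textbf{Step 3: collect the zero-order terms.} By Euler's relation, $\dot F^{pq}\hat h_{pq}=k\alpha F$. Contracting the reaction terms of Step~2 with $\dot F^{pq}$ gives
\[
\alpha\sigma_k^{\alpha-1}\bigl(\dot\sigma_k^{pq}(\hat h^2)_{pq}+\lambda^{-2}\dot\sigma_k^{pq}\widetilde g_{pq}\bigr)\hat h_i{}^j
-k\alpha\sigma_k^\alpha(\hat h^2)_i{}^j-k\alpha\lambda^{-2}\sigma_k^\alpha\delta_i{}^j .
\]
These are multiplied by $\lambda^\beta f$ and added to the remaining terms of \eqref{eq:basic-normalized-weingarten-evolution-hyperbolic}:
\begin{itemize}
\item $\widetilde\Phi(\hat h^2)_i{}^j=\lambda^\beta f\sigma_k^\alpha(\hat h^2)_i{}^j$ combines with the $\hat h^2$ term above to give the coefficient $(1-k\alpha)$;
\item $-\lambda^{-2}\widetilde\Phi\delta_i{}^j$ combines with the $\delta$ term above to give $-(1+k\alpha)\lambda^{\beta-2}f\sigma_k^\alpha\delta_i{}^j$;
\item $-\gamma_0\hat h_i{}^j$ carries over unchanged.
\end{itemize}

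\textbf{Main obstacle.} The only delicate point is Step~2. One must fix the sign conventions ($h_{ij}=-\langle\vec h_{ij},\nu\rangle$, with curvature $-1$) and check that the ambient-curvature term contributes $+\lambda^{-2}\widetilde g_{pq}\hat h_i{}^j-\lambda^{-2}\hat h_{pq}\delta_i{}^j$ with the correct power of $\lambda$ after normalization. I would verify this on a centred geodesic sphere. There $\hat h=\lambda^{-1}\coth r\,\mathrm{Id}$, all gradient terms vanish, and the identity must reduce to the ODE obtained by differentiating $\lambda^{-1}\coth r(t)$ along \eqref{eq:unnormalized-flow}. This consistency check pins down both the signs and the $(1\pm k\alpha)$ coefficients.
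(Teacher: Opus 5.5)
Your proposal is correct and follows the paper's proof essentially step for step: expand the Hessian of $\widetilde\Phi$ by the product and chain rules, apply the hyperbolic commutation formula rescaled by $\widetilde g=\lambda^2 g$, $\hat h_{ij}=\lambda h_{ij}$, and collect the zero-order terms with Euler's identity. One justification should be corrected. The mixed terms drop out after contraction with $\dot\sigma_k^{pq}$ because $\dot\sigma_k$ commutes with $\hat h$ (the two are simultaneously diagonal), which is how the paper disposes of them; symmetry in the index pairs alone does not make them cancel.
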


\begin{proof}
Recall that
\[
  \widetilde\Phi
  =\lambda^\beta f(\rho)\sigma_k^\alpha(\hat h).
\]
Applying the product and chain rules twice, we obtain
\begin{equation}\label{eq:lem25-expanded-speed-hessian-hyperbolic}
  \begin{aligned}
  \widetilde\nabla^j\widetilde\nabla_i\widetilde\Phi
  ={}&
  \lambda^\beta f(\rho)
  \alpha\sigma_k^{\alpha-1}\dot\sigma_k^{pq}
  \widetilde\nabla^j\widetilde\nabla_i\hat h_{pq}
  +
  \lambda^\beta f(\rho)
  \alpha\sigma_k^{\alpha-1}\ddot\sigma_k^{pq,rs}
  \widetilde\nabla^j\hat h_{pq}
  \widetilde\nabla_i\hat h_{rs}
  \\
  &+
  \lambda^\beta f(\rho)
  \alpha(\alpha-1)\sigma_k^{\alpha-2}
  \dot\sigma_k^{pq}\dot\sigma_k^{rs}
  \widetilde\nabla^j\hat h_{pq}
  \widetilde\nabla_i\hat h_{rs}
  \\
  &+
  \lambda^\beta f'(\rho)
  \alpha\sigma_k^{\alpha-1}\dot\sigma_k^{pq}
  \Bigl(
    \widetilde\nabla^j\rho\,
    \widetilde\nabla_i\hat h_{pq}
    +\widetilde\nabla_i\rho\,
    \widetilde\nabla^j\hat h_{pq}
  \Bigr)
  \\
  &+
  \lambda^\beta f'(\rho)
  \sigma_k^\alpha
  \widetilde\nabla^j\widetilde\nabla_i\rho
  +
  \lambda^\beta f''(\rho)
  \sigma_k^\alpha
  \widetilde\nabla^j\rho\,
  \widetilde\nabla_i\rho.
  \end{aligned}
\end{equation}

For a hypersurface in $\mathbb H^{n+1}$, the standard commutation
formula for the second covariant derivatives of its second fundamental
form is
\begin{equation}\label{eq:lem25-unscaled-commutation-formula-hyperbolic}
  \begin{aligned}
  \nabla^j\nabla_i h_{pq}
  ={}&\nabla_p\nabla_q h_i{}^j
      +h_i{}^j(h^2)_{pq}
      -h_{pq}(h^2)_i{}^j\\
     &+h_q{}^j(h^2)_{pi}
      -h_{pi}(h^2)_q{}^j\\
     &-\delta_i{}^j h_{pq}
      +g_{pq}h_i{}^j
      -\delta_q{}^j h_{pi}
      +g_{pi}h_q{}^j.
  \end{aligned}
\end{equation}
Since $\widetilde g=\lambda^2g$ and
$\hat h_i{}^j=\lambda^{-1}h_i{}^j$, we have, as $(0,2)$-tensors,
$\hat h_{ij}=\lambda h_{ij}$. Therefore, rescaling
\eqref{eq:lem25-unscaled-commutation-formula-hyperbolic} gives
\begin{equation}\label{eq:lem25-scaled-commutation-formula-hyperbolic}
  \begin{aligned}
  \widetilde\nabla^j\widetilde\nabla_i\hat h_{pq}
  ={}&\widetilde\nabla_p\widetilde\nabla_q\hat h_i{}^j
      +\hat h_i{}^j(\hat h^2)_{pq}
      -\hat h_{pq}(\hat h^2)_i{}^j\\
     &+\hat h_q{}^j(\hat h^2)_{pi}
      -\hat h_{pi}(\hat h^2)_q{}^j\\
     &-\lambda^{-2}\Bigl(
        \delta_i{}^j\hat h_{pq}
        -\widetilde g_{pq}\hat h_i{}^j
        +\delta_q{}^j\hat h_{pi}
        -\widetilde g_{pi}\hat h_q{}^j
      \Bigr).
  \end{aligned}
\end{equation}
Euler's identity gives
\begin{equation}\label{eq:lem25-euler-identity-hyperbolic}
  \dot\sigma_k^{pq}\hat h_{pq}=k\sigma_k(\hat{h}).
\end{equation}
Moreover, since $\dot\sigma_k$ commutes with $\hat h$, the mixed
terms in \eqref{eq:lem25-scaled-commutation-formula-hyperbolic} satisfy
\[
  \dot\sigma_k^{pq}
  \bigl(
    \hat h_q{}^j(\hat h^2)_{pi}
    -\hat h_{pi}(\hat h^2)_q{}^j
  \bigr)=0
\]
and
\[
  \dot\sigma_k^{pq}
  \bigl(
    \delta_q{}^j\hat h_{pi}
    -\widetilde g_{pi}\hat h_q{}^j
  \bigr)=0.
\]

Substituting \eqref{eq:lem25-scaled-commutation-formula-hyperbolic} into
the first term on the right-hand side of
\eqref{eq:lem25-expanded-speed-hessian-hyperbolic}, and then using
\eqref{eq:basic-normalized-weingarten-evolution-hyperbolic}, we obtain~\eqref{eq:full-normalized-weingarten-evolution}.
\end{proof}

\begin{lemma}[Evolution of the inverse Weingarten map]
\label{lem:inverse-weingarten-evolution-hyperbolic}
Whenever $\hat h_i{}^j$ is positive definite, let
$\check h_i{}^j$ denote its inverse and set
$(\check h^2)_i{}^j:=\check h_i{}^p\check h_p{}^j$. Then
\begin{equation}\label{eq:full-inverse-weingarten-evolution-hyperbolic}
  \begin{aligned}
  \mathcal L\check h_i{}^j
  ={}&-
  \lambda^\beta f(\rho)
  \alpha\sigma_k^{\alpha-1}\ddot\sigma_k^{pq,rs}
  \check h_i{}^a\check h_b{}^j
  \widetilde\nabla^b\hat h_{pq}
  \widetilde\nabla_a\hat h_{rs}
  \\
  &-
  \lambda^\beta f(\rho)
  \alpha(\alpha-1)\sigma_k^{\alpha-2}
  \dot\sigma_k^{pq}\dot\sigma_k^{rs}
  \check h_i{}^a\check h_b{}^j
  \widetilde\nabla^b\hat h_{pq}
  \widetilde\nabla_a\hat h_{rs}
  \\
  &-2\lambda^\beta f(\rho)
  \alpha\sigma_k^{\alpha-1}\dot\sigma_k^{rs}
  \check h_i{}^a
  (\widetilde\nabla_r\hat h_a{}^b)\check h_b{}^p
  (\widetilde\nabla_s\hat h_p{}^q)\check h_q{}^j
  \\
  &-
  \lambda^\beta f'(\rho)
  \alpha\sigma_k^{\alpha-1}\dot\sigma_k^{pq}
  \check h_i{}^a\check h_b{}^j
  \Bigl(
    \widetilde\nabla^b\rho\,
    \widetilde\nabla_a\hat h_{pq}
    +\widetilde\nabla_a\rho\,
    \widetilde\nabla^b\hat h_{pq}
  \Bigr)
  \\
  &-
  \lambda^\beta f'(\rho)\sigma_k^\alpha
  \check h_i{}^a\check h_b{}^j
  \widetilde\nabla^b\widetilde\nabla_a\rho
  -
  \lambda^\beta f''(\rho)\sigma_k^\alpha
  \check h_i{}^a\check h_b{}^j
  \widetilde\nabla^b\rho\,
  \widetilde\nabla_a\rho
  \\
  &-
  \lambda^\beta f(\rho)
  \alpha\sigma_k^{\alpha-1}
  \left(
    \dot\sigma_k^{pq}(\hat h^2)_{pq}
    +\lambda^{-2}\dot\sigma_k^{pq}\widetilde g_{pq}
  \right)\check h_i{}^j
  \\
  &+
  \lambda^\beta f(\rho)
  (k\alpha-1)\sigma_k^\alpha\delta_i{}^j
  +
  \lambda^{\beta-2}f(\rho)
  (1+k\alpha)\sigma_k^\alpha(\check h^2)_i{}^j
  +\gamma_0\check h_i{}^j.
  \end{aligned}
\end{equation}
\end{lemma}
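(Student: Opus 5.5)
The plan is to derive \eqref{eq:full-inverse-weingarten-evolution-hyperbolic} directly from \eqref{eq:full-normalized-weingarten-evolution}, using the matrix identity for the derivatives of an inverse. Since $\check h\hat h=\mathrm{Id}$, differentiation gives
\[
  \partial_\tau\check h_i{}^j=-\check h_i{}^a(\partial_\tau\hat h_a{}^b)\check h_b{}^j,
  \qquad
  \widetilde\nabla_r\check h_i{}^j=-\check h_i{}^a(\widetilde\nabla_r\hat h_a{}^b)\check h_b{}^j.
\]
Differentiating the second identity once more gives
\[
  \widetilde\nabla_s\widetilde\nabla_r\check h_i{}^j
  =-\check h_i{}^a(\widetilde\nabla_s\widetilde\nabla_r\hat h_a{}^b)\check h_b{}^j
   +\check h_i{}^a(\widetilde\nabla_s\hat h_a{}^b)\check h_b{}^p(\widetilde\nabla_r\hat h_p{}^q)\check h_q{}^j
   +\check h_i{}^a(\widetilde\nabla_r\hat h_a{}^b)\check h_b{}^p(\widetilde\nabla_s\hat h_p{}^q)\check h_q{}^j .
\]
Contracting with $\lambda^\beta f(\rho)\dot F^{rs}=\lambda^\beta f(\rho)\alpha\sigma_k^{\alpha-1}\dot\sigma_k^{rs}$, which is symmetric in $r,s$, the two quadratic gradient terms coincide. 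This yields
\[
  \mathcal L\check h_i{}^j=-\check h_i{}^a(\mathcal L\hat h_a{}^b)\check h_b{}^j
  -2\lambda^\beta f(\rho)\alpha\sigma_k^{\alpha-1}\dot\sigma_k^{rs}
  \check h_i{}^a(\widetilde\nabla_r\hat h_a{}^b)\check h_b{}^p(\widetilde\nabla_s\hat h_p{}^q)\check h_q{}^j,
\]
which accounts for the third line of the claimed formula.

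Next I substitute \eqref{eq:full-normalized-weingarten-evolution} with indices $(a,b)$ in place of $(i,j)$, conjugate each term by $\check h$, and simplify term by term.

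\textbf{Gradient and radial terms.} The second-derivative terms in $\sigma_k$, the $\alpha(\alpha-1)$ term, the $f'$ cross terms, and the $f'\widetilde\nabla^2\rho$ and $f''\widetilde\nabla\rho\widetilde\nabla\rho$ terms carry over verbatim, each with a minus sign and with $\check h_i{}^a\cdots\check h_b{}^j$ attached.

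\textbf{Zeroth-order terms.} Here I use that $\check h$ commutes with $\hat h$:
\begin{itemize}
\item a term proportional to $\hat h_a{}^b$ becomes $-(\cdot)\check h_i{}^j$, since $\check h\hat h\check h=\check h$;
\item the term $(1-k\alpha)\sigma_k^\alpha(\hat h^2)_a{}^b$ becomes $(k\alpha-1)\sigma_k^\alpha\delta_i{}^j$, since $\check h\hat h^2\check h=\mathrm{Id}$;
\item the term $-\lambda^{\beta-2}f(1+k\alpha)\sigma_k^\alpha\delta_a{}^b$ becomes $+\lambda^{\beta-2}f(1+k\alpha)\sigma_k^\alpha(\check h^2)_i{}^j$;
\item the term $-\gamma_0\hat h_a{}^b$ becomes $+\gamma_0\check h_i{}^j$.
\end{itemize}
These match the last two lines of the claim exactly.

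\textbf{Index bookkeeping.} The only delicate point is the mixed-index convention: $\widetilde\nabla^b\hat h_{pq}$ versus $\widetilde\nabla_r\hat h_a{}^b$, with indices raised by $\widetilde g$. The connection is metric, so raising and lowering commute with $\widetilde\nabla$ and with $\partial_\tau$ up to the metric's evolution. The metric's evolution was already absorbed into \eqref{eq:full-normalized-weingarten-evolution}, which is stated for the mixed tensor, and the inverse identity above is likewise a purely mixed-tensor statement, so no extra terms arise. This convention check is the main obstacle, and it is routine. Collecting terms gives the claimed formula.
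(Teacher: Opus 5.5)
Your proposal is correct and follows the paper's proof essentially step for step. Like the paper, you differentiate $\check h\hat h=\mathrm{Id}$ and use the symmetry of $\dot F^{rs}$ to obtain $\mathcal L\check h=-\check h(\mathcal L\hat h)\check h-2\lambda^\beta f\dot F^{rs}\check h(\widetilde\nabla_r\hat h)\check h(\widetilde\nabla_s\hat h)\check h$. You then substitute \eqref{eq:full-normalized-weingarten-evolution} and simplify the zeroth-order terms via $\check h\hat h\check h=\check h$, $\check h\hat h^2\check h=\mathrm{Id}$ and $\check h\,\mathrm{Id}\,\check h=\check h^2$, and all your signs match the stated formula.
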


\begin{proof}
The defining identity for the inverse Weingarten map is
\begin{equation}\label{eq:lem26-inverse-weingarten-identity-hyperbolic}
  \check h_i{}^p\hat h_p{}^j=\delta_i{}^j.
\end{equation}
Differentiating \eqref{eq:lem26-inverse-weingarten-identity-hyperbolic}
with respect to $\tau$ and in the spatial directions gives
\begin{equation}\label{eq:lem26-first-derivatives-inverse-weingarten-hyperbolic}
  \begin{aligned}
  \partial_\tau\check h_i{}^j
  &=-\check h_i{}^p
    (\partial_\tau\hat h_p{}^q)\check h_q{}^j,\\
  \widetilde\nabla_r\check h_i{}^j
  &=-\check h_i{}^p
    (\widetilde\nabla_r\hat h_p{}^q)\check h_q{}^j.
  \end{aligned}
\end{equation}
Differentiating the second identity once more, we obtain
\begin{equation}\label{eq:lem26-second-derivative-inverse-weingarten-hyperbolic}
  \begin{aligned}
  \widetilde\nabla_s\widetilde\nabla_r\check h_i{}^j
  ={}&-\check h_i{}^p
      (\widetilde\nabla_s\widetilde\nabla_r\hat h_p{}^q)
      \check h_q{}^j\\
     &+\check h_i{}^a
      (\widetilde\nabla_s\hat h_a{}^b)\check h_b{}^p
      (\widetilde\nabla_r\hat h_p{}^q)\check h_q{}^j\\
     &+\check h_i{}^p
      (\widetilde\nabla_r\hat h_p{}^q)\check h_q{}^a
      (\widetilde\nabla_s\hat h_a{}^b)\check h_b{}^j.
  \end{aligned}
\end{equation}
Since the coefficient
$\lambda^\beta f(\rho)\alpha\sigma_k^{\alpha-1}
\dot\sigma_k^{rs}$ is symmetric in $r,s$, contracting
\eqref{eq:lem26-second-derivative-inverse-weingarten-hyperbolic} and
using \eqref{eq:lem26-first-derivatives-inverse-weingarten-hyperbolic}
gives
\begin{equation}\label{eq:lem26-inverse-weingarten-operator-identity-hyperbolic}
  \begin{aligned}
  \mathcal L\check h_i{}^j
  ={}&-\check h_i{}^p
      (\mathcal L\hat h_p{}^q)\check h_q{}^j\\
     &-2\lambda^\beta f(\rho)
       \alpha\sigma_k^{\alpha-1}\dot\sigma_k^{rs}
       \check h_i{}^a
       (\widetilde\nabla_r\hat h_a{}^b)\check h_b{}^p
       (\widetilde\nabla_s\hat h_p{}^q)\check h_q{}^j.
  \end{aligned}
\end{equation}
Finally, substitute \eqref{eq:full-normalized-weingarten-evolution}
into \eqref{eq:lem26-inverse-weingarten-operator-identity-hyperbolic}
and use
\[
  \check h_i{}^a\hat h_a{}^b\check h_b{}^j
  =\check h_i{}^j,
  \qquad
  \check h_i{}^a(\hat h^2)_a{}^b\check h_b{}^j
  =\delta_i{}^j,
  \qquad
  \check h_i{}^a\delta_a{}^b\check h_b{}^j
  =(\check h^2)_i{}^j.
\]
Collecting the resulting terms proves
\eqref{eq:full-inverse-weingarten-evolution-hyperbolic}.
\end{proof}
\section{The \texorpdfstring{$C^0$}{C0} estimate}
In this section, we derive the $C^0$ estimate for the normalized
flow~\eqref{eq:normalized-radial-flow-hat-h}. We first derive a useful growth estimate on $f$.
\begin{lemma}
\label{lem:model-profile-comparison-hyperbolic}
Assume that either
\eqref{eq:C0-supercritical-decomposition-hyperbolic} and
\eqref{eq:C0-supercritical-flatness-hyperbolic} hold, or
\eqref{eq:C0-critical-decomposition-hyperbolic} and
\eqref{eq:C0-critical-remainder-hyperbolic} hold.  Then, for every
$R_0\in(0,\infty)$, there exist constants $0<c\leq C<\infty$,
depending only on $f|_{(0,R_0]}$, such that
\begin{equation}\label{eq:model-profile-two-sided-hyperbolic}
  cr^\beta\leq f(r)\leq Cr^\beta,
  \qquad 0\leq r\leq R_0.
\end{equation}
\end{lemma}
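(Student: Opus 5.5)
The plan is to split $[0,R_0]$ into a small neighbourhood $[0,r_0]$ of the origin and the compact interval $[r_0,R_0]$. On $[r_0,R_0]$ the function $f$ is continuous and strictly positive, because $f(r)>0$ for $r>0$. Hence $f$ has a positive minimum and a finite maximum there, and $r^\beta$ is bounded above and below by positive constants. So $f(r)/r^\beta$ is bounded above and below on $[r_0,R_0]$. The whole content of the lemma is therefore the behaviour as $r\downarrow0$. It suffices to show $f(r)/r^\beta\to1$ there, or at least that it stays between two positive constants. Writing $f=\sinh^\beta r+g$ and using $\sinh^\beta r/r^\beta\to1$, this reduces to proving $g(r)=o(r^\beta)$ as $r\downarrow0$.

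In the critical case this is immediate: \eqref{eq:C0-critical-remainder-hyperbolic} gives $|g(r)|\leq Cr^{\beta+\delta}$ for small $r$, so $g(r)/r^\beta=O(r^\delta)\to0$.

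In the supercritical case I would use Taylor's theorem. Since $g=f-\sinh^\beta r$ is continuous at $0$ and $f(0)=0=\sinh^\beta 0$, we have $g(0)=0$. Together with $g\in C^{m+1}([0,\infty))$ and $g^{(j)}(0)=0$ for $1\leq j\leq m$, Taylor's theorem with remainder of order $m+1$ gives
\[
  |g(r)|\leq \frac{\sup_{[0,r_0]}|g^{(m+1)}|}{(m+1)!}\,r^{m+1}.
\]
Because $m=\lfloor\beta\rfloor$, we have $m+1>\beta$, so $g(r)/r^\beta=O(r^{m+1-\beta})\to0$.

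In both cases we can then choose $r_0>0$ so that $\tfrac12 r^\beta\leq f(r)\leq 2r^\beta$ on $(0,r_0]$; the inequality is trivial at $r=0$. Combining this with the compact-interval bound gives \eqref{eq:model-profile-two-sided-hyperbolic}, with constants $c=\min(\tfrac12,\min_{[r_0,R_0]}f/r^\beta)$ and $C=\max(2,\max_{[r_0,R_0]}f/r^\beta)$. These depend only on $f$ restricted to $(0,R_0]$, through $r_0$ and the quantities above.

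The only delicate point is the bookkeeping at the origin. One must check that $g(0)=0$ follows from $f(0)=0$, and that the Taylor order $m+1$ strictly exceeds $\beta$; this holds even when $\beta$ is an integer, since $\lfloor\beta\rfloor+1>\beta$ always. No other obstacle arises.
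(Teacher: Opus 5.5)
Your proposal is correct and follows the paper's proof. Both show $g(r)=o(r^\beta)$: in the supercritical case via Taylor's theorem, using $g(0)=0$, flatness through order $m$, and $m+1>\beta$; in the critical case directly from $g=O(r^{\beta+\delta})$. This gives $f(r)/r^\beta\to1$; the paper then extends this ratio continuously to $[0,R_0]$ and uses compactness once, rather than splitting off $[0,r_0]$ explicitly as you do, which is only a cosmetic difference.
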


\begin{proof}
Suppose first that
\eqref{eq:C0-supercritical-decomposition-hyperbolic} and
\eqref{eq:C0-supercritical-flatness-hyperbolic} hold.  Then
$m=\lfloor\beta\rfloor$ and $g(0)=0$.
Since $g\in C^{m+1}$ and $g^{(j)}(0)=0$ for $j=0,\ldots,m$,
Taylor's theorem gives
\[
  g(r)=o(r^{\beta}).
\]
If instead \eqref{eq:C0-critical-decomposition-hyperbolic} and
\eqref{eq:C0-critical-remainder-hyperbolic} hold, then
\eqref{eq:C0-critical-remainder-hyperbolic} directly gives
\[
  \frac{g(r)}{r^\beta}=O(r^\delta)\longrightarrow0.
\]
Thus, in either case,
\[
  \frac{f(r)}{r^\beta}
  =\left(\frac{\sinh r}{r}\right)^\beta
   +\frac{g(r)}{r^\beta}
  \longrightarrow1
  \qquad\text{as }r\downarrow0.
\]
Therefore, the ratio $\frac{f(r)}{r^\beta}$ is a positive continuous
function on $[0,R_0]$. Hence it attains a positive minimum and a
finite maximum on this interval.
\end{proof}

\begin{lemma}
\label{lem:critical-profile-hyperbolic}
Assume that \eqref{eq:C0-critical-decomposition-hyperbolic} and
\eqref{eq:C0-critical-remainder-hyperbolic} hold.  For
$r\in(0,\infty)$, define
\begin{equation}\label{eq:critical-profile-factor-hyperbolic}
  A(r):=\frac{f(r)}{r}\coth^{k\alpha}r
\end{equation}
and
\begin{equation}
  E(r) := A(r) - 1.
\end{equation}
Then, for every $R_0\in(0,\infty)$, set
$\delta_0:=\min\{\delta,2\}>0$.  There are constants
$0<a\leq b<\infty$ and $C>0$, depending only on $f|_{(0,R_0]}$,
such that
\begin{equation}\label{eq:critical-profile-error-hyperbolic}
  a\leq A(r)\leq b,
  \qquad
  |E(r)|\leq Cr^{\delta_0},
  \qquad 0<r\leq R_0.
\end{equation}
Consequently, if $z:[0,T)\to(0,R_0]$ satisfies
\[
  0<z(t)\leq C_1e^{-c_1t},
  \qquad 0\leq t<T,
\]
for some $C_1,c_1>0$, then
\begin{equation}\label{eq:critical-profile-composition-integrable-hyperbolic}
  \int_0^T |E(z(t))|\,dt
  \leq\frac{CC_1^{\delta_0}}{c_1\delta_0}.
\end{equation}
\end{lemma}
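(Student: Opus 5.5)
We prove the three assertions in turn: first write $A$ in the critical decomposition, then expand it near $0$, then integrate along $z$.

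\textbf{Decomposition of $A$.} Since $\beta=1+k\alpha$, the critical decomposition \eqref{eq:C0-critical-decomposition-hyperbolic} gives
\[
  A(r)=\frac{\sinh^{1+k\alpha}r+g(r)}{r}\,\frac{\cosh^{k\alpha}r}{\sinh^{k\alpha}r}
  =\frac{\sinh r}{r}\cosh^{k\alpha}r+\frac{g(r)}{r}\coth^{k\alpha}r .
\]
Hence
\[
  E(r)=\Bigl(\frac{\sinh r}{r}\cosh^{k\alpha}r-1\Bigr)+\frac{g(r)}{r}\coth^{k\alpha}r .
\]

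\textbf{Bound on $|E|$.} The first bracket is a smooth even function of $r$ vanishing at $0$. It is therefore bounded by $C r^2$ on $(0,R_0]$, with $C$ depending on $R_0$, $k$ and $\alpha$.

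For the second term, \eqref{eq:C0-critical-remainder-hyperbolic} gives $r_1>0$ and $C$ with $|g(r)|\le Cr^{1+k\alpha+\delta}$ for $0<r\le r_1$. On this range $\coth^{k\alpha}r\le C r^{-k\alpha}$, so the second term is bounded by $C r^{\delta}$.

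On $[r_1,R_0]$ the term $g(r)r^{-1}\coth^{k\alpha}r$ is continuous, hence bounded by a constant. This constant is $\le C' r_1^{-\delta_0} r^{\delta_0}$ there.

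Combining, and using $r^2,r^\delta\le C r^{\delta_0}$ on $(0,R_0]$ (valid since $\delta_0\le\min\{\delta,2\}$ and $r$ is bounded), we get $|E(r)|\le Cr^{\delta_0}$.

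\textbf{Two-sided bound on $A$.} We have $A(r)=\bigl(f(r)/r^\beta\bigr)\,(r/\tanh r)^{k\alpha}$. Lemma~\ref{lem:model-profile-comparison-hyperbolic} gives $c\le f(r)/r^\beta\le C$ on $(0,R_0]$. The factor $(r/\tanh r)^{k\alpha}$ is continuous and positive on $[0,R_0]$, with value $1$ at $0$, so it is bounded above and below by positive constants. Multiplying yields $a\le A\le b$.

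\textbf{Integral estimate.} By the bound on $E$, $|E(z(t))|\le C z(t)^{\delta_0}\le C C_1^{\delta_0}e^{-c_1\delta_0 t}$. Integrating over $[0,T)$, and bounding by the integral over $[0,\infty)$, gives \eqref{eq:critical-profile-composition-integrable-hyperbolic}.

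The only mildly delicate point is that the remainder hypothesis is purely asymptotic. This is handled by splitting $(0,R_0]$ at $r_1$ and using continuity of $g$ on the compact piece $[r_1,R_0]$. All constants depend only on $f|_{(0,R_0]}$, together with $k$, $\alpha$ and $\delta$, which are fixed structural data.
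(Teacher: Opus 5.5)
Your proof is correct and follows essentially the same route as the paper. It uses the same splitting $E(r)=\bigl(\frac{\sinh r}{r}\cosh^{k\alpha}r-1\bigr)+\frac{g(r)}{r}\coth^{k\alpha}r$, the $O(r^2)$ and remainder bounds for the two pieces, Lemma~\ref{lem:model-profile-comparison-hyperbolic} with bounds on $r\coth r$ for $a\leq A\leq b$, and direct integration of $CC_1^{\delta_0}e^{-c_1\delta_0 t}$; your split of $(0,R_0]$ at $r_1$ simply makes explicit the step, left implicit in the paper, of extending the asymptotic remainder bound to the whole interval.
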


\begin{proof}
The bounds for $A$ follow directly from
Lemma~\ref{lem:model-profile-comparison-hyperbolic} and the positive
lower and upper bounds for $r\coth r$ on $(0,R_0]$. By~\eqref{eq:C0-critical-decomposition-hyperbolic}, we have
\[
  E(r)
  =\frac{\sinh r}{r}\cosh^{k\alpha}r-1
   +\frac{g(r)}{r}\coth^{k\alpha}r.
\]
Hence,
\begin{equation*}
  \begin{split}
     \frac{|E(r)|}{r^{\delta_0}}\leq{} & \frac{|(\sinh r/r)\cosh^{k\alpha}r-1|}{r^{\delta_0}} + \frac{|g(r)|}{r^{1+\delta_0}}\coth^{k\alpha}r \\
       \leq &  Cr^{2-\delta_0} + Cr^{k\alpha+\delta-\delta_0}\coth^{k\alpha}r
  \end{split}
\end{equation*}
where we used~\eqref{eq:C0-critical-remainder-hyperbolic} and
$(\sinh r/r)\cosh^{k\alpha}r=1+O(r^2)$. Therefore, by our choice of
$\delta_0$, the ratio $\frac{|E(r)|}{r^{\delta_0}}$ is bounded on the
interval $(0,R_0]$.

Finally,
\[
  |E(z(t))|
  \leq CC_1^{\delta_0}e^{-c_1\delta_0t},
\]
which proves \eqref{eq:critical-profile-composition-integrable-hyperbolic}.
\end{proof}

We formulate the $C^0$ estimate as follows.  It is a conditional
estimate in the sense of Section~\ref{subsec:normalized-flow}: it is
derived on any time interval on which the solution remains a radial
graph over $\mathbb S^n$ centred at $o$---equivalently, on which the
evolving hypersurface remains strictly $h$-convex and encloses
$o$---and it does not by itself assert that this representation
persists.  The bootstrap closing the maximal such interval is carried
out at the beginning of Section~\ref{sec:C1-estimate}  and in
Lemma~\ref{lem:h-convexity-and-lower-curvature-hyperbolic}.

\begin{lemma}[$C^0$ estimate]
\label{lem:normalized-C0-estimate-hyperbolic}
Under the assumptions of either Theorem~\ref{thm:main} or
Theorem~\ref{thm:main-critical}, there exist positive constants $c_0$ and
$C_0$, depending only on the initial radial bounds and $f$, such that
\begin{equation}\label{eq:normalized-C0-estimate-hyperbolic}
  0<c_0\leq\widetilde\rho(\theta,\tau)\leq C_0.
\end{equation}
\end{lemma}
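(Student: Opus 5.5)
The plan is to compare the evolving hypersurface with centred geodesic spheres via the maximum principle on the scalar equation \eqref{eq:radial-flow}, and then translate the resulting bounds on $\rho$ into bounds on $\widetilde\rho=\lambda\rho$. At a spatial maximum of $\rho(\cdot,t)$ we have $\nabla\rho=0$, so $v=1$, and by \eqref{eq:weingarten-map-rho} the Weingarten map satisfies $h\geq\coth\rho\,\mathrm{Id}$, because $-\rho_{ij}\geq0$ there. Hence $\sigma_k^\alpha(h)\geq\gamma_0\coth^{k\alpha}\rho$. At a spatial minimum the inequalities reverse, giving $\sigma_k^\alpha(h)\leq\gamma_0\coth^{k\alpha}\rho$, and $h[\rho]$ remains in $\Gamma_k^+$ as long as the solution stays strictly $h$-convex. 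Writing $\rho_{\max}(t)$ and $\rho_{\min}(t)$, and using the Dini-derivative convention, we obtain
\[
D^+\rho_{\max}\leq-\gamma_0 f(\rho_{\max})\coth^{k\alpha}\rho_{\max},\qquad
D^+\rho_{\min}\geq-\gamma_0 f(\rho_{\min})\coth^{k\alpha}\rho_{\min}.
\]
So $\rho_{\max}$ is nonincreasing and stays below $R_0:=\max\rho_0$, and $\rho_{\min}$ is bounded below by the solution $r(t)$ of the sphere ODE $r'=-\gamma_0 f(r)\coth^{k\alpha}r$. On $(0,R_0]$ the model bounds of Lemma~\ref{lem:model-profile-comparison-hyperbolic} then apply.

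\emph{Supercritical case.} Here $f(r)\coth^{k\alpha}r=r^{\beta-k\alpha}\,\psi(r)$, where $\psi$ is positive and continuous on $[0,R_0]$ with $\psi(0)=1$. The quantity $\lambda$ solves $\lambda'=\gamma_0\lambda^{2+k\alpha-\beta}$, and $1/\lambda$ solves exactly $y'=-\gamma_0y^{\beta-k\alpha}$ with $y(0)=1$. Set $\mu=\beta-k\alpha-1$ and $w=\rho^{-\mu}$. The differential inequalities give $D^+w_{\max}\leq\mu\gamma_0\psi(\rho_{\min})$ and $D^+w_{\min}\geq\mu\gamma_0\psi(\rho_{\max})$, with $w_{\max}=\rho_{\min}^{-\mu}$ and $w_{\min}=\rho_{\max}^{-\mu}$, while $\lambda^{\mu}=1+\mu\gamma_0t$. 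The crude bounds $c\leq\psi\leq C$ only give $\widetilde\rho$ comparable to $(\lambda^\mu/w)^{1/\mu}$ up to a constant factor, which is exactly what the $C^0$ estimate requires: $w_{\max}\leq\rho_{0,\min}^{-\mu}+C\mu\gamma_0 t$ gives $\widetilde\rho_{\min}\geq c_0$, and $w_{\min}\geq R_0^{-\mu}+c\mu\gamma_0t$ gives $\widetilde\rho_{\max}\leq C_0$. Both use $\lambda^\mu=1+\mu\gamma_0t$, and the constants depend only on the initial radial bounds and $f$.

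\emph{Critical case.} Here $f(r)\coth^{k\alpha}r=r\,A(r)$ with $A=1+E$ as in Lemma~\ref{lem:critical-profile-hyperbolic}. The inequalities become
\[
D^+\log\rho_{\max}\leq-\gamma_0(1+E(\rho_{\max})),\qquad
D^+\log\rho_{\min}\geq-\gamma_0(1+E(\rho_{\min})),
\]
so, since $\log\lambda=\gamma_0t$,
\[
|\log\widetilde\rho_{\max/\min}(t)-\log\rho_{\max/\min}(0)|\leq\gamma_0\int_0^t|E(\rho_{\max/\min})|\,dt.
\]
Using only $a\leq A\leq b$, the inequality for $\rho_{\max}$ already gives $\rho_{\max}\leq R_0e^{-\gamma_0at}$. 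This exponential decay, fed into \eqref{eq:critical-profile-composition-integrable-hyperbolic}, bounds $\int_0^\infty|E(\rho_{\max})|$, which yields $C_0$. For $\rho_{\min}\leq\rho_{\max}$ the same decay bound holds, so $\int|E(\rho_{\min})|$ is bounded as well, which yields $c_0$.

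The main obstacle is the critical case. Crude two-sided bounds on $A$ do not suffice there, since they would give $\widetilde\rho$ bounds growing or decaying like $e^{\gamma_0(1-a)t}$. The argument must therefore first extract exponential decay of $\rho$ in original time and then use the integrability of $E$ from Lemma~\ref{lem:critical-profile-hyperbolic}. A secondary point needing care is justifying $h\geq\coth\rho\,\mathrm{Id}$ at extrema and staying within $\Gamma_k^+$, which is where the conditional hypothesis on the time interval enters.
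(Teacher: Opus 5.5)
Your proposal is correct and follows the paper's proof. The critical case is identical: exponential decay of $\rho_{\max}$ from $A\geq a$, then integrability of $E$ via Lemma~\ref{lem:critical-profile-hyperbolic}. In the supercritical case, your substitution $w=\rho^{-\mu}$ in original time just integrates explicitly the logistic inequalities for $\widetilde\rho_\pm$ that the paper derives in normalized time, and it gives the same constants.

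One small correction: your displayed two-sided bound $|\log\widetilde\rho_{\max/\min}(t)-\log\rho_{\max/\min}(0)|\leq\gamma_0\int_0^t|E|$ does not follow from one-sided Dini inequalities. You only get the upper bound for $\widetilde\rho_{\max}$ and the lower bound for $\widetilde\rho_{\min}$, but those are exactly the halves the estimate needs.
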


\begin{proof}
The scalar equation \eqref{eq:radial-flow} gives
$\partial_t\rho<0$ and hence $0<\rho(\theta,t)\leq R_0$ with $R_0$ the initial radial bound.

\begin{itemize}
  \item Suppose first that
  \eqref{eq:C0-supercritical-decomposition-hyperbolic} and
  \eqref{eq:C0-supercritical-flatness-hyperbolic} hold.  Define
  \begin{equation}\label{eq:normalized-radial-extrema-hyperbolic}
    \widetilde\rho_-(\tau)
    :=\min_{\mathbb S^n}\widetilde\rho(\cdot,\tau),
    \qquad
    \widetilde\rho_+(\tau)
    :=\max_{\mathbb S^n}\widetilde\rho(\cdot,\tau).
  \end{equation}
  At a spatial maximum of $\widetilde\rho$, one has $v=1$ and
  $\nabla^2\widetilde\rho\leq0$.  Since $\lambda$ is spatially
  constant, \eqref{eq:weingarten-map-rho} and
  $\hat h_i{}^j=\lambda^{-1}h_i{}^j$ give
  \begin{equation}\label{eq:normalized-maximum-curvature-comparison-hyperbolic}
    \hat h_i{}^j
    \geq\lambda^{-1}\coth
      \frac{\widetilde\rho_+}{\lambda}\,\delta_i{}^j.
  \end{equation}
  Similarly, at a spatial minimum,
  \begin{equation}\label{eq:normalized-minimum-curvature-comparison-hyperbolic}
    \hat h_i{}^j
    \leq\lambda^{-1}\coth
      \frac{\widetilde\rho_-}{\lambda}\,\delta_i{}^j.
  \end{equation}
  Applying the maximum principle directly to
  \eqref{eq:normalized-radial-flow-hat-h} (with the time derivatives
  of the extrema understood in the sense of the convention stated in
  the Notation section) and setting
  $\mu:=\beta-k\alpha-1>0$, we obtain
  \begin{equation}\label{eq:normalized-extrema-inequalities-hyperbolic}
    \begin{aligned}
    \frac{d}{d\tau}\widetilde\rho_+
    &\leq\gamma_0\widetilde\rho_+
     -\gamma_0\lambda^{1+\mu}
      f\left(\frac{\widetilde\rho_+}{\lambda}\right)
      \coth^{k\alpha}\left(\frac{\widetilde\rho_+}{\lambda}\right),\\
    \frac{d}{d\tau}\widetilde\rho_-
    &\geq\gamma_0\widetilde\rho_-
     -\gamma_0\lambda^{1+\mu}
      f\left(\frac{\widetilde\rho_-}{\lambda}\right)
      \coth^{k\alpha}\left(\frac{\widetilde\rho_-}{\lambda}\right).
    \end{aligned}
  \end{equation}
 By
  \eqref{eq:model-profile-two-sided-hyperbolic} and the
  bounds for $r\coth r$ on $(0,R_0]$, there exist constants
  $0<a_0<a_1$, depending only on $f$ and $R_0$, such that
\begin{equation}\label{eq:normalized-logistic-coefficient-hyperbolic}
  a_0\leq
  \frac{f(r)}{r^\beta}(r\coth r)^{k\alpha}
  \leq a_1,
  \qquad 0<r\leq R_0.
\end{equation}
Consequently, \eqref{eq:normalized-extrema-inequalities-hyperbolic} becomes
\begin{equation}\label{eq:normalized-logistic-inequalities-hyperbolic}
  \begin{aligned}
  \frac{d}{d\tau}\widetilde\rho_+
  &\leq\gamma_0\widetilde\rho_+
       (1-a_0\widetilde\rho_+^\mu),\\
  \frac{d}{d\tau}\widetilde\rho_-
  &\geq\gamma_0\widetilde\rho_-
       (1-a_1\widetilde\rho_-^\mu).
  \end{aligned}
\end{equation}
Therefore the standard comparison principle gives
\begin{equation}\label{eq:normalized-C0-supercritical-hyperbolic}
  \min\{\widetilde\rho_-(0),a_1^{-1/\mu}\}
  \leq\widetilde\rho(\theta,\tau)
  \leq\max\{\widetilde\rho_+(0),a_0^{-1/\mu}\}.
\end{equation}
  \item Suppose next that
  \eqref{eq:C0-critical-decomposition-hyperbolic} and
  \eqref{eq:C0-critical-remainder-hyperbolic} hold.  In this case
  $\mu=0$ and $\tau=t$.  Define the extrema of the unnormalized radial
  function by
  \[
    \rho_-(t):=\min_{\mathbb S^n}\rho(\cdot,t),
    \qquad
    \rho_+(t):=\max_{\mathbb S^n}\rho(\cdot,t),
  \]
  Then $\widetilde\rho_\pm=\lambda\rho_\pm$.  Let $A$ and $E$ be
  the functions defined in Lemma~\ref{lem:critical-profile-hyperbolic}.
  
  At a spatial maximum of $\rho$, one has $\nabla\rho=0$ and
  $\nabla^2\rho\leq0$, and hence
  \[
    h_i{}^j[\rho]\geq\coth\rho_+\,\delta_i{}^j.
  \]
  Applying \eqref{eq:radial-flow} at every differentiability time of
  $\rho_+$ gives
  \begin{equation}\label{eq:critical-unnormalized-maximum-hyperbolic}
    \frac{d}{dt}\rho_+
    \leq-\gamma_0 f(\rho_+)\coth^{k\alpha}\rho_+
    =-\gamma_0A(\rho_+)\rho_+.
  \end{equation}
  By Lemma~\ref{lem:critical-profile-hyperbolic}, $A\geq a>0$ on
  $(0,R_0]$.  Thus the ODE comparison yields
  \begin{equation}\label{eq:critical-unnormalized-extrema-decay-hyperbolic}
    0<\rho_+\leq\rho_+(0)e^{-a\gamma_0t}.
  \end{equation}
  The last assertion of Lemma~\ref{lem:critical-profile-hyperbolic} now
  gives
  \begin{equation}\label{eq:critical-upper-error-integrable-hyperbolic}
    \int_0^T |E(\rho_+(t))|\,dt\leq C,
  \end{equation}
  where $C$ is independent of $T$.
  Since $\lambda_t=\gamma_0\lambda$, equation
  \eqref{eq:critical-unnormalized-maximum-hyperbolic} implies
  \begin{equation}\label{eq:critical-lambda-maximum-comparison-hyperbolic}
    \frac{d}{dt}\log(\lambda\rho_+)
    \leq\gamma_0\bigl(1-A(\rho_+)\bigr)
    =-\gamma_0E(\rho_+)
    \leq\gamma_0|E(\rho_+)|.
  \end{equation}
  Integrating and using
  \eqref{eq:critical-upper-error-integrable-hyperbolic}, we obtain
  \[
    \widetilde\rho_+(t)=\lambda(t)\rho_+(t)
    \leq\rho_+(0)
      \exp\left(
        \gamma_0\int_0^T|E(\rho_+(s))|\,ds
      \right)
    \leq C_0.
  \]

  We next estimate the minimum radius.  At a spatial minimum of
  $\rho$, one has $\nabla\rho=0$ and $\nabla^2\rho\geq0$, and hence
  \[
    h_i{}^j[\rho]\leq\coth\rho_-\,\delta_i{}^j.
  \]
  Therefore \eqref{eq:radial-flow} gives, at every differentiability
  time of $\rho_-$,
  \begin{equation}\label{eq:critical-unnormalized-minimum-hyperbolic}
    \frac{d}{dt}\rho_-
    \geq-\gamma_0 f(\rho_-)\coth^{k\alpha}\rho_-
    =-\gamma_0A(\rho_-)\rho_-.
  \end{equation}
  Since $0<\rho_-\leq\rho_+$,
  \eqref{eq:critical-unnormalized-extrema-decay-hyperbolic} and the last
  assertion of Lemma~\ref{lem:critical-profile-hyperbolic} imply
  \begin{equation}\label{eq:critical-lower-error-integrable-hyperbolic}
    \int_0^T |E(\rho_-(t))|\,dt\leq C,
  \end{equation}
  where $C$ is independent of $T$.
  Combining \eqref{eq:critical-unnormalized-minimum-hyperbolic} with
  $\lambda_t=\gamma_0\lambda$, we obtain
  \begin{equation}\label{eq:critical-lambda-minimum-comparison-hyperbolic}
    \frac{d}{dt}\log(\lambda\rho_-)
    \geq\gamma_0\bigl(1-A(\rho_-)\bigr)
    =-\gamma_0E(\rho_-)
    \geq-\gamma_0|E(\rho_-)|.
  \end{equation}
  Integrating and using
  \eqref{eq:critical-lower-error-integrable-hyperbolic}, we obtain
  \[
    \widetilde\rho_-(t)=\lambda(t)\rho_-(t)
    \geq\rho_-(0)
      \exp\left(
        -\gamma_0\int_0^T|E(\rho_-(s))|\,ds
      \right)
    \geq c_0>0.
  \]
  Consequently,
  \[
    c_0\leq\widetilde\rho_-(t)
    \leq\widetilde\rho(\theta,t)
    \leq\widetilde\rho_+(t)\leq C_0,
  \]
  which completes the proof of
  \eqref{eq:normalized-C0-estimate-hyperbolic} in the critical case.
\end{itemize}
\end{proof}

\begin{corollary}\label{cor:lambda-sin-rho-two-sided-hyperbolic}
Under the assumptions of either Theorem~\ref{thm:main} or
Theorem~\ref{thm:main-critical},
there exist constants $c_1,C_1>0$ such that, on any time interval on
which the normalized $C^0$ estimate
\eqref{eq:normalized-C0-estimate-hyperbolic} holds,
\[
  c_1\leq\lambda\sinh\rho\leq C_1.
\]
\end{corollary}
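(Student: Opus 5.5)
The plan is to combine the normalized $C^0$ estimate of Lemma~\ref{lem:normalized-C0-estimate-hyperbolic} with the elementary two-sided comparison $r\leq\sinh r\leq \frac{\sinh R_0}{R_0}\,r$ for $0\leq r\leq R_0$. Here $R_0$ denotes the initial radial bound. At every point and time we have $\lambda\sinh\rho=\widetilde\rho\cdot\frac{\sinh\rho}{\rho}$, since $\widetilde\rho=\lambda\rho$. It therefore suffices to bound the ratio $\sinh\rho/\rho$ from above and below on the relevant range of $\rho$.

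First, recall from the proof of Lemma~\ref{lem:normalized-C0-estimate-hyperbolic} that \eqref{eq:radial-flow} gives $\partial_t\rho<0$ as long as the solution is a strictly $h$-convex radial graph. Hence $0<\rho(\theta,t)\leq R_0:=\max_{\mathbb S^n}\rho_0$ on the whole interval under consideration. The function $r\mapsto\sinh r/r$ is increasing on $(0,\infty)$ and tends to $1$ as $r\downarrow0$. It follows that
\[
  1\leq\frac{\sinh\rho}{\rho}\leq\frac{\sinh R_0}{R_0}.
\]
Multiplying by $\widetilde\rho$ and using $c_0\leq\widetilde\rho\leq C_0$ yields
\[
  c_0\leq\lambda\sinh\rho\leq C_0\frac{\sinh R_0}{R_0}.
\]
So we may take $c_1:=c_0$ and $C_1:=C_0\sinh R_0/R_0$. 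Both constants depend only on the initial radial bounds and on $f$, as in the lemma.

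There is no real obstacle. The only point requiring care is that the upper bound $\rho\leq R_0$ must be available on the same time interval on which \eqref{eq:normalized-C0-estimate-hyperbolic} holds. This is guaranteed by the monotonicity $\partial_t\rho<0$, which holds wherever the radial-graph representation and admissibility ($\sigma_k>0$, $f>0$) persist. An alternative avoiding monotonicity is to note $\rho=\widetilde\rho/\lambda\leq C_0$, because $\lambda\geq1$ for $t\geq0$ in both regimes by \eqref{eq:lambda-definition}. This gives the same conclusion with $R_0$ replaced by $C_0$.
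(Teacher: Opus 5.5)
Your proof is correct and is essentially the paper's argument: both use $\rho\le\sinh\rho\le\frac{\sinh R_0}{R_0}\rho$ for $0<\rho\le R_0$ (with $\rho\le R_0$ coming from $\partial_t\rho<0$), multiply by $\lambda$, and invoke $c_0\le\widetilde\rho\le C_0$. Your alternative bound $\rho=\widetilde\rho/\lambda\le C_0$, using $\lambda\ge1$, is also valid, and it makes the corollary rely only on the normalized $C^0$ estimate, exactly as its hypothesis is phrased.
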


\begin{proof}
Since $0<\rho\leq R_0$, it follows that
\[
  \rho\leq\sinh\rho\leq\frac{\sinh R_0}{R_0}\rho.
\]
Multiplying by $\lambda$ and using
\eqref{eq:normalized-C0-estimate-hyperbolic}, we obtain
\[
  c_0
  \leq\lambda\sinh\rho
  \leq\frac{\sinh R_0}{R_0}C_0,
\]
which proves the assertion.
\end{proof}

\begin{remark}
For the $C^0$ estimate alone, the assumptions on $f$ in the
supercritical case can be weakened. It is enough to assume
\[
  0<\liminf_{r\downarrow0}\frac{f(r)}{r^\beta}
  \leq
  \limsup_{r\downarrow0}\frac{f(r)}{r^\beta}
  <\infty.
\]
Indeed, since $f$ is positive and continuous away from the origin,
these asymptotic bounds give the two-sided estimate
\eqref{eq:model-profile-two-sided-hyperbolic} on every fixed interval
$(0,R_0]$.

In the critical case $\beta=1+k\alpha$, the present argument requires
the stronger decomposition
\[
  f(r)=\sinh^{1+k\alpha}r+g(r),
  \qquad
  g(r)=O\bigl(r^{1+k\alpha+\delta}\bigr)
\]
for some $\delta>0$.  Thus the remainder $g$ must decay strictly faster
than the leading order $r^{1+k\alpha}$ as $r\downarrow0$.  This positive
power gain makes the error $E(\rho_\pm(t))$ integrable in time.  The
weaker condition $g=o(r^{1+k\alpha})$ alone does not, in general,
guarantee this integrability.
\end{remark}

\section{The \texorpdfstring{$C^1$}{C1} estimate}
\label{sec:C1-estimate}

The strict $h$-convexity assumption is imposed only on the initial
hypersurface.  Recall that $T_{\max}\in(0,\infty]$ denotes the maximal
existence time of the smooth solution of \eqref{eq:unnormalized-flow}
(see Section~\ref{subsec:normalized-flow}), and write
\[
  \tau_{\max}:=\tau(T_{\max})
\]
for the corresponding normalized time, with $\tau$ as in
\eqref{eq:normalized-time}.  By continuity, there exists a maximal
normalized time interval $[0,\tau_h)$, $\tau_h\in(0,\tau_{\max}]$,
on which the evolving hypersurface remains strictly $h$-convex
{and} encloses the fixed point $o$:
\begin{equation}\label{eq:enclosure-condition}
  o\in\operatorname{int}\Omega_\tau,
\end{equation}
where $\Omega_\tau\Subset\mathbb H^{n+1}$ denotes the domain enclosed
by $M_{t(\tau)}$.  Both conditions are needed at this stage: the
radial-graph representation and the preceding $C^0$ estimates require
\eqref{eq:enclosure-condition}, while strict $h$-convexity alone does
not establish enclosure of $o$.
Lemma~\ref{lem:h-convexity-and-lower-curvature-hyperbolic} below
shows that $\tau_h=\tau_{\max}$, and
Proposition~\ref{prop:C2-to-Cinfty-normalized-hyperbolic} then shows
that $\tau_{\max}=\infty$.  Recall the hyperbolic support function
$u=\sinh\rho/v$ from \eqref{eq:radial-support-function} and its
normalized counterpart $\widetilde u=\lambda u$ from
\eqref{eq:preliminary-normalized-support-hyperbolic}.

The estimates up to and including
Lemma~\ref{lem:normalized-speed-lower-bound-hyperbolic} are, at this
stage, conditional estimates on $[0,\tau_h)$.  Thus the use of strict
$h$-convexity in those estimates comes from the definition of
$\tau_h$, and is not an assertion that preservation has already been
proved.

Enclosure cannot be the property that terminates the interval.
Indeed, on $[0,\tau_h)$ the normalized $C^0$ estimate of
Lemma~\ref{lem:normalized-C0-estimate-hyperbolic} applies and gives
\begin{equation}\label{eq:enclosure-lower-bound}
  \min_{\mathbb S^n}\rho\bigl(\cdot,t(\tau)\bigr)
  \geq\frac{c_0}{\lambda(\tau)}
  =c_0e^{-\gamma_0\tau},
\end{equation}
where the equality uses $\lambda=e^{\gamma_0\tau}$ from
\eqref{eq:time-and-lambda-identities}.  At a finite endpoint
$\tau_h<\tau_{\max}$ the solution is smooth and this lower bound
stays positive, so the boundary cannot reach $o$; enclosure therefore
persists past $\tau_h$ and cannot terminate the maximal interval.
The first-contact argument in
Lemma~\ref{lem:h-convexity-and-lower-curvature-hyperbolic} then
excludes the loss of strict $h$-convexity at $\tau_h$, which closes
the bootstrap.  Its first-contact proof uses neither the lower speed
estimate nor any of the subsequent curvature estimates.

\begin{lemma}[$C^1$ estimate on the maximal interval $[0,\tau_h)$]
\label{lem:normalized-C1-estimate-hyperbolic}
Under the assumptions of either Theorem~\ref{thm:main} or
Theorem~\ref{thm:main-critical}, there exist constants $c,C>0$,
depending only on the $C^0$ bound of
Lemma~\ref{lem:normalized-C0-estimate-hyperbolic} and independent of
$\tau_h$, such that on $\mathbb S^n\times [0,\tau_h)$,
\begin{equation}\label{eq:normalized-C1-estimate-hyperbolic}
  v\leq C,
  \qquad
  \widetilde u\geq c,
  \qquad
  |\nabla\widetilde\rho|\leq C.
\end{equation}
\end{lemma}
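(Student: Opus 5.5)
The plan is to get the gradient bound directly from the radial support estimate of Lemma~\ref{lem:global-supporting-horosphere} rather than from a maximum-principle computation. On $[0,\tau_h)$ each $M_{t(\tau)}$ is strictly $h$-convex and encloses $o$ by definition of $\tau_h$. By the hyperbolic Hadamard--Stoker theorem it is embedded and bounds a domain $\Omega_\tau$ with $o\in\operatorname{int}\Omega_\tau$. So the lemma applies at every point and time and gives
\[
  \frac1v=\langle\partial_\rho,\nu\rangle>\tanh\frac\rho2 .
\]
This alone is not uniform, since $\rho\to0$. Instead I will use convexity of $\Omega_\tau$ together with the two-sided radial bounds, which is the classical argument for star-shaped convex bodies. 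Ordinary geodesic convexity follows from $\kappa_i>1>0$.

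Fix $X\in M_{t(\tau)}$ with $\rho=\rho(X)$, and let $\rho_-=\min_{\mathbb S^n}\rho(\cdot,t(\tau))$. The geodesic ball $B(o,\rho_-)$ lies in $\overline{\Omega_\tau}$. The totally geodesic tangent hyperplane $P_X$ at $X$ supports $\Omega_\tau$, so it does not meet the open ball $B(o,\rho_-)$. Hence $d(o,P_X)\geq\rho_-$. Consider the right geodesic triangle formed by $o$, $X$ and the foot of the perpendicular from $o$ to $P_X$. Hyperbolic trigonometry gives
\[
  \sinh d(o,P_X)=\sinh\rho\,\langle\partial_\rho,\nu\rangle=u .
\]
Therefore $u\geq\sinh\rho_-$, and consequently
\[
  \widetilde u=\lambda u\geq\lambda\sinh\rho_-\geq\lambda\rho_-=\widetilde\rho_-\geq c_0
\]
by Lemma~\ref{lem:normalized-C0-estimate-hyperbolic}. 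This is the second inequality. The step I expect to need the most care is this trigonometric identity and the sign conventions for $\nu$ and the support function. Checking $\langle\partial_\rho,\nu\rangle>0$ and the right-triangle relation $\sinh(\text{leg})=\sinh(\text{hyp})\sin(\text{angle})$ should settle it.

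Next, $v=\sinh\rho/u$. Since $\rho\leq\widetilde\rho_+/\lambda\leq C_0/\lambda$ and $\rho\leq R_0$, we have
\[
  \lambda\sinh\rho\leq\frac{\sinh R_0}{R_0}\,C_0 ,
\]
as in Corollary~\ref{cor:lambda-sin-rho-two-sided-hyperbolic}. Thus $v=\lambda\sinh\rho/\widetilde u\leq C/c_0$.

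Finally, \eqref{eq:radial-graph-factor} gives $|\nabla\rho|^2=\sinh^2\rho\,(v^2-1)$. Hence
\[
  |\nabla\widetilde\rho|=\lambda|\nabla\rho|\leq\lambda\sinh\rho\,v\leq C .
\]
All constants depend only on $c_0$, $C_0$ and $R_0$, and never on $\tau_h$.

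If the supporting-hyperplane route turns out to be awkward, the fallback is a maximum-principle argument. I would apply it to $\log\widetilde u-\psi(\widetilde\rho)$ using Lemma~\ref{lem:basic-normalized-support-evolution-hyperbolic}, or to $|\nabla\log\widetilde\rho|^2$ via \eqref{eq:normalized-radial-flow-hat-h}. The geometric argument above is simpler and is the one I would pursue.
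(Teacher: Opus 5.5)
Your argument is correct, but it gets the support-function bound by a different route from the paper.

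\textbf{The paper's route.} The paper works locally at a spatial minimum $\theta_0$ of $\widetilde u(\cdot,\tau)$. It uses the formula $\nabla_i u=h_i{}^j\langle V,X_j\rangle$ and the invertibility of $h$ on $[0,\tau_h)$ to conclude $\langle V,X_j\rangle=\sinh\rho\,\rho_j=0$. Hence $v=1$ at $\theta_0$ and $\min\widetilde u=\lambda\sinh\rho(\theta_0)$, which Corollary~\ref{cor:lambda-sin-rho-two-sided-hyperbolic} bounds from below. The bounds for $v$ and $|\nabla\widetilde\rho|$ then follow exactly as in your last two steps.

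\textbf{Your route.} You use a global property instead: convexity of $\Omega_\tau$ (via Hadamard--Stoker), the inscribed ball $B(o,\rho_-)$, and the identity $u=\sinh d(o,P_X)$. This gives $u\geq\sinh\rho_-$ at every point, not just at the minimizer.

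The identity you flagged as delicate does hold. Work in the hyperboloid model with $X=\cosh\rho\,o+\sinh\rho\,\theta$ and $\partial_\rho=\sinh\rho\,o+\cosh\rho\,\theta$. Then $P_X=\nu^{\perp}\cap\mathbb H^{n+1}$, so $\sinh d(o,P_X)=|\langle o,\nu\rangle_L|$. Write $\nu=\langle\partial_\rho,\nu\rangle\partial_\rho+w$. Since $w$ is Lorentz-orthogonal to both $X$ and $\partial_\rho$, it is orthogonal to $\operatorname{span}\{o,\theta\}$. Therefore $|\langle o,\nu\rangle_L|=\sinh\rho\,\langle\partial_\rho,\nu\rangle=u$.

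\textbf{What each buys.} The paper's argument is shorter and purely differential. It needs only nondegeneracy of $h$ at the minimum point together with the radial-graph structure. Yours is the hyperbolic analogue of the classical Euclidean fact that a convex body containing $B(o,r)$ has support function at least $r$. It relies on convexity of the enclosed body and its supporting hyperplanes rather than on pointwise invertibility of $h$, and it makes the constant geometrically transparent.

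Your opening appeal to Lemma~\ref{lem:global-supporting-horosphere} is not needed in either version, as you yourself observed.
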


\begin{proof}
Fix $\tau\in [0,\tau_h)$ and let $\theta_0$ be a minimum point of
$\widetilde u(\cdot,\tau)$.  Put
$V=\sinh\rho\,\partial_\rho$.  Since
$\overline\nabla V=\cosh\rho\,\mathrm{Id}$, differentiation of
$u=\langle V,\nu\rangle$ gives
\begin{equation}\label{eq:first-derivative-support-C1-hyperbolic}
  \nabla_i u
  =h_i{}^j\langle V,X_j\rangle.
\end{equation}
Because $\lambda$ is spatially constant, at $\theta_0$ we have
\begin{equation}\label{eq:support-minimum-C1-hyperbolic}
  0=\nabla_i\widetilde u
   =\lambda h_i{}^j\langle V,X_j\rangle.
\end{equation}
On $[0,\tau_h)$, the Weingarten map $h_i{}^j$ is positive definite and
hence invertible.  It follows that
\[
  \langle V,X_j\rangle=0
  \qquad\text{for every }j.
\]
For the radial graph,
\[
  \langle V,X_j\rangle=\sinh\rho\,\rho_j.
\]
Since $\rho>0$, it follows that
$\rho_j(\theta_0,\tau)=0$ for every $j$. Consequently,
$v(\theta_0,\tau)=1$ and
\[
  \min_{\mathbb S^n}\widetilde u(\cdot,\tau)
  =\lambda\sinh\rho(\theta_0,\tau).
\]
Corollary~\ref{cor:lambda-sin-rho-two-sided-hyperbolic} now yields
\begin{equation}\label{eq:normalized-support-lower-bound-C1-hyperbolic}
  \widetilde u\geq c
  \qquad\text{on }\mathbb S^n\times[0,\tau_h).
\end{equation}
The same corollary gives $\lambda\sinh\rho\leq C$, and hence
\[
  v=\frac{\lambda\sinh\rho}{\widetilde u}\leq C.
\]
Finally, the definition of $v$ implies
\[
  |\nabla\widetilde\rho|
  =\lambda|\nabla\rho|
  =\lambda\sinh\rho\sqrt{v^2-1}\leq C.
\]
All constants depend only on the constants in the normalized $C^0$
estimate and are independent of $\tau_h$.
\end{proof}

\section{The \texorpdfstring{$C^2$}{C2} estimate}

The aim of this section is to obtain uniform two-sided bounds for the
normalized principal curvatures and, consequently, a uniform bound for
the spherical Hessian of $\widetilde\rho$.  We proceed in several steps.
Lemmas~\ref{lem:profile-logarithmic-derivative-bound-hyperbolic} and
\ref{lem:profile-second-derivative-bound-hyperbolic} establish the
properties of the radial weight $f$ needed in the $C^2$ estimate.
Lemma~\ref{lem:normalized-speed-lower-bound-hyperbolic} then uses the
normalized $C^0$ and $C^1$ estimates and the maximum principle to obtain
a positive lower bound for the normalized speed.
The main estimate, Lemma~\ref{lem:h-convexity-and-lower-curvature-hyperbolic},
uses inverse concavity, the supporting-horosphere inequality, and the
maximum principle to preserve strict $h$-convexity and obtain a uniform
positive lower bound for every normalized principal curvature.
Lemma~\ref{lem:normalized-speed-upper-bound-hyperbolic} then applies the
maximum principle to the quotient of the normalized speed by a shifted
support function and obtains an upper bound for the normalized speed.
Finally, Lemma~\ref{lem:normalized-C2-estimate-hyperbolic} combines these
two bounds to control all normalized principal curvatures from both sides,
and then uses the radial graph formula to bound
$\nabla_\sigma^2\widetilde\rho$.  At the end of the section, the
classical regularity theory for uniformly parabolic equations with
convex level sets gives long-time existence and all higher-order
estimates.

Throughout this section, all constants are independent of
$\varepsilon_h(M_0)^{-1}$ and of $\tau_{\max}$, except where
explicitly stated.  The only place where the initial strictness margin
$\varepsilon_h(M_0)$ enters is the first-contact argument of
Lemma~\ref{lem:h-convexity-and-lower-curvature-hyperbolic}, and even
there the final bounds depend only on the fixed data listed in the
Notation paragraph of the Introduction.

\begin{lemma}\label{lem:profile-logarithmic-derivative-bound-hyperbolic}
Assume the radial-weight hypotheses of either
Theorem~\ref{thm:main} or Theorem~\ref{thm:main-critical}.  Then
\[
  f'(r)>0,
  \qquad r>0.
\]
Moreover, for every $R_0\in(0,\infty)$, there exist constants
$c,C>0$, depending only on $f|_{(0,R_0]}$, such that
\begin{equation}\label{eq:profile-logarithmic-derivative-bound-hyperbolic}
  cr^{\beta-1}\leq f'(r)\leq Cr^{\beta-1},
  \qquad
  c\leq\frac{r f'(r)}{f(r)}\leq C,
  \qquad 0<r\leq R_0.
\end{equation}
\end{lemma}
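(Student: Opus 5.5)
Set $p:=1+k\alpha$ and $w:=f^{1/p}$, so that $w>0$ on $(0,\infty)$ and $w(0)=0$. The structural condition \eqref{eq:main-profile-root-convexity} (equivalently \eqref{eq:critical-profile-root-convexity}) says that $Q:=w'/\cosh(r/2)$ is nondecreasing on $(0,\infty)$.

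\textbf{Positivity.} First I would show $w'>0$. Suppose $w'(r_1)\le0$ for some $r_1>0$. Monotonicity of $Q$ gives $w'(r)\le0$ on $(0,r_1]$. Hence $w(r_1)\le\lim_{r\downarrow0}w(r)=w(0)=0$, contradicting $w(r_1)>0$. So $w'>0$, and therefore $f'=p\,w^{p-1}w'>0$.

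\textbf{Logarithmic derivative bounds.} Next I would get the two-sided bound on $rf'/f$. Since $rf'/f=p\,rw'/w$, it suffices to bound $rw'/w$ on $(0,R_0]$.

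For the lower bound, write $w(r)=\int_0^r w'(s)\,ds$. Because $Q$ is nondecreasing, $w'(s)\le w'(r)\cosh(s/2)/\cosh(r/2)\le w'(r)\cosh(R_0/2)$ for $s\le r$. Thus $w(r)\le r\,w'(r)\cosh(R_0/2)$, which gives
\[
  \frac{rw'}{w}\ge\frac{1}{\cosh(R_0/2)}.
\]

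For the upper bound, monotonicity of $Q$ in the other direction gives $w'(s)\ge w'(r/2)/\cosh(R_0/2)$ for $s\in[r/2,r]$. Hence
\[
  w(r)-w(r/2)\ge \frac{r}{2\cosh(R_0/2)}\,w'(r/2).
\]
This does not bound $w'(r)$ directly. Instead I would use the asymptotics. By Lemma~\ref{lem:model-profile-comparison-hyperbolic}, $w(r)\asymp r^{a}$ with $a=\beta/p\ge1$. Combining this with the displayed inequality gives $w'(r/2)\le C w(r)/r\le C' r^{a-1}$, so $w'(r)\le Cr^{a-1}$ for $r\le R_0/2$. Away from the origin, on $[r_*,R_0]$, continuity and positivity of $f,f'$ give the bounds trivially. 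Dividing by $w\ge cr^a$ then yields $rw'/w\le C$.

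\textbf{Bounds on $f'$.} Finally, $f'=(rf'/f)\,(f/r)$. Lemma~\ref{lem:model-profile-comparison-hyperbolic} gives $f/r\asymp r^{\beta-1}$, so $cr^{\beta-1}\le f'\le Cr^{\beta-1}$ on $(0,R_0]$.

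The main obstacle I expect is the upper bound on $w'$ near $0$, since the structural condition controls $w'$ from below only at larger radii. The halving trick above, combined with the two-sided comparison $w\asymp r^a$, handles this without invoking the flatness of $g$. Alternatively, near $0$ one can use $\mathcal P_1$ from \eqref{eq:profile-symbol-bounds-all-orders}, which directly gives $|f'|\le Cr^{\beta-1}$.
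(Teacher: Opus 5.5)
Your proof is correct. The positivity step is the paper's argument: the integrating factor $e^{-A(r)}$ with $A(r)=\int_0^r\frac12\tanh\frac s2\,ds$ equals $1/\cosh\frac r2$, so the paper's monotone quantity is your $Q$. The quantitative bounds are obtained differently.

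The paper reuses the regularity and flatness hypotheses on $g$. Taylor's theorem applied to $g'$ gives $g'(r)=o(r^{\beta-1})$, hence $f'(r)=\beta r^{\beta-1}+o(r^{\beta-1})$ and $rf'(r)/f(r)\to\beta$; compactness then covers the rest of $(0,R_0]$. You use only the zeroth-order comparison $f\asymp r^\beta$ of Lemma~\ref{lem:model-profile-comparison-hyperbolic}, together with integral inequalities coming from the monotonicity of $Q$.

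Your $\cosh$ factors are superfluous. Once $w'>0$, the function $w'=Q\cosh\frac r2$ is a product of positive nondecreasing functions, so $w$ is convex with $w(0)=0$. This gives $w(r)\le rw'(r)$, i.e.\ $rf'/f\ge 1+k\alpha$ uniformly in $R_0$, and $w'(r/2)\le 2w(r)/r$ directly.

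The two routes buy different things. The paper's route yields the sharp limit $rf'/f\to\beta$. It is also the mechanism the paper needs anyway for the two-sided $f''$ bound in Lemma~\ref{lem:profile-second-derivative-bound-hyperbolic}: there the structural condition is one-sided and gives no upper bound on $w''$, so your convexity argument would not reach it. Your route needs no derivative information on $g$. It would therefore survive under the weakened hypothesis $0<\liminf f/r^\beta\le\limsup f/r^\beta<\infty$ from the remark following Corollary~\ref{cor:lambda-sin-rho-two-sided-hyperbolic}, provided the structural condition holds. Your fallback via $\mathcal P_1$ is unnecessary: it supplies only the upper bound your main argument already gives.
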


\begin{proof}
Set
\[
  p:=1+k\alpha,
  \qquad
  q:=f^{1/p},
  \qquad
  a(r):=\frac12\tanh\frac r2.
\]
The structural condition gives
\begin{equation}\label{eq:radial-weight-q-differential-inequality}
  q''(r)-a(r)q'(r)\geq0.
\end{equation}
Let
\[
  A(r):=\int_0^r a(s)\,ds.
\]
Then
\begin{equation}\label{eq:radial-weight-integrating-factor}
  \bigl(e^{-A(r)}q'(r)\bigr)'
  =e^{-A(r)}\bigl(q''(r)-a(r)q'(r)\bigr)\geq0.
\end{equation}
We claim that $q'(r)>0$ for every $r>0$.  Otherwise, if
$e^{-A(r_0)}q'(r_0)\leq0$ for some $r_0>0$, the monotonicity in
\eqref{eq:radial-weight-integrating-factor} would imply
$q'(r)\leq0$ for $0<r\leq r_0$.  Since $q$ extends continuously to
$q(0)=0$, this would give $q(r_0)\leq0$, contrary to $q(r_0)>0$.
Thus $q'>0$ on $(0,\infty)$, and
\eqref{eq:radial-weight-q-differential-inequality} also gives
$q''\geq a q'>0$.  In particular, $f'=p q^{p-1}q'>0$.

It remains to prove the quantitative estimates.  In the supercritical
case, let $m=\lfloor\beta\rfloor$.  Taylor's theorem and the vanishing
of $g^{(j)}(0)$ for $j=0,\ldots,m$ give
\[
  g'(r)=O(r^m)=o(r^{\beta-1}).
\]
In the critical case, put $N=\lceil\beta+\delta\rceil$.  The condition
$g(r)=O(r^{\beta+\delta})$ together with $g\in C^N([0,\infty))$
implies
\[
  g^{(j)}(0)=0,
  \qquad j=0,\ldots,N-1,
\]
and hence
\[
  g'(r)=O(r^{N-1})=o(r^{\beta-1}).
\]
Since
\[
  (\sinh^\beta r)'
  =\beta\sinh^{\beta-1}r\cosh r
  =\beta r^{\beta-1}+O(r^{\beta+1}),
\]
we obtain, in both cases,
\begin{equation}\label{eq:radial-weight-first-derivative-asymptotics}
  f'(r)=\beta r^{\beta-1}+o(r^{\beta-1}),
  \qquad
  \frac{rf'(r)}{f(r)}\longrightarrow\beta
  \quad\text{as }r\downarrow0.
\end{equation}
The asserted two-sided estimates therefore hold near the origin.  On
every compact subinterval of $(0,R_0]$, they follow from the smoothness
of $f$, the positivity of $f$ and $f'$, and compactness.  Combining the
two regions proves
\eqref{eq:profile-logarithmic-derivative-bound-hyperbolic}.
\end{proof}

\begin{lemma}\label{lem:profile-second-derivative-bound-hyperbolic}
Assume that the conditions of either Theorem~\ref{thm:main} or
Theorem~\ref{thm:main-critical} hold.  Then, for every
$R_0\in(0,\infty)$, there exists a constant $C>0$, depending only on
$f|_{(0,R_0]}$, such that
\begin{equation}\label{eq:profile-second-derivative-bound-hyperbolic}
  |f''(r)|\leq Cr^{\beta-2},
  \qquad 0<r\leq R_0.
\end{equation}
\end{lemma}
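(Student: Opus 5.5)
The plan is to split $(0,R_0]$ into a neighbourhood $(0,r_0]$ of the origin and a compact remainder $[r_0,R_0]$. On the compact remainder, $f\in C^\infty((0,\infty))$ gives $|f''|\leq C$ and $r^{\beta-2}\geq \min\{r_0^{\beta-2},R_0^{\beta-2}\}>0$, so the estimate there is immediate. The work is therefore near the origin, where we write $f=\sinh^\beta r+g$ and treat the two terms separately.

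For the model term we compute
\[
  (\sinh^\beta r)''=\beta(\beta-1)\sinh^{\beta-2}r\cosh^2r+\beta\sinh^\beta r,
\]
and since $\sinh r\sim r$ this is $O(r^{\beta-2})$ on $(0,r_0]$. This holds whether $\beta-2$ is positive or negative, because $\sinh^{\beta-2}r\leq C r^{\beta-2}$ on bounded intervals with $C$ depending on the sign of $\beta-2$. For the remainder we use the flatness of $g$ at $0$, exactly as in the proof of Lemma~\ref{lem:profile-logarithmic-derivative-bound-hyperbolic}.

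In the supercritical case, $g\in C^{m+1}([0,\infty))$ with $m=\lfloor\beta\rfloor\geq 2$, because $\beta>1+k\alpha>1$. If $\beta<2$ we would have $m=1$, which needs separate care; but $\beta>1+k\alpha$ only forces $\beta>1$, so $m=1$ can indeed occur. In all cases $g''\in C^{m-1}$ with $g''(0)=\cdots=g^{(m)}(0)=0$ whenever $m\geq2$, and Taylor's theorem for $g''$ gives $|g''(r)|\leq C r^{m-1}$. Since $m-1>\beta-2$, this is $\leq C r^{\beta-2}$ on $(0,r_0]$.

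When $m=1$ (that is, $1<\beta<2$), the only information is $g\in C^2$ with $g'(0)=0$. Then $|g''|\leq C$, and $C\leq C' r^{\beta-2}$ because $\beta-2<0$. This case is therefore also fine.

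In the critical case, $N=\lceil\beta+\delta\rceil$ and $g\in C^N$ with $g^{(j)}(0)=0$ for $j\leq N-1$, as established in Lemma~\ref{lem:profile-logarithmic-derivative-bound-hyperbolic}. If $N\geq 3$, Taylor's theorem gives $|g''(r)|\leq Cr^{N-2}$ with $N-2\geq\beta+\delta-2>\beta-2$. If $N=2$, then $\beta\leq 2$ and $|g''|\leq C\leq C'r^{\beta-2}$.

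Combining the two regions with the estimate for the model term proves \eqref{eq:profile-second-derivative-bound-hyperbolic}. The main point requiring care is the bookkeeping of the exponent $\beta-2$ against the available regularity order of $g$. Specifically, we must check that the Taylor order $m-1$ (respectively $N-2$) of $g''$ always dominates $\beta-2$, and that the bounded cases are absorbed because $r^{\beta-2}$ is bounded below on $(0,r_0]$ when $\beta\leq 2$.

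Alternatively, one can simply invoke the scale-invariant bound \eqref{eq:profile-symbol-bounds-all-orders} with $j=2$, which is assumed in both theorems and is literally the statement. The Taylor argument shows, however, that only the $C^2$-level hypotheses are needed, consistent with the remark in the introduction.
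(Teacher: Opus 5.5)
Your argument is correct and is essentially the paper's proof. Both write $f=\sinh^\beta r+g$, apply Taylor's theorem to $g''$ using the flatness of $g$ at $0$ to get $g''=O(r^{m-1})$ (resp.\ $O(r^{N-2})$) with exponent strictly larger than $\beta-2$, compare with $(\sinh^\beta r)''=\beta(\beta-1)r^{\beta-2}+O(r^{\beta})$, and use compactness away from the origin. Two small points: your separate treatment of $m=1$ and $N=2$ is unnecessary, since the same Taylor bound with exponent $0>\beta-2$ already covers them, and you should delete the initial claim that $m\geq 2$ follows from $\beta>1$, which you yourself go on to retract.
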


\begin{proof}
Write $f(r)=\sinh^\beta r+g(r)$.  In the supercritical case, let
$m=\lfloor\beta\rfloor$.  The vanishing of
$g^{(j)}(0)$ for $j=0,\ldots,m$, together with
$g\in C^{m+1}$, allows us to apply Taylor's theorem to $g''$ and gives
\[
  g''(r)=O(r^{m-1})=o(r^{\beta-2}).
\]

In the critical case, the assumptions of
Theorem~\ref{thm:main-critical}, with
$N=\lceil\beta+\delta\rceil$, give
\[
  g^{(j)}(0)=0,
  \qquad j=0,\ldots,N-1.
\]
Taylor's theorem applied to $g''$ therefore yields
\[
  g''(r)=O(r^{N-2})=o(r^{\beta-2}),
\]
where we used $N>\beta$.

Moreover,
\[
  (\sinh^\beta r)''
  =\beta(\beta-1)\sinh^{\beta-2}r\cosh^2r
   +\beta\sinh^\beta r
  =\beta(\beta-1)r^{\beta-2}+O(r^\beta).
\]
Thus, in both cases,
\[
  f''(r)=\beta(\beta-1)r^{\beta-2}+o(r^{\beta-2})
  \qquad\text{as }r\downarrow0.
\]
The estimate near the origin, together with the smoothness of $f$ on
every compact subinterval of $(0,\infty)$, proves
\eqref{eq:profile-second-derivative-bound-hyperbolic}.
\end{proof}

\begin{lemma}[Lower bound for the normalized speed]
\label{lem:normalized-speed-lower-bound-hyperbolic}
On the maximal interval $[0,\tau_h)$ of Section~\ref{sec:C1-estimate}, there exists
a constant $c_{\widetilde\Phi}>0$, independent of $\tau_h$, such
that
\begin{equation}\label{eq:normalized-speed-lower-bound-hyperbolic}
  \widetilde\Phi\geq c_{\widetilde\Phi}.
\end{equation}
\end{lemma}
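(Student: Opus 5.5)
We apply the maximum principle to the speed evolution \eqref{eq:basic-normalized-speed-evolution-hyperbolic} at a spatial minimum of $\widetilde\Phi$. The plan is to show that every term apart from the quadratic $f'$-term is either nonnegative or linear in $\widetilde\Phi$ with a bounded coefficient, while the $f'$-term is $O(\widetilde\Phi^2)$. That turns the minimum into a logistic-type differential inequality from below.

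At a minimum point we have $\widetilde\nabla\widetilde\Phi=0$ and $\widetilde\nabla^2\widetilde\Phi\geq0$, and $\dot F$ is positive definite on $[0,\tau_h)$ because $\hat h>0$ there. Hence $\mathcal L\widetilde\Phi\leq\partial_\tau\widetilde\Phi$, and the pulled-back tangential transport term vanishes at extrema. On the right-hand side, $\lambda^\beta f\widetilde\Phi\dot F^{ij}(\hat h^2)_{ij}\geq0$, so we discard it. The term $(\beta-k\alpha)\gamma_0\widetilde\Phi$ is positive. For the $f'$-term we use Lemma~\ref{lem:profile-logarithmic-derivative-bound-hyperbolic}: $\rho f'(\rho)/f(\rho)\leq C$, so
\[
  \frac{f'(\rho)}{\lambda f(\rho)v}\leq\frac{C}{\lambda\rho}=\frac{C}{\widetilde\rho}\leq\frac{C}{c_0}
\]
by Lemma~\ref{lem:normalized-C0-estimate-hyperbolic} and $v\geq1$. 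This term is therefore bounded below by $-C\widetilde\Phi^2$.

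The main obstacle is the ambient-curvature term $-\lambda^{\beta-2}f\widetilde\Phi\dot F^{ij}\widetilde g_{ij}$, which has the unfavorable sign. We will control it by strict $h$-convexity on $[0,\tau_h)$, i.e.\ $\kappa_i>1$, so $\hat\kappa_i>\lambda^{-1}$. This is the point where the definition of $\tau_h$ enters. In eigen-coordinates $\dot\sigma_k^{ii}\hat\kappa_i^2\geq\lambda^{-2}\dot\sigma_k^{ii}$, hence
\[
  \lambda^{-2}\dot F^{ij}\widetilde g_{ij}\leq\dot F^{ij}(\hat h^2)_{ij}.
\]
The two curvature terms therefore combine into something nonnegative:
\[
  \lambda^\beta f\widetilde\Phi\bigl(\dot F^{ij}(\hat h^2)_{ij}-\lambda^{-2}\dot F^{ij}\widetilde g_{ij}\bigr)\geq0.
\]
If instead one wanted to keep the good term for some other use, the fallback would be $\lambda^{\beta-2}f\leq C\lambda^{-2}\widetilde\rho^{\,\beta}$ together with the homogeneity $\dot F^{ij}\widetilde g_{ij}\leq C F/\hat\kappa_{\min}$. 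That route is worse, and the $h$-convexity pairing is what I would use.

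Combining these bounds, at every differentiability time of $\widetilde\Phi_{\min}$ we get
\[
  \frac{d}{d\tau}\widetilde\Phi_{\min}\geq\widetilde\Phi_{\min}\bigl((\beta-k\alpha)\gamma_0-C\widetilde\Phi_{\min}\bigr).
\]
ODE comparison then gives
\[
  \widetilde\Phi\geq\min\Bigl\{\min_{M_0}\widetilde\Phi(\cdot,0),\ \frac{(\beta-k\alpha)\gamma_0}{C}\Bigr\}>0,
\]
where $\widetilde\Phi(\cdot,0)=f(\rho_0)\sigma_k^\alpha(h)>0$ because $M_0$ is strictly $h$-convex and $\rho_0>0$. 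The constant depends only on the $C^0$ bounds, $f$ and the initial speed, and not on $\tau_h$.
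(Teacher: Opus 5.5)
Your proposal is correct and follows essentially the same route as the paper: you bound the $f'$-term by $-C\widetilde\Phi^2$ via the logarithmic-derivative and $C^0$ bounds, absorb the ambient term into $\lambda^\beta f\widetilde\Phi\dot F^{ij}(\hat h^2)_{ij}$ using $\hat\kappa_i>\lambda^{-1}$ (from the definition of $\tau_h$) together with ellipticity, and finish with the logistic ODE comparison using $\beta-k\alpha\geq1$. The one blemish is expository: you first say you discard the $\dot F^{ij}(\hat h^2)_{ij}$ term and then use it to absorb the ambient term, so that earlier sentence should be deleted.
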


\begin{proof}
By Lemma~\ref{lem:model-profile-comparison-hyperbolic}, the normalized
$C^0$ estimate, and $\rho=\widetilde\rho/\lambda$, we have
\[
  c\widetilde\rho^\beta
  \leq\lambda^\beta f(\rho)
  \leq C\widetilde\rho^\beta.
\]
Moreover, since $\widetilde\rho=\lambda\rho$, we have the identity
\[
  \frac{f'(\rho)}{\lambda f(\rho)}
  =\frac{\rho f'(\rho)}{f(\rho)}
    \frac{1}{\lambda\rho}
  =\frac{\rho f'(\rho)}{f(\rho)}
    \frac{1}{\widetilde\rho}.
\]
Hence Lemma~\ref{lem:profile-logarithmic-derivative-bound-hyperbolic},
together with $0<c_0\leq\widetilde\rho\leq C_0$, implies that
\begin{equation}\label{eq:speed-profile-coefficient-bounds}
  0<c\leq\lambda^\beta f(\rho)\leq C,
  \qquad
  0\leq\frac{f'(\rho)}{\lambda f(\rho)}\leq C.
\end{equation}

Let
\[
  \widetilde{\Phi}_{\min}(\tau):=\min_{\mathbb S^n}\widetilde\Phi(\cdot,\tau).
\]
Fix $\tau<\tau_h$. In a $\widetilde g$-orthonormal principal frame,
$\hat\kappa_i=\lambda^{-1}\kappa_i>\lambda^{-1}$ and ellipticity gives
$\partial F/\partial\hat\kappa_i>0$.  Hence, at a spatial minimum of
$\widetilde\Phi$,
\[
  \lambda^\beta f(\rho)\widetilde\Phi
    \dot F^{ij}
    \bigl((\hat h^2)_{ij}-\lambda^{-2}\widetilde g_{ij}\bigr)
  =\lambda^\beta f(\rho)\widetilde\Phi
    \sum_i\frac{\partial F}{\partial\hat\kappa_i}
       \bigl(\hat\kappa_i^2-\lambda^{-2}\bigr)
  >0.
\]
Consequently, the speed evolution equation
\eqref{eq:basic-normalized-speed-evolution-hyperbolic} and
\eqref{eq:speed-profile-coefficient-bounds} give
\begin{equation}\label{eq:minimum-speed-logistic-inequality-hyperbolic}
  \frac{d}{d\tau}\widetilde{\Phi}_{\min}
  \geq(\beta-k\alpha)\gamma_0\widetilde{\Phi}_{\min}-C\widetilde{\Phi}_{\min}^2.
\end{equation}
Since $\beta-k\alpha\geq1$, it follows that
\[
  \widetilde\Phi_{\min}(\tau)
  \geq
  \min\left\{
    \widetilde\Phi_{\min}(0),
    \frac{(\beta-k\alpha)\gamma_0}{C}
  \right\}>0.
\]
This proves \eqref{eq:normalized-speed-lower-bound-hyperbolic}.
\end{proof}
Let
$\hat\kappa_1\leq\cdots\leq\hat\kappa_n$ denote the eigenvalues of
$\hat h$.  By Lemmas~\ref{lem:model-profile-comparison-hyperbolic},
\ref{lem:profile-logarithmic-derivative-bound-hyperbolic}, and
\ref{lem:profile-second-derivative-bound-hyperbolic}, together with the
two-sided $C^0$ estimate, all the normalized coefficients involving
$f$ and its first two derivatives are uniformly bounded.  More
precisely,
\begin{equation}\label{eq:normalized-profile-coefficient-bounds}
  \begin{gathered}
  0<c\leq
  \lambda^\beta f\left(\frac{\widetilde\rho}{\lambda}\right)
  \leq C,\\
  \lambda^{\beta-1}
  \left|f'\left(\frac{\widetilde\rho}{\lambda}\right)\right|
  +\lambda^{\beta-2}
  \left|f''\left(\frac{\widetilde\rho}{\lambda}\right)\right|
  \leq C.
  \end{gathered}
\end{equation}
Since
$\widetilde\Phi=\lambda^\beta
f(\widetilde\rho/\lambda)\sigma_k^\alpha(\hat h)$,
the lower speed bound in
Lemma~\ref{lem:normalized-speed-lower-bound-hyperbolic} and
\eqref{eq:normalized-profile-coefficient-bounds} imply
\begin{equation}\label{eq:sigma-k-lower-bound-before-C2}
  \sigma_k(\hat h)\geq c>0.
\end{equation}

In the proof below, we shall work directly with the largest eigenvalue
$\Lambda:=\lambda_{\max}(\check h)$.  Since $\Lambda$ is in general only
locally Lipschitz, its differential inequality must be interpreted in the
barrier sense. We therefore record the convention that will be used.

\begin{definition*}[Barrier convention]
Let $J$ be a time interval.  A continuous function $\phi$ defined on
$M\times J$ satisfies
\[
  \mathcal L\phi
  \leq\mathcal F(\widetilde\nabla\phi,\phi,x,\tau)
\]
in the barrier sense if, for every $(x_0,\tau_0)\in M\times J$,
there exist a neighbourhood $U$ of $x_0$, a number $\epsilon>0$,
and a smooth function
\[
  \psi\in C^\infty
  \bigl(U\times(\tau_0-\epsilon,\tau_0]\bigr)
\]
such that $\psi\leq\phi$ on $U\times(\tau_0-\epsilon,\tau_0]$, with
equality at $(x_0,\tau_0)$, and
\begin{equation}\label{eq:barrier-subsolution-definition}
  \mathcal L\psi(x_0,\tau_0)
  \leq
  \mathcal F\bigl(
    \widetilde\nabla\psi(x_0,\tau_0),
    \psi(x_0,\tau_0),x_0,\tau_0
  \bigr).
\end{equation}
\end{definition*}

\begin{remark}[Lower supporting
barriers]\label{rem:lower-supporting-barriers}
In the proof of
Lemma~\ref{lem:h-convexity-and-lower-curvature-hyperbolic} below, all
barriers are taken from below at the contact point, exactly as in the
definition above.  Consequently, the inequality
\eqref{eq:barrier-subsolution-definition} is the correct sign for a
maximum-principle argument applied to a function that is about to
violate an upper bound, and no upper barrier convention is needed.
\end{remark}

\begin{lemma}[Strict $h$-convexity and a normalized lower curvature bound]
\label{lem:h-convexity-and-lower-curvature-hyperbolic}
Assume that the conditions of either Theorem~\ref{thm:main} or
Theorem~\ref{thm:main-critical} hold.  Then:
\begin{enumerate}
\item strict $h$-convexity is preserved up to the maximal existence
      time, i.e.
      $$\tau_h=\tau_{\max}.$$
\item if
\[
  \Lambda:=\lambda_{\max}(\check h)
  =\frac{1}{\hat\kappa_1},
\]
then there exists a constant $C>0$, depending only on the coarse
normalized $C^0$ and $C^1$ bounds, the initial normalized speed range,
and the stated radial-weight bounds---in particular, independent of
$\varepsilon_h(M_0)^{-1}$ and of $\tau_{\max}$---such that
\begin{equation}\label{eq:lemma54-Lambda-bound-hyperbolic}
  \Lambda\leq C \text{ on } [0,\tau_{\max}).
\end{equation}
Consequently, all normalized principal curvatures have a uniform
positive lower bound on $[0,\tau_{\max})$.
\end{enumerate}
\end{lemma}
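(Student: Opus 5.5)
The plan is to apply the maximum principle, in the barrier sense fixed above, to the auxiliary function
\[
  W:=\lambda\Lambda=\lambda_{\max}(\lambda\check h)=\frac{1}{\kappa_1},
\]
the reciprocal of the smallest \emph{unnormalized} principal curvature.
Strict $h$-convexity means exactly $W<1$.
We also need the normalized bound $\Lambda\le C$.
A single barrier argument should give both, provided we use a combined quantity.
The candidate is
\[
  Z:=\Lambda-\frac{\lambda}{\lambda^2-1}\quad\text{or, more simply, }\Lambda\text{ together with }\lambda\Lambda,
\]
treated as two separate first-contact arguments.

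\textbf{Step 1: the barrier inequality.}
At a point where $\Lambda$ is attained, choose a $\widetilde g$-orthonormal frame with $\check h_1{}^1=\Lambda$, and use the lower supporting function $\psi=\check h_1{}^1$ (extended by parallel transport) as the barrier.
Evaluate the $(1,1)$ component of \eqref{eq:full-inverse-weingarten-evolution-hyperbolic}.
Write $F=\sigma_k^\alpha=(\sigma_k^{1/k})^{k\alpha}=G^{k\alpha}$.

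The three gradient terms then combine as follows:
\[
  -\lambda^\beta f\,\Lambda^2\Bigl[k\alpha G^{k\alpha-1}\bigl(\ddot G\,\eta\eta+2\dot G\,\check h\,\eta\eta\bigr)
  +k\alpha(k\alpha-1)G^{k\alpha-2}(\dot G\eta)^2\Bigr],
  \qquad \eta=\widetilde\nabla_1\hat h.
\]
By Lemma~\ref{lem:sigma-k-root-inverse-concavity}, this is at most $-\lambda^\beta f\,\Lambda^2 k\alpha(k\alpha+1)G^{k\alpha-2}(\dot G\eta)^2$.
Note that $\dot G\eta=\widetilde\nabla_1 G$.
At the maximum point, $\widetilde\nabla\check h_1{}^1=0$, so $\widetilde\nabla_1\hat h_{11}=0$.

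The $f'$ cross term is linear in $\widetilde\nabla_1 F$ times $\widetilde\nabla_1\rho$.
It is absorbed by Cauchy--Schwarz into this good negative quadratic term, at the cost of $C\lambda^\beta(f')^2f^{-1}F\,\Lambda^2|\widetilde\nabla_1\rho|^2$.
That cost combines with the $f''$ term into
\[
  -\lambda^\beta F\Lambda^2\Bigl(f''-\tfrac{k\alpha}{1+k\alpha}\tfrac{(f')^2}{f}\Bigr)|\widetilde\nabla_1\rho|^2
  =-\lambda^\beta F\Lambda^2\,(1+k\alpha)\,\frac{q''}{q}\,f\,|\widetilde\nabla_1\rho|^2 .
\]
Here $q=f^{1/(1+k\alpha)}$.

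\textbf{Step 2: the Hessian of $\rho$.}
The Hessian of the distance function on the hypersurface is
\[
  \nabla^2\rho=\coth\rho\,(g-d\rho\otimes d\rho)-h\,\langle\partial_\rho,\nu\rangle .
\]
Hence $-f'\check h\check h\nabla^2\rho$ contributes the good term $+f'\langle\partial_\rho,\nu\rangle\Lambda$ (in normalized units) and the bad term $-f'\coth\rho\,\Lambda^2(1-|\nabla_1\rho|^2)$.

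Lemma~\ref{lem:global-supporting-horosphere} gives $\langle\partial_\rho,\nu\rangle>\tanh(\rho/2)$.
Combining this with the structural condition $q''\ge\frac12\tanh\frac\rho2\,q'$ should make the $|\nabla_1\rho|^2$ terms nonpositive.
This step is what the hypothesis \eqref{eq:main-profile-root-convexity} is designed for.
The remaining terms have the structure $\Lambda^2\cdot(\text{bounded})$ against the good term $-\lambda^\beta f\,\alpha\sigma_k^{\alpha-1}\dot\sigma_k(\hat h^2)\Lambda$.

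\textbf{Step 3: reaction terms and closing the bootstrap.}
In the reaction terms, $+\lambda^{\beta-2}f(1+k\alpha)F\Lambda^2$ is the ambient-curvature term with the bad sign.
It is controlled via $\lambda\Lambda<1$ (i.e.\ $h$-convexity), which turns it into $\lesssim F\Lambda$.
Using $\sigma_k\ge c$ from \eqref{eq:sigma-k-lower-bound-before-C2} and the bounds \eqref{eq:normalized-profile-coefficient-bounds}, one should get
\[
  \mathcal L\Lambda\le C\Lambda-c\Lambda^2 .
\]
This gives $\Lambda\le\max\{\Lambda(0),C/c\}$, with the right dependence of constants.

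For preservation, run the same computation for $\lambda\Lambda$ at a first time $\tau_h<\tau_{\max}$ where $\max\lambda\Lambda=1$.
Write $\mathcal L(\lambda\Lambda)=\lambda(\mathcal L\Lambda+\gamma_0\Lambda)$.
At the value $\lambda\Lambda=1$, the $(k\alpha-1)$ term, the $\gamma_0$ terms and the ambient term must combine to a strictly negative quantity.
The exact coefficients $(k\alpha-1)$, $(1+k\alpha)$ and $-\alpha\sigma_k^{\alpha-1}\dot\sigma_k(\hat h^2+\lambda^{-2}\widetilde g)$ evaluated at $\kappa_1=1$ should give
\[
  \lambda^{\beta}fF\lambda^{-1}\Bigl[\,k\alpha-1+1+k\alpha-\alpha\sigma_k^{-1}\textstyle\sum\dot\sigma_k^{ii}(\kappa_i^2+1)\Bigr]<0 .
\]
This uses $\sum\dot\sigma_k^{ii}\kappa_i^2\ge$ a suitable multiple of $k\sigma_k$ when all $\kappa_i\ge1$.

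The enclosure argument preceding Lemma~\ref{lem:normalized-C1-estimate-hyperbolic} then gives $\tau_h=\tau_{\max}$.

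The main obstacle I expect is Step 2: matching the horosphere constant $\tanh(\rho/2)$ precisely with the structural condition, so that every $|\widetilde\nabla_1\rho|^2$ and $\coth\rho$ term closes with no loss.
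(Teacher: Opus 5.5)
Your skeleton matches the paper: a barrier for $\Lambda$ from the $(1,1)$-component of the $\check h$-equation, inverse concavity, completing the square to produce $f''-\frac{k\alpha}{1+k\alpha}\frac{(f')^2}{f}$, and the radial Hessian identity. However, both places where the real work happens go wrong.

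\textbf{The quantitative bound.} In Step~2 you have the roles of the two radial terms reversed.

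Since $|\widetilde\nabla_1\widetilde\rho|^2\le 1-\langle\partial_\rho,\nu\rangle^2=1-v^{-2}$, the term $-\Lambda^2\lambda^{\beta-2}Ff'(\rho)\coth\rho\,(1-|\widetilde\nabla_1\widetilde\rho|^2)$ is nonpositive. It is precisely the source of the coercive $-c\Lambda^2$. Using $v\le C$, $\lambda^{\beta-2}f'(\rho)\coth\rho\ge c\widetilde\rho^{\beta-2}\ge c$ and $F\ge\widetilde\Phi/C$, it is $\le -c\widetilde\Phi\Lambda^2$. The remaining $|\widetilde\nabla_1\widetilde\rho|^2$-part only needs $q''\ge0$.

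Conversely, $+\Lambda\lambda^{\beta-1}f'\langle\partial_\rho,\nu\rangle F$ is a \emph{positive} term. It is harmless here ($\le C\widetilde\Phi\Lambda$), but a lower bound on $\langle\partial_\rho,\nu\rangle$ cannot help control it.

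Your goal of making the $|\widetilde\nabla_1\rho|^2$-terms nonpositive by themselves would require $q''\ge\coth\rho\,q'$. This fails for $f=\sinh^\beta$ whenever $\beta<2(1+k\alpha)$.

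Nor can $-\lambda^\beta f\alpha\sigma_k^{\alpha-1}\dot\sigma_k^{pq}(\hat h^2)_{pq}\Lambda$ supply $-c\Lambda^2$. For $k<n$, let $\hat\kappa_1=\Lambda^{-1}\to0$ with the other curvatures fixed: then $\sigma_k\ge c$ while $\dot\sigma_k^{pq}(\hat h^2)_{pq}$ stays bounded.

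Also, no upper speed bound is available yet. You must therefore keep the factorized form $\mathcal L\Lambda\le\widetilde\Phi(-c\Lambda^2+C\Lambda+C)$, absorbing $\gamma_0\Lambda$ via the lower speed bound, rather than $C\Lambda-c\Lambda^2$.

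\textbf{Preservation.} Here there is a genuine gap.

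The right quantity is $W=\lambda^{-1}\Lambda=1/\kappa_1$. Since $\check h=\lambda h^{-1}$, your $\lambda\Lambda$ equals $\lambda^2/\kappa_1$, and $h$-convexity reads $\Lambda<\lambda$, not $\lambda\Lambda<1$. With $W=\lambda^{-1}\Lambda$ one has $\mathcal LW=\lambda^{-1}(\mathcal L\Lambda-\gamma_0\Lambda)$, so the $\gamma_0$-terms simply cancel.

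Your bracket $2k\alpha-\alpha\sigma_k^{-1}\sum_i\dot\sigma_k^{ii}(\kappa_i^2+1)$ equals, by Euler's identity, $-\alpha\sigma_k^{-1}\sum_i\dot\sigma_k^{ii}(\kappa_i-1)^2\le0$. This vanishes at a point where all $\kappa_i=1$, so the reaction terms alone do not give a strict sign.

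Strictness has to come from the radial terms, which you never evaluate at the contact point. There $\Lambda=\lambda$ puts the positive $f'\langle\partial_\rho,\nu\rangle$ term at the same order as the $\coth\rho$ term, and one gets
\[
  \mathcal LW\le-\lambda^{\beta-1}F\Bigl[f'\bigl(\coth\rho\,(1-s)-\xi\bigr)+\Bigl(f''-\frac{k\alpha}{1+k\alpha}\frac{(f')^2}{f}\Bigr)s\Bigr],
  \qquad s=|\widetilde\nabla_1\widetilde\rho|^2,\quad \xi=\langle\partial_\rho,\nu\rangle .
\]
The missing idea is the strict positivity of this bracket.
\begin{itemize}
\item It is affine in $s\in[0,1-\xi^2]$, so it suffices to check the endpoints.
\item At $s=0$ it equals $f'(\coth\rho-\xi)>0$.
\item At $s=1-\xi^2$ (when $\xi<1$), use the identity
\[
  \frac{\xi-\xi^2\coth\rho}{1-\xi^2}-\frac12\tanh\frac\rho2
  =-\frac{(\xi-\tanh(\rho/2))^2}{2\tanh(\rho/2)(1-\xi^2)}
\]
together with the structural condition. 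The estimate $\xi>\tanh(\rho/2)$ from Lemma~\ref{lem:global-supporting-horosphere} excludes the equality case and gives strictness.
\end{itemize}
This first-contact step is where the horosphere estimate and the structural condition actually enter, not the quantitative bound.
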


\begin{proof}
The structural condition in either main theorem is equivalent to
\begin{equation}\label{eq:hyperbolic-profile-contact-condition}
  f''(\rho)-\frac{k\alpha}{1+k\alpha}
       \frac{(f'(\rho))^2}{f(\rho)}
  \geq\frac12\tanh\frac\rho2\,f'(\rho).
\end{equation}
\noindent\emph{Step 1. The differential inequality for $\Lambda$ in the barrier sense.}
Since $\check h$ is self-adjoint with respect to $\widetilde g$, its
largest eigenvalue is given by the Rayleigh quotient
\begin{equation}\label{eq:inverse-test-quantity}
  \Lambda(x,\tau):=\lambda_{\max}(\check h)=\sup_{0\neq v\in T_x M}
    \frac{\widetilde g(\check h v,v)}
         {|v|_{\widetilde g}^{2}}.
\end{equation}

Fix an arbitrary point $(x_0,\tau_0)$ and choose normal coordinates
$(x^1,\ldots,x^n)$ for $\widetilde g(\tau_0)$, centred at $x_0$.
Choose the first coordinate direction so that, at $x_0$, $\p_{x^1}$ is
a unit eigenvector of $\check h$ corresponding to
$\Lambda(x_0,\tau_0)$.
Regard the coordinate vector field $\p_{x^1}$ as time-independent,
and define the smooth function
\begin{equation}\label{eq:calabi-support-function}
  \psi(x,\tau)
  :=\frac{
    \widetilde g
    \bigl(\check h(\p_{x^1}),\p_{x^1}\bigr)}{
    \widetilde g(\p_{x^1},\p_{x^1})}.
\end{equation}
By the variational characterization of the largest eigenvalue,
\begin{equation}\label{eq:calabi-support-comparison}
  \psi(x,\tau)\leq\Lambda(x,\tau),
  \qquad
  \psi(x_0,\tau_0)=\Lambda(x_0,\tau_0).
\end{equation}
All the following derivative computations are evaluated at
$(x_0,\tau_0)$.  At this point,
\begin{equation}\label{eq:calabi-contact-data}
  \widetilde g(\p_{x^1},\p_{x^1})=1,
  \qquad
  \check h(\p_{x^1})=\Lambda\p_{x^1},
  \qquad
  \widetilde\nabla\p_{x^1}=0,
  \qquad
  \partial_\tau\p_{x^1}=0.
\end{equation}

For the time derivative,
\begin{equation}\label{eq:calabi-time-derivative-proof}
  \begin{aligned}
    \partial_\tau\psi
    &=(\partial_\tau\widetilde g)
      \bigl(\check h(\p_{x^1}),\p_{x^1}\bigr)
      +\widetilde g
      \bigl((\partial_\tau\check h)(\p_{x^1}),
        \p_{x^1}\bigr)
      -\Lambda(\partial_\tau\widetilde g)
      \bigl(\p_{x^1},\p_{x^1}\bigr)\\
    &=\widetilde g
      \bigl((\partial_\tau\check h)(\p_{x^1}),
        \p_{x^1}\bigr)\\
        & =(\partial_\tau\check h)_1{}^1.
  \end{aligned}
\end{equation}

For the spatial Hessian, metric compatibility and
\eqref{eq:calabi-contact-data} give
\begin{equation*}
  \begin{aligned}
    \widetilde\nabla_p\widetilde\nabla_q
      \Bigl[
        \widetilde g
        \bigl(\check h(\p_{x^1}),\p_{x^1}\bigr)
      \Bigr]=&\widetilde g\bigl(
       (\widetilde\nabla_p\widetilde\nabla_q\check h)
       (\p_{x^1}),\p_{x^1}\bigr)+\widetilde g\bigl(
       \check h(\widetilde\nabla_p\widetilde\nabla_q\p_{x^1}),
       \p_{x^1}\bigr)\\
     &+\widetilde g\bigl(
       \check h(\p_{x^1}),
       \widetilde\nabla_p\widetilde\nabla_q\p_{x^1}\bigr),
       \end{aligned}
       \end{equation*}
       and
\begin{equation*}     
    \widetilde\nabla_p\widetilde\nabla_q
      \Bigl[
        \widetilde g(\p_{x^1},\p_{x^1})
      \Bigr]=\widetilde g\bigl(
       \widetilde\nabla_p\widetilde\nabla_q\p_{x^1},
       \p_{x^1}\bigr)+\widetilde g\bigl(
       \p_{x^1},
       \widetilde\nabla_p\widetilde\nabla_q\p_{x^1}\bigr).
\end{equation*}
Since $\check h$ is diagonal at $(x_0,\tau_0)$, we have
\begin{equation}\label{eq:calabi-cross-term-identity}
  \widetilde g\bigl(
    \check h(\widetilde\nabla_p\widetilde\nabla_q\p_{x^1}),
    \p_{x^1}\bigr)=
    \widetilde g\bigl(
      \check h(\p_{x^1}),
      \widetilde\nabla_p\widetilde\nabla_q\p_{x^1}
    \bigr)=
    \Lambda\widetilde g\bigl(
      \widetilde\nabla_p\widetilde\nabla_q\p_{x^1},
      \p_{x^1}
    \bigr).
\end{equation}
Hence,
\begin{equation}\label{eq:calabi-spatial-hessian-conclusion}
  \begin{aligned}
    \widetilde\nabla_p\widetilde\nabla_q\psi
    ={}&\widetilde\nabla_p\widetilde\nabla_q
      \Bigl[
        \widetilde g
        \bigl(\check h(\p_{x^1}),\p_{x^1}\bigr)
      \Bigr]-\Lambda\widetilde\nabla_p\widetilde\nabla_q
      \Bigl[
        \widetilde g(\p_{x^1},\p_{x^1})
      \Bigr]\\
    ={}&\widetilde g\bigl(
       (\widetilde\nabla_p\widetilde\nabla_q\check h)
       (\p_{x^1}),\p_{x^1}\bigr)\\
     ={}&(\widetilde\nabla_p\widetilde\nabla_q\check h)_1{}^1.
  \end{aligned}
\end{equation}
We have therefore proved, at $(x_0,\tau_0)$, that
\begin{equation}\label{eq:calabi-L-relation}
  \mathcal L\psi
  =(\mathcal L\check h)_1{}^1.
\end{equation}
In the following display, $f$, $f'$, and $f''$ are evaluated at
$\widetilde\rho/\lambda$.  Taking $i=j=1$ in
\eqref{eq:full-inverse-weingarten-evolution-hyperbolic} gives
\begin{equation}\label{eq:inverse-weingarten-11-component}
  \begin{aligned}
  \mathcal L\Lambda
  \leq{}&-\Lambda^2\lambda^\beta f
    \alpha\sigma_k^{\alpha-1}\ddot\sigma_k^{pq,rs}
    \widetilde\nabla_1\hat h_{pq}
    \widetilde\nabla_1\hat h_{rs}\\
  &-\Lambda^2\lambda^\beta f
    \alpha(\alpha-1)\sigma_k^{\alpha-2}
    \dot\sigma_k^{pq}\dot\sigma_k^{rs}
    \widetilde\nabla_1\hat h_{pq}
    \widetilde\nabla_1\hat h_{rs}\\
    &-2\Lambda^2\lambda^\beta f
    \alpha\sigma_k^{\alpha-1}\dot\sigma_k^{rs}
    (\widetilde\nabla_r\hat h_1{}^b)\check h_b{}^p
    (\widetilde\nabla_s\hat h_p{}^1)\\
  &-2\Lambda^2\lambda^{\beta-1}f'
    \alpha\sigma_k^{\alpha-1}\dot\sigma_k^{pq}
    \widetilde\nabla_1\widetilde\rho\,
    \widetilde\nabla_1\hat h_{pq}\\
  &-\Lambda^2\lambda^{\beta-1}f'\sigma_k^\alpha
    \widetilde\nabla_1\widetilde\nabla_1\widetilde\rho-\Lambda^2\lambda^{\beta-2}f''\sigma_k^\alpha
    |\widetilde\nabla_1\widetilde\rho|^2\\
  &-\lambda^\beta f\alpha\sigma_k^{\alpha-1}
    \left(
      \dot\sigma_k^{pq}(\hat h^2)_{pq}
      +\lambda^{-2}\dot\sigma_k^{pq}\widetilde g_{pq}
    \right)\Lambda\\
  &+\lambda^\beta f(k\alpha-1)\sigma_k^\alpha
    +\lambda^{\beta-2}f(1+k\alpha)\sigma_k^\alpha\Lambda^2
    +\gamma_0\Lambda,
  \end{aligned}
\end{equation}
in barrier sense.

\medskip
\noindent\emph{Step 2. The inverse-concavity and radial-derivative estimates.}
Put
\begin{equation}\label{eq:F-and-G}
  G:=\sigma_k^{1/k}(\hat h),
\end{equation}
then Lemma~\ref{lem:sigma-k-root-inverse-concavity}, applied with
$A=\hat h$ and $A^{-1}=\check h$, yields
\begin{equation}\label{eq:inverse-concavity-inequality}
    \ddot G^{pq,rs}\eta_{pq}\eta_{rs}
    +2\dot G^{ij}\check h^{pq}\eta_{ip}\eta_{jq}
    \geq
    2G^{-1}\bigl(\dot G^{pq}\eta_{pq}\bigr)^2.
\end{equation}
Using $F=G^{k\alpha}$, we derive
\begin{equation}\label{eq:inverse-concavity-for-F}
  \begin{aligned}
  &\ddot F^{pq,rs}\eta_{pq}\eta_{rs}
   +2\dot F^{ij}\check h^{pq}\eta_{ip}\eta_{jq}\\
  ={}&k\alpha G^{k\alpha-1}
    \left(
      \ddot G^{pq,rs}\eta_{pq}\eta_{rs}
      +2\dot G^{ij}\check h^{pq}\eta_{ip}\eta_{jq}
    \right)
    +k\alpha(k\alpha-1)G^{k\alpha-2}
      \bigl(\dot G^{pq}\eta_{pq}\bigr)^2\\
  \geq{}&k\alpha(k\alpha+1)G^{k\alpha-2}
      \bigl(\dot G^{pq}\eta_{pq}\bigr)^2.
  \end{aligned}
\end{equation}
Set
$\eta_{pq}=\widetilde\nabla_1\hat h_{pq}$. Then the first three gradient terms in
\eqref{eq:inverse-weingarten-11-component} are exactly the negative
factor $-\Lambda^2\lambda^\beta f(\ddot F^{pq,rs}\eta_{pq}\eta_{rs}
   +2\dot F^{ij}\check h^{pq}\eta_{ip}\eta_{jq})$.  Since $f>0$ and
$\dot G^{pq}\eta_{pq}=\widetilde\nabla_1G$, we conclude that
\begin{equation}\label{eq:inverse-gradient-quadratic-combination}
  \begin{aligned}
  &-\Lambda^2
  \lambda^\beta f
  \Bigl(
    \ddot F^{pq,rs}\eta_{pq}\eta_{rs}
    +2\dot F^{ij}\check h^{pq}\eta_{ip}\eta_{jq}
  \Bigr)\\
  &\quad\leq
  -\Lambda^2
  \lambda^\beta f
  k\alpha(k\alpha+1)G^{k\alpha-2}
  |\widetilde\nabla_1G|^2.
  \end{aligned}
\end{equation}
Here and below $f$, $f'$, and $f''$ are evaluated at
$\widetilde\rho/\lambda$.  The mixed radial term in
\eqref{eq:inverse-weingarten-11-component} becomes
\begin{equation}\label{eq:mixed-radial-gradient-term}
  -2\Lambda^2\lambda^{\beta-1}f'
    \alpha\sigma_k^{\alpha-1}\dot\sigma_k^{pq}
    \widetilde\nabla_1\widetilde\rho\,
    \widetilde\nabla_1\hat h_{pq} =-2\Lambda^2\lambda^{\beta-1}f'
  \widetilde\nabla_1\widetilde\rho\,
  \widetilde\nabla_1F
  =-2\Lambda^2\lambda^{\beta-1}f'
  k\alpha G^{k\alpha-1}
  \widetilde\nabla_1\widetilde\rho\,
  \widetilde\nabla_1G.
\end{equation}
Combining \eqref{eq:inverse-gradient-quadratic-combination} and
\eqref{eq:mixed-radial-gradient-term}, and completing the square, yields
\begin{equation}\label{eq:completed-square-gradient-bound}
  \begin{aligned}
  &-\Lambda^2k\alpha\lambda^\beta fG^{k\alpha-2}
  \Bigg[
    (k\alpha+1)|\widetilde\nabla_1G|^2
    +2\frac{f'}{\lambda f}G
      \widetilde\nabla_1\widetilde\rho\,
      \widetilde\nabla_1G
  \Bigg]\\
  &\quad\leq
  \Lambda^2\frac{k\alpha}{k\alpha+1}
  \lambda^{\beta-2}\frac{(f')^2}{f}
  F|\widetilde\nabla_1\widetilde\rho|^2.
  \end{aligned}
\end{equation}
Recall
\begin{equation}\label{eq:unscaled-radial-hessian}
  \nabla_i\nabla_j\rho
  =\coth\rho\,
    \bigl(
      g_{ij}-\nabla_i\rho\,\nabla_j\rho
    \bigr)
   -v^{-1}h_{ij}.
\end{equation}
Because $\lambda$ is spatially constant, $\widetilde\nabla=\nabla$ and
\begin{equation*}
  \widetilde\nabla_i\widetilde\nabla_j\widetilde\rho
  =\lambda\nabla_i\nabla_j\rho.
\end{equation*}
Using
\begin{equation*}
  \rho=\frac{\widetilde\rho}{\lambda},
  \qquad
  g_{ij}=\lambda^{-2}\widetilde g_{ij},
  \qquad
  \nabla_i\rho
    =\lambda^{-1}\widetilde\nabla_i\widetilde\rho,
  \qquad
  h_{ij}=\lambda^{-1}\hat h_{ij},
\end{equation*}
we obtain from \eqref{eq:unscaled-radial-hessian}
\begin{equation}\label{eq:scaled-radial-hessian}
  \widetilde\nabla_i\widetilde\nabla_j\widetilde\rho
  =\lambda^{-1}\coth
    \left(\frac{\widetilde\rho}{\lambda}\right)
    \left(
      \widetilde g_{ij}
      -\widetilde\nabla_i\widetilde\rho\,
       \widetilde\nabla_j\widetilde\rho
    \right)
   -v^{-1}\hat h_{ij}.
\end{equation}
Consequently,
\begin{equation}\label{eq:pure-radial-terms-at-maximum}
  \begin{aligned}
  &-\Lambda^2\lambda^{\beta-1}f'\sigma_k^\alpha
    \widetilde\nabla_1\widetilde\nabla_1\widetilde\rho-\Lambda^2\lambda^{\beta-2}f''\sigma_k^\alpha
    |\widetilde\nabla_1\widetilde\rho|^2\\
    =&-\Lambda^2F\Bigg[
   \lambda^{\beta-2}f''
     |\widetilde\nabla_1\widetilde\rho|^2
   +\lambda^{\beta-2}f'\coth
     \left(\frac{\widetilde\rho}{\lambda}\right)
     \left(1-|\widetilde\nabla_1\widetilde\rho|^2\right)
  \Bigg]
  +\Lambda\lambda^{\beta-1}f'v^{-1}F.
  \end{aligned}
\end{equation}
Set
\[
  r:=\frac{\widetilde\rho}{\lambda}=\rho.
\]
By Euler's identity \eqref{eq:lem25-euler-identity-hyperbolic} and
$F=\sigma_k^\alpha$, the zero-order curvature terms satisfy
\begin{equation}\label{eq:euler-zero-order-combination}
\begin{aligned}
  &-\lambda^\beta f(r)\alpha\sigma_k^{\alpha-1}
    \left(
      \dot\sigma_k^{pq}(\hat h^2)_{pq}
      +\lambda^{-2}\dot\sigma_k^{pq}\widetilde g_{pq}
    \right)\Lambda\\
  &\quad+\lambda^\beta f(r)(k\alpha-1)F
    +\lambda^{\beta-2}f(r)(1+k\alpha)F\Lambda^2\\
  ={}&-\lambda^\beta f(r)\alpha\sigma_k^{\alpha-1}\Lambda
    \sum_i\sigma_{k-1}(\hat\kappa\mid i)
       (\hat\kappa_i-\lambda^{-1})^2\\
  &+\lambda^{\beta-2}f(r)F
    \left[
      (1+k\alpha)\Lambda^2-2k\alpha\lambda\Lambda
      +(k\alpha-1)\lambda^2
    \right]\\
  ={}&-\lambda^\beta f(r)\alpha\sigma_k^{\alpha-1}\Lambda
    \sum_i\sigma_{k-1}(\hat\kappa\mid i)
       (\hat\kappa_i-\lambda^{-1})^2\\
  &+\lambda^{\beta-2}f(r)F(\Lambda-\lambda)
    \bigl((1+k\alpha)\Lambda+(1-k\alpha)\lambda\bigr).
\end{aligned}
\end{equation}
Combining \eqref{eq:euler-zero-order-combination} with the
inverse-concavity estimate
\eqref{eq:inverse-gradient-quadratic-combination}, the completed-square
bound \eqref{eq:completed-square-gradient-bound}, and the radial
identity \eqref{eq:pure-radial-terms-at-maximum} in
\eqref{eq:inverse-weingarten-11-component}, we obtain, in the barrier
sense,
\begin{equation}\label{eq:Lambda-after-derivative-estimates}
\begin{aligned}
  \mathcal L\Lambda
  \leq{}&-\Lambda^2\lambda^{\beta-2}F
   \left[
     f'(r)\coth r\,
       \bigl(1-|\widetilde\nabla_1\widetilde\rho|^2\bigr)
     +\left(
       f''(r)-\frac{k\alpha}{1+k\alpha}
          \frac{(f'(r))^2}{f(r)}
      \right)|\widetilde\nabla_1\widetilde\rho|^2
   \right]\\
  &+\Lambda\lambda^{\beta-1}f'(r)
      \l\p_\rho,\nu\r F\\
  &-\lambda^\beta f(r)\alpha\sigma_k^{\alpha-1}\Lambda
    \sum_i\sigma_{k-1}(\hat\kappa\mid i)
       (\hat\kappa_i-\lambda^{-1})^2\\
  &+\lambda^{\beta-2}f(r)F(\Lambda-\lambda)
     \bigl((1+k\alpha)\Lambda+(1-k\alpha)\lambda\bigr)
   +\gamma_0\Lambda.
\end{aligned}
\end{equation}

\medskip
\noindent\emph{Step 3. Preservation of strict $h$-convexity and the maximum-principle estimate.}
We now use the preceding calculation before making any quantitative
estimate for $\Lambda$.  Set
\[
  W:=\lambda^{-1}\Lambda=\frac{1}{\kappa_1}.
\]
Since $\partial_\tau\lambda=\gamma_0\lambda$ and $\lambda$ is spatially
constant,
\begin{equation}\label{eq:W-operator-from-Lambda}
\begin{aligned}
  \mathcal LW
  ={}&\lambda^{-1}
     \bigl(\mathcal L\Lambda-\gamma_0\Lambda\bigr)\\
  \leq{}&-\Lambda^2\lambda^{\beta-3}F
   \left[
     f'(r)\coth r\,
       \bigl(1-|\widetilde\nabla_1\widetilde\rho|^2\bigr)
     +\left(
       f''(r)-\frac{k\alpha}{1+k\alpha}
          \frac{(f'(r))^2}{f(r)}
      \right)|\widetilde\nabla_1\widetilde\rho|^2
   \right]\\
  &+\Lambda\lambda^{\beta-2}f'(r)
      \l\p_\rho,\nu\r F\\
  &-\lambda^{\beta-1}f(r)\alpha\sigma_k^{\alpha-1}\Lambda
    \sum_i\sigma_{k-1}(\hat\kappa\mid i)
       (\hat\kappa_i-\lambda^{-1})^2\\
  &+\lambda^{\beta-3}f(r)F(\Lambda-\lambda)
     \bigl((1+k\alpha)\Lambda+(1-k\alpha)\lambda\bigr).
\end{aligned}
\end{equation}
Suppose that $\tau_h<\tau_{\max}$ and that strict $h$-convexity is
lost at a first point
$(x_0,\tau_0)\in M\times(0,\tau_h]$, i.e.\ $W(x_0,\tau_0)=1$,
equivalently $\Lambda=\lambda$; by the definition of $\tau_h$ one
necessarily has $\tau_0=\tau_h$.  Since
$\tau_0=\tau_h<\tau_{\max}$, the solution is smooth up to $\tau_0$,
and the normalized $C^0$ estimate
\eqref{eq:normalized-C0-estimate-hyperbolic} gives
\[
  \rho\bigl(\cdot,t(\tau_0)\bigr)
  \geq c_0\,\lambda\bigl(t(\tau_0)\bigr)^{-1}>0,
\]
so $o$ remains in the interior of the domain enclosed by
$M_{t(\tau_0)}$, and the strict conclusion of
Lemma~\ref{lem:global-supporting-horosphere} is available at the
contact point.
At this point, \eqref{eq:W-operator-from-Lambda} implies
\begin{equation}\label{eq:W-first-contact-inequality}
\begin{aligned}
  \mathcal LW
  \leq{}&-\lambda^{\beta-1}F
  \Bigg[
    f'(\rho)\Bigl(
      \coth\rho\bigl(1-|\widetilde\nabla_1\widetilde\rho|^2\bigr)
      -\l\p_\rho,\nu\r
    \Bigr)
    +\left(
       f''(\rho)-\frac{k\alpha}{1+k\alpha}
          \frac{(f'(\rho))^2}{f(\rho)}
     \right)|\widetilde\nabla_1\widetilde\rho|^2
  \Bigg].
\end{aligned}
\end{equation}
\medskip
\noindent\emph{Claim 1.} At the contact point,
\begin{equation}\label{eq:contact-radial-gradient-bound}
  0\leq|\widetilde\nabla_1\widetilde\rho|^2
  \leq1-\l\p_\rho,\nu\r^2.
\end{equation}

\begin{proof}[Proof of Claim 1]
Decompose the ambient radial vector into its tangential and normal parts:
\[
  \partial_\rho
  =\nabla^M\rho+\l\p_\rho,\nu\r\nu.
\]
Since $|\partial_\rho|=|\nu|=1$, this gives
\begin{equation}\label{eq:radial-gradient-normal-decomposition}
  |\nabla^M\rho|_g^2
  =1-\l\p_\rho,\nu\r^2.
\end{equation}
If $\widetilde e_1$ is the $\widetilde g$-unit contact direction, then
$e_1:=\lambda\widetilde e_1$ is $g$-unit and, because
$\widetilde\rho=\lambda\rho$ with spatially constant $\lambda$,
\[
  \widetilde e_1(\widetilde\rho)=e_1(\rho).
\]
Therefore
\[
  |\widetilde\nabla_1\widetilde\rho|^2
   =|e_1(\rho)|^2
   \leq|\nabla^M\rho|_g^2
   =1-\l\p_\rho,\nu\r^2,
\]
which proves the claim.
\end{proof}

\medskip
\noindent\emph{Claim 2.} The square-bracketed term in
\eqref{eq:W-first-contact-inequality} is strictly positive.

\begin{proof}[Proof of Claim 2]
By Lemma~\ref{lem:global-supporting-horosphere} and the fact that
$\partial_\rho$ and $\nu$ are unit vectors,
\[
  \tanh\frac{\rho}{2}<\l\p_\rho,\nu\r\leq1.
\]
We first consider the case $\l\p_\rho,\nu\r<1$.
The identity
\[
  2\tanh\frac{\rho}{2}\coth\rho
  =1+\tanh^2\frac{\rho}{2}
\]
gives
\[
  \frac{
    \l\p_\rho,\nu\r
    -\coth\rho\,\l\p_\rho,\nu\r^2
  }{
    1-\l\p_\rho,\nu\r^2
  }
  -\frac12\tanh\frac{\rho}{2}
  =-\frac{
    \bigl(\l\p_\rho,\nu\r-\tanh(\rho/2)\bigr)^2
  }{
    2\tanh(\rho/2)\bigl(1-\l\p_\rho,\nu\r^2\bigr)
  }<0.
\]
Hence
\begin{equation}\label{eq:hyperbolic-contact-coefficient-inequality}
  \frac{
    \l\p_\rho,\nu\r
    -\coth\rho\,\l\p_\rho,\nu\r^2
  }{
    1-\l\p_\rho,\nu\r^2
  }
  <\frac12\tanh\frac{\rho}{2}.
\end{equation}
View the square-bracketed term as an affine function of
$s:=|\widetilde\nabla_1\widetilde\rho|^2$; its minimum on the
interval $[0,1-\l\p_\rho,\nu\r^2]$ is attained at one of the
endpoints.  It is therefore enough to check both endpoints.  At $s=0$ the value is
\[
  f'(\rho)\bigl(\coth\rho-\l\p_\rho,\nu\r\bigr)>0.
\]
Indeed, both $\partial_\rho$ and $\nu$ are unit vectors, so
$\l\p_\rho,\nu\r\leq1$, whereas $\rho>0$ implies
$\coth\rho>1$.  Hence
$\coth\rho-\l\p_\rho,\nu\r>0$, and the strict inequality above follows
from $f'(\rho)>0$.
At $s=1-\l\p_\rho,\nu\r^2$ the value is
\[
\begin{aligned}
 &f'(\rho)\Bigl(
   \l\p_\rho,\nu\r^2\coth\rho-\l\p_\rho,\nu\r
 \Bigr)
 +\bigl(1-\l\p_\rho,\nu\r^2\bigr)
 \left(
   f''(\rho)-\frac{k\alpha}{1+k\alpha}
      \frac{(f'(\rho))^2}{f(\rho)}
 \right)\\
 &\quad=\bigl(1-\l\p_\rho,\nu\r^2\bigr)\left[
   f''(\rho)-\frac{k\alpha}{1+k\alpha}
      \frac{(f'(\rho))^2}{f(\rho)}
   -f'(\rho)
    \frac{
      \l\p_\rho,\nu\r
      -\coth\rho\,\l\p_\rho,\nu\r^2
    }{
      1-\l\p_\rho,\nu\r^2
    }
 \right]\\
 &\quad>\bigl(1-\l\p_\rho,\nu\r^2\bigr)\left[
   f''(\rho)-\frac{k\alpha}{1+k\alpha}
      \frac{(f'(\rho))^2}{f(\rho)}
   -\frac12\tanh\frac\rho2\,f'(\rho)
 \right]\geq0
\end{aligned}
\]
Here the strict inequality follows from
\eqref{eq:hyperbolic-contact-coefficient-inequality} and
$f'(\rho)>0$, while the final inequality follows from
\eqref{eq:hyperbolic-profile-contact-condition}.  If
$\l\p_\rho,\nu\r=1$, then
$|\widetilde\nabla_1\widetilde\rho|^2=0$, and the square-bracketed term
equals
\[
  f'(\rho)(\coth\rho-1)>0.
\]
This proves the claim.
\end{proof}
It follows from Claim~2 and \eqref{eq:W-first-contact-inequality} that
$\mathcal LW<0$ at the first contact, contradicting the parabolic
first-contact inequality.  Thus strict $h$-convexity is preserved.

\medskip
\noindent\emph{Step 4. The quantitative lower curvature bound.}
We start directly from \eqref{eq:Lambda-after-derivative-estimates}.
By Step~3, strict $h$-convexity is preserved, and hence
$0<\Lambda\leq\lambda$.  The coefficient bounds and
$|\l\p_\rho,\nu\r|\leq1$ give
\[
  \Lambda\lambda^{\beta-1}f'(r)\l\p_\rho,\nu\r F
  =\widetilde\Phi\Lambda\frac{f'(r)}{\lambda f(r)}
     \l\p_\rho,\nu\r
  \leq C\widetilde\Phi\Lambda.
\]
Moreover, set
\[
  x:=\frac{\Lambda}{\lambda}\in(0,1],
  \qquad m:=k\alpha>0.
\]
The polynomial
\[
  (x-1)\bigl((1+m)x+1-m\bigr)
\]
is bounded above on $[0,1]$ by a constant depending only on $m$.
Consequently,
\[
\begin{aligned}
  &\lambda^{\beta-2}f(r)F(\Lambda-\lambda)
     \bigl((1+k\alpha)\Lambda+(1-k\alpha)\lambda\bigr)\\
  &\qquad
  =\widetilde\Phi
     \left(\frac{\Lambda}{\lambda}-1\right)
     \left((1+k\alpha)\frac{\Lambda}{\lambda}+1-k\alpha\right)
  \leq C\widetilde\Phi.
\end{aligned}
\]
The square term in \eqref{eq:Lambda-after-derivative-estimates} is
nonpositive and may be discarded, while $\gamma_0\Lambda\leq C\Lambda$.
Thus \eqref{eq:Lambda-after-derivative-estimates} directly yields
\begin{equation}\label{eq:Lambda-key-coefficient}
  \begin{aligned}
  \mathcal L\Lambda
  \leq{}&C\widetilde\Phi\Lambda+C\widetilde\Phi+C\Lambda\\
  &-\Lambda^2\lambda^{\beta-2}F\Bigg[
    f'(r)\coth r\,
      \bigl(1-|\widetilde\nabla_1\widetilde\rho|^2\bigr)\\
  &\hspace{26mm}
    +\left(
      f''(r)-\frac{k\alpha}{1+k\alpha}
        \frac{(f'(r))^2}{f(r)}
     \right)|\widetilde\nabla_1\widetilde\rho|^2
  \Bigg].
  \end{aligned}
\end{equation}

The gradient estimate gives
\begin{equation}\label{eq:radial-gradient-good-factor}
  1-|\widetilde\nabla_1\widetilde\rho|^2
  \geq1-|\widetilde\nabla^M\widetilde\rho|^2
  =v^{-2}\geq c_0>0.
\end{equation}

\medskip
\noindent\emph{Coercivity and the maximum principle.}
Choose $R\in(0,\infty)$ so that
$0<r=\widetilde\rho/\lambda\leq R$ on $[0,\tau_h)$.  Lemmas
\ref{lem:model-profile-comparison-hyperbolic},
\ref{lem:profile-logarithmic-derivative-bound-hyperbolic}, and
\ref{lem:profile-second-derivative-bound-hyperbolic} imply that, for
every $r\in(0,R]$,
\begin{equation}\label{eq:general-f-structural-growth}
  \begin{aligned}
  c_f r^\beta&\leq f(r)\leq C_f r^\beta,\\
  c_f r^{\beta-1}&\leq f'(r)\leq C_f r^{\beta-1},\\
  |f''(r)|&\leq C_f r^{\beta-2},
  \end{aligned}
\end{equation}
The root-convexity assumption in the corresponding theorem gives
\begin{equation}\label{eq:general-f-root-convexity}
  \left(f^{\frac{1}{1+k\alpha}}\right)''
  \geq\frac12\tanh\frac r2
  \left(f^{\frac{1}{1+k\alpha}}\right)'\geq0.
\end{equation}
The latter condition is equivalent to
\begin{equation}\label{eq:general-f-inverse-concavity-combination}
  f''-\frac{k\alpha}{1+k\alpha}\frac{(f')^2}{f}
  =(1+k\alpha)f^{\frac{k\alpha}{1+k\alpha}}
    \left(f^{\frac{1}{1+k\alpha}}\right)''
  \geq0.
\end{equation}

By \eqref{eq:radial-gradient-good-factor}, there is a fixed
$\vartheta>0$ such that
\[
  1-|\widetilde\nabla_1\widetilde\rho|^2\geq\vartheta.
\]
Therefore the bracket in
\eqref{eq:Lambda-key-coefficient} satisfies
\begin{equation}\label{eq:general-f-uniform-bracket}
  \begin{aligned}
  &\lambda^{\beta-2}\left[
    f'(r)\coth r\,
      \bigl(1-|\widetilde\nabla_1\widetilde\rho|^2\bigr)
    +\left(
      f''-\frac{k\alpha}{1+k\alpha}\frac{(f')^2}{f}
     \right)|\widetilde\nabla_1\widetilde\rho|^2
  \right]\\
  &\quad\geq
  \vartheta\lambda^{\beta-2}f'(r)\coth r\\
  &\quad\geq
  c\lambda^{\beta-2}r^{\beta-1}\coth r\\
  &\quad\geq
  c\lambda^{\beta-2}r^{\beta-2}
  =c\widetilde\rho^{\beta-2}
  \geq c.
  \end{aligned}
\end{equation}
Here we used the positive lower bound
$r\coth r\geq1$ on $(0,R]$ and the two-sided $C^0$ bound for
$\widetilde\rho$.  Since
\[
  \widetilde\Phi=\lambda^\beta f(r)F
  \quad\text{and}\quad
  \lambda^\beta f(r)\leq C,
\]
the negative quadratic term in
\eqref{eq:Lambda-key-coefficient} is bounded above by
$-c\widetilde\Phi\Lambda^2$.  We therefore obtain
\begin{equation}\label{eq:Lambda-general-profile-all-time-inequality}
  \mathcal L\Lambda
  \leq-c\widetilde\Phi\Lambda^2
       +C\widetilde\Phi\Lambda
       +C\widetilde\Phi+C\Lambda.
\end{equation}
This is the point at which the positive lower speed bound, rather than
an upper speed bound, is used.  By
\eqref{eq:normalized-speed-lower-bound-hyperbolic},
\[
  C\Lambda
  \leq\frac{C}{c_{\widetilde\Phi}}
       \widetilde\Phi\Lambda.
\]
After changing $C$, \eqref{eq:Lambda-general-profile-all-time-inequality}
becomes
\begin{equation}\label{eq:Lambda-factorized-by-speed-hyperbolic}
  \mathcal L\Lambda
  \leq\widetilde\Phi
       \left(-c\Lambda^2+C\Lambda+C\right).
\end{equation}
Choose $R_*>1$, depending only on the constants $c,C$ in
\eqref{eq:Lambda-factorized-by-speed-hyperbolic}, so that
\[
  -cs^2+Cs+C<0\qquad\text{for every }s\geq R_*.
\]
No upper bound for $\widetilde\Phi$ is needed here, since
$\widetilde\Phi>0$.  The barrier maximum principle gives the estimate
\begin{equation}\label{eq:Lambda-maximum-principle-bound}
  \sup_{\mathbb S^n\times[0,\tau_h)}\Lambda
  \leq
  \max\left\{\sup_{\mathbb S^n}\Lambda(\cdot,0),R_*\right\}
  \leq R_*,
\end{equation}
where the last inequality uses
\[
  \sup_{\mathbb S^n}\Lambda(\cdot,0)
  =\frac{1}{\min_{M_0}\kappa_1(\cdot,0)}
  <1<R_*,
\]
since $\min_{M_0}\kappa_1(\cdot,0)\geq1+\varepsilon_h(M_0)>1$.
Consequently,
\begin{equation}\label{eq:minimum-curvature-bound}
  \Lambda\leq R_*,
  \qquad
  \hat\kappa_i\geq R_*^{-1}
  \qquad\text{on }\mathbb S^n\times[0,\tau_h),
\end{equation}
where $R_*$ is independent of $\tau_h$ and of the strict
$h$-convexity margin $\varepsilon_h(M_0)$ of the initial
hypersurface.

Suppose that $\tau_h<\tau_{\max}$.  Since the solution is smooth at
$\tau_h$, compactness, continuity, and the maximality of $\tau_h$
imply that at $\tau_h$ either strict $h$-convexity fails,
\[
  \min_{M_{t(\tau_h)}}\min_{1\leq i\leq n}\kappa_i=1,
\]
or enclosure fails, $o\in\partial\Omega_{\tau_h}$.  The second
alternative is impossible: the normalized $C^0$ estimate
\eqref{eq:normalized-C0-estimate-hyperbolic} gives
\[
  \min_{\mathbb S^n}\rho\bigl(\cdot,t(\tau_h)\bigr)
  \geq c_0\,\lambda\bigl(t(\tau_h)\bigr)^{-1}
  =c_0e^{-\gamma_0\tau_h}>0,
\]
so $o$ remains in the
interior of the domain enclosed by $M_{t(\tau_h)}$.  Therefore the
first alternative holds, and the
first-contact argument in Step~3 applies at time $\tau_h$ and gives a
contradiction.  Hence $\tau_h=\tau_{\max}$, and
\eqref{eq:minimum-curvature-bound} gives the desired positive lower
bound for the normalized principal curvatures throughout the maximal
existence interval $[0,\tau_{\max})$.
\end{proof}

\begin{lemma}[Upper bound for the normalized speed]
\label{lem:normalized-speed-upper-bound-hyperbolic}
Assume that the conditions of either Theorem~\ref{thm:main} or
Theorem~\ref{thm:main-critical} hold.  Then there exists a constant
$C>0$, independent of $\varepsilon_h(M_0)^{-1}$ and of
$\tau_{\max}$, such that
\begin{equation}\label{eq:C2-normalized-speed-upper-bound-hyperbolic}
  \widetilde\Phi\leq C
  \qquad\text{on }\mathbb S^n\times[0,\tau_{\max}).
\end{equation}
\end{lemma}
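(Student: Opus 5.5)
The plan is the one announced in the outline of Section 5. We apply the maximum principle to the Tso-type quotient
\[
  Z:=\frac{\widetilde\Phi}{\widetilde u-a},
  \qquad a:=\tfrac12\inf\widetilde u ,
\]
where $\widetilde u\geq c$ is the lower bound from Lemma~\ref{lem:normalized-C1-estimate-hyperbolic}. The same lemma and Corollary~\ref{cor:lambda-sin-rho-two-sided-hyperbolic} give $\widetilde u\leq\lambda\sinh\rho\leq C$. Hence $\widetilde u-a$ is pinched between two positive constants, and a bound for $Z$ is equivalent to a bound for $\widetilde\Phi$.

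First we compute $\mathcal L Z$ from Lemma~\ref{lem:basic-normalized-speed-evolution-hyperbolic} and Lemma~\ref{lem:basic-normalized-support-evolution-hyperbolic}. At a spatial maximum of $Z$ we have $\widetilde\nabla\widetilde\Phi=Z\widetilde\nabla\widetilde u$, so the cross gradient terms cancel in the usual way. This yields
\[
  \mathcal L Z\leq \frac{\mathcal L\widetilde\Phi}{\widetilde u-a}-\frac{\widetilde\Phi\,\mathcal L\widetilde u}{(\widetilde u-a)^2}.
\]
Next we substitute the two evolution equations. The quadratic curvature terms are $\lambda^\beta f\widetilde\Phi\dot F^{ij}(\hat h^2)_{ij}$ with coefficient $1/(\widetilde u-a)$ from the speed equation, and $-\widetilde u\,\lambda^\beta f\widetilde\Phi\dot F^{ij}(\hat h^2)_{ij}/(\widetilde u-a)^2$ from the support equation. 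They combine into
\[
  -\frac{a\,\lambda^\beta f\,\widetilde\Phi\,\dot F^{ij}(\hat h^2)_{ij}}{(\widetilde u-a)^2}\leq 0 ,
\]
which we discard. The support equation also contributes the good term $+(1+k\alpha)\cosh\rho\,\widetilde\Phi^2/(\widetilde u-a)^2$, which is at least $cZ^2$.

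The remaining terms are all at most linear in $Z$. From the speed equation they are $(\beta-k\alpha)\gamma_0 Z$ and $-\tfrac{f'}{\lambda f v}\widetilde\Phi^2/(\widetilde u-a)$. The latter is nonpositive because $f'>0$, by Lemma~\ref{lem:profile-logarithmic-derivative-bound-hyperbolic}, so it can simply be dropped. The term $-\lambda^{\beta-2}f\widetilde\Phi\dot F^{ij}\widetilde g_{ij}$ is also nonpositive and is dropped. From the support equation we get $-\gamma_0\widetilde u Z/(\widetilde u-a)\leq 0$. The last term is $-\lambda^\beta f' F\sinh\rho(1-v^{-2})\,Z/(\widetilde u-a)$, which is nonpositive, or at any rate bounded by $CZ$ after writing it as $\widetilde\Phi\frac{f'}{\lambda f}\lambda\sinh\rho(1-v^{-2})$ and using \eqref{eq:normalized-profile-coefficient-bounds}. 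We therefore expect
\[
  \frac{d}{d\tau}Z_{\max}\leq CZ_{\max}-cZ_{\max}^2,
\]
so $Z\leq\max\{Z(0),C/c\}$. The constant depends only on the $C^0$ and $C^1$ data and the initial speed range, and not on $\varepsilon_h(M_0)^{-1}$ or $\tau_{\max}$.

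The main obstacle is the sign bookkeeping. The term $-(1+k\alpha)\cosh\rho\,\widetilde\Phi$ in $\mathcal L\widetilde u$ must really produce $+cZ^2$ when it enters with the minus sign. The hyperbolic zero-order terms $\lambda^{-2}$ must be checked to carry favourable or bounded signs; here strict $h$-convexity from Lemma~\ref{lem:h-convexity-and-lower-curvature-hyperbolic} gives $\hat\kappa_i>\lambda^{-1}$, which helps if needed. If the quadratic combination fails to have a sign for this choice of $a$, the fallback is to take $a$ smaller or to use $\log\widetilde\Phi-\log(\widetilde u-a)$. The essential point remains that the support equation yields a $Z^2$ reaction dominating every other term.
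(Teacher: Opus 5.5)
There is a genuine gap: a sign error at the decisive step. Your setup otherwise matches the paper, which uses the equivalent quantity $Q=\log\widetilde\Phi-\log(\widetilde u-b)$: the Tso quotient with shift $a=\frac12\inf\widetilde u$, cancellation of the gradient terms at a maximum, and the merging of the $\dot F^{ij}(\hat h^2)_{ij}$ terms into $-a\lambda^\beta f\widetilde\Phi\dot F^{ij}(\hat h^2)_{ij}/(\widetilde u-a)^2$.

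The term $-(1+k\alpha)\cosh\rho\,\widetilde\Phi$ in $\mathcal L\widetilde u$ enters $\mathcal LZ$ through $-\widetilde\Phi\,\mathcal L\widetilde u/(\widetilde u-a)^2$, so it appears as $+(1+k\alpha)\cosh\rho\,Z^2$. At a maximum this is a positive contribution, so it pushes $Z_{\max}$ up rather than down. After dropping the genuinely nonpositive terms, what you actually obtain is
\[
  \frac{d}{d\tau}Z_{\max}\leq CZ_{\max}+CZ_{\max}^2
  -\frac{a\,\lambda^\beta f\,\widetilde\Phi\,\dot F^{ij}(\hat h^2)_{ij}}{(\widetilde u-a)^2}.
\]
If you discard the last term, as you propose, you are left with a Riccati-type inequality that permits blow-up. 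So the claimed $\frac{d}{d\tau}Z_{\max}\leq CZ_{\max}-cZ_{\max}^2$ is false. Neither shrinking $a$ nor switching to logarithms repairs this: the log form produces the same bad term $+(1+k\alpha)\cosh\rho\,\widetilde\Phi/(\widetilde u-a)$.

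The missing idea is that the coercive term is exactly the one you threw away. With $a=0$ the $\dot F^{ij}(\hat h^2)_{ij}$ terms cancel completely; the only purpose of the shift is to keep a negative multiple of this term. By Newton--Maclaurin and the homogeneity of $F=\sigma_k^\alpha$, one has $\dot F^{ij}(\hat h^2)_{ij}\geq cF^{1+1/(k\alpha)}$. Combined with the two-sided bounds \eqref{eq:normalized-profile-coefficient-bounds} on $\lambda^\beta f$, this gives $\lambda^\beta f\,\dot F^{ij}(\hat h^2)_{ij}\geq c\widetilde\Phi^{1+1/(k\alpha)}$. Since $\widetilde u-a$ is pinched between positive constants, the retained term is at most $-cZ^{2+1/(k\alpha)}$. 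This is superquadratic and dominates $CZ+CZ^2$ for large $Z$, so the maximum principle closes, giving $Z\leq\max\{Z(0),Z_*\}$ for a suitable $Z_*$. This is precisely the paper's argument, written there in log form as $\mathcal LQ\leq C+C\widetilde\Phi-c\widetilde\Phi^{1+1/(k\alpha)}$.
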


\begin{proof}
By Lemma~\ref{lem:h-convexity-and-lower-curvature-hyperbolic}, the
solution remains strictly $h$-convex throughout its maximal existence
interval $[0,\tau_{\max})$.  The $C^1$ estimate gives two-sided positive bounds for the
normalized support function $\widetilde u$.  Set
\begin{equation}\label{eq:C2-speed-test-function-hyperbolic}
  b:=\frac12\inf_{\mathbb S^n\times[0,\tau_{\max})}\widetilde u,
  \qquad
  z:=\widetilde u-b,
  \qquad
  Q:=\log\widetilde\Phi-\log z.
\end{equation}
Thus $z$ is bounded above and below by positive constants.

At a spatial maximum of $Q$, one has
$\widetilde\nabla\log\widetilde\Phi=\widetilde\nabla\log z$;
hence the gradient-square terms in $\mathcal LQ$ cancel.  Using
\eqref{eq:basic-normalized-speed-evolution-hyperbolic} and
\eqref{eq:basic-normalized-support-evolution-hyperbolic}, we obtain
\begin{equation}\label{eq:C2-speed-quotient-evolution-hyperbolic}
  \begin{aligned}
  \mathcal LQ
  ={}&(\beta-k\alpha)\gamma_0
       -\gamma_0\frac{\widetilde u}{z}
       -\frac{f'(\rho)}{\lambda f(\rho)v}\widetilde\Phi
       +\frac{(1+k\alpha)\cosh\rho}{z}\widetilde\Phi\\
    &-\frac{b}{z}\lambda^\beta f(\rho)
       \dot F^{ij}(\hat h^2)_{ij}
       -\lambda^{\beta-2}f(\rho)
       \dot F^{ij}\widetilde g_{ij}\\
    &-\frac{\lambda^\beta f'(\rho)F\sinh\rho}{z}
       (1-v^{-2}).
  \end{aligned}
\end{equation}
Since $f'>0$ and $v\geq1$, the third and the last terms on the
right-hand side are nonpositive.  The term containing
$\dot F^{ij}\widetilde g_{ij}$ is also nonpositive.  The remaining
coefficients are uniformly bounded, and hence
\begin{equation}\label{eq:C2-speed-quotient-upper-bound-hyperbolic}
  \mathcal LQ
  \leq C+C\widetilde\Phi
       -\frac{b}{z}\lambda^\beta f(\rho)
        \dot F^{ij}(\hat h^2)_{ij}.
\end{equation}

The Newton--Maclaurin inequalities, the homogeneity of
$F=\sigma_k^\alpha$, and
\eqref{eq:normalized-profile-coefficient-bounds} imply
\begin{equation}\label{eq:C2-speed-coercive-term-hyperbolic}
  \lambda^\beta f(\rho)
  \dot F^{ij}(\hat h^2)_{ij}
  \geq c\widetilde\Phi^{1+\frac1{k\alpha}}.
\end{equation}
It follows that, at a spatial maximum of $Q$,
\[
  \mathcal LQ
  \leq C+C\widetilde\Phi
       -c\widetilde\Phi^{1+\frac1{k\alpha}}.
\]
The right-hand side is negative whenever $\widetilde\Phi$ is sufficiently
large.  Since $z$ has two-sided positive bounds, the parabolic maximum
principle applied to $Q$ gives
\[
  \widetilde\Phi\leq C
  \qquad\text{on }\mathbb S^n\times[0,\tau_{\max}).
\]
This proves \eqref{eq:C2-normalized-speed-upper-bound-hyperbolic}.
\end{proof}

\begin{lemma}[The normalized $C^2$ estimate]
\label{lem:normalized-C2-estimate-hyperbolic}
Assume that the conditions of either Theorem~\ref{thm:main} or
Theorem~\ref{thm:main-critical} hold. Then there exists a constant
$C>0$, independent of $\varepsilon_h(M_0)^{-1}$ and of
$\tau_{\max}$, such that
\begin{equation}\label{eq:lemma54-full-C2-conclusions-hyperbolic}
  \widetilde\Phi\leq C,
  \qquad
  C^{-1}\leq\hat\kappa_i\leq C,
  \qquad
  |\nabla_\sigma^2\widetilde\rho|_\sigma\leq C.
\end{equation}
These estimates hold throughout the maximal existence interval
$[0,\tau_{\max})$.
\end{lemma}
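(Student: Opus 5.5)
The three bounds follow by combining the speed bounds with the lower curvature bound, and then reading off the Hessian of $\widetilde\rho$ from the radial graph formula. First, $\widetilde\Phi\leq C$ is exactly Lemma~\ref{lem:normalized-speed-upper-bound-hyperbolic}. Next, Lemma~\ref{lem:h-convexity-and-lower-curvature-hyperbolic} gives $\Lambda\leq C$, so $\hat\kappa_i\geq C^{-1}$ for every $i$ on $[0,\tau_{\max})$. For the upper curvature bound, I would use \eqref{eq:normalized-profile-coefficient-bounds}, which gives $\lambda^\beta f(\rho)\geq c$. Then $\widetilde\Phi\leq C$ yields $\sigma_k(\hat h)\leq C$.

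Since all $\hat\kappa_i\geq C^{-1}>0$, I would order $\hat\kappa_1\leq\cdots\leq\hat\kappa_n$ and estimate
\[
  \sigma_k(\hat\kappa)\geq\hat\kappa_n\hat\kappa_1^{k-1}\geq C^{-(k-1)}\hat\kappa_n ,
\]
because every term in $\sigma_k$ is positive. Hence $\hat\kappa_n\leq C$, and the two-sided curvature bound follows. All constants come from lemmas already stated as independent of $\varepsilon_h(M_0)^{-1}$ and $\tau_{\max}$.

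For the Hessian bound, I would use the scaled radial Hessian identity \eqref{eq:scaled-radial-hessian} in covariant form with respect to $\widetilde g$:
\[
  \widetilde\nabla_i\widetilde\nabla_j\widetilde\rho=\lambda^{-1}\coth(\widetilde\rho/\lambda)\bigl(\widetilde g_{ij}-\widetilde\nabla_i\widetilde\rho\,\widetilde\nabla_j\widetilde\rho\bigr)-v^{-1}\hat h_{ij}.
\]
Here $\lambda^{-1}\coth(\widetilde\rho/\lambda)=(\widetilde\rho\tanh\rho/\rho)^{-1}$ is bounded because $\widetilde\rho\geq c_0$ and $\rho\leq R_0$. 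Together with the $C^1$ bound of Lemma~\ref{lem:normalized-C1-estimate-hyperbolic} and $|\hat h|_{\widetilde g}\leq C$, this makes $|\widetilde\nabla^2\widetilde\rho|_{\widetilde g}$ bounded.

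The remaining, and main, step is to convert this to the spherical Hessian. The metric $\widetilde g=\lambda^2 g$ has components $\lambda^2\sinh^2\rho\,\sigma_{ij}+\widetilde\rho_i\widetilde\rho_j\,(\lambda\sinh\rho/\widetilde\rho)^2\cdot(\widetilde\rho/\lambda\rho)^2/\ldots$; more simply,
\[
  \widetilde g_{ij}=(\lambda\sinh\rho)^2\sigma_{ij}+\widetilde\rho_i\widetilde\rho_j .
\]
By Corollary~\ref{cor:lambda-sin-rho-two-sided-hyperbolic} and the gradient bound, $\widetilde g$ is uniformly equivalent to $\sigma$. For the Hessians I would write $\widetilde\nabla^2\widetilde\rho=\nabla_\sigma^2\widetilde\rho-(\widetilde\Gamma-\Gamma_\sigma)*\nabla\widetilde\rho$. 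The Christoffel difference involves $\partial(\lambda\sinh\rho)^2=2\lambda^2\sinh\rho\cosh\rho\,\rho_k$, which is bounded by $C\lambda\sinh\rho\,|\nabla\widetilde\rho|$, and terms $\widetilde\rho_{ik}\widetilde\rho_j$ that are linear in $\nabla^2_\sigma\widetilde\rho$ with coefficients containing $\widetilde g^{-1}\nabla\widetilde\rho$.

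Since these terms enter linearly, I would avoid them by working directly from \eqref{eq:weingarten-map-rho} instead. Writing it in normalized variables and solving for $-\rho_i{}^j+\frac{\rho^j\rho^k\rho_{ki}}{\sinh^2\rho\,v^2}$, the left side is the matrix $(\delta-\frac{\nabla\rho\otimes\nabla\rho}{\sinh^2\rho\,v^2})\nabla_\sigma^2\rho$. Its prefactor has eigenvalues in $[v^{-2},1]$ and is therefore invertible with bounded inverse. Multiplying by $\lambda$ and using $\lambda\cdot v\sinh^2\rho\,h=v(\lambda\sinh\rho)^2\hat h$ gives $|\nabla^2_\sigma\widetilde\rho|_\sigma\leq C$.
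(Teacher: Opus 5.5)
Your proposal is correct and follows essentially the same route as the paper: the speed upper bound and the lower curvature bound from Lemma~\ref{lem:h-convexity-and-lower-curvature-hyperbolic} give $\sigma_k(\hat h)\leq C$ and hence $\hat\kappa_i\leq C$, and the spherical Hessian is then read off from the radial graph formula \eqref{eq:weingarten-map-rho}, whose coefficient matrix has eigenvalues $1$ and $v^{-2}$ and is therefore boundedly invertible by the $C^1$ estimate. The covariant-Hessian detour you sketch before switching to \eqref{eq:weingarten-map-rho} is unnecessary and can simply be dropped.
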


\begin{proof}
By Lemmas~\ref{lem:h-convexity-and-lower-curvature-hyperbolic} and
\ref{lem:normalized-speed-upper-bound-hyperbolic},
\begin{equation}\label{eq:C2-speed-and-lower-curvature-hyperbolic}
  \widetilde\Phi\leq C,
  \qquad
  \hat\kappa_i\geq c>0.
\end{equation}
Since
$\widetilde\Phi=\lambda^\beta f(\rho)\sigma_k^\alpha(\hat h)$
and \eqref{eq:normalized-profile-coefficient-bounds} gives two-sided
positive bounds for $\lambda^\beta f(\rho)$, the speed bound implies
\[
  \sigma_k(\hat h)\leq C.
\]
On the other hand, the lower curvature bound in
\eqref{eq:C2-speed-and-lower-curvature-hyperbolic} gives, for every $i$,
\[
  \sigma_k(\hat h)\geq c_1\hat\kappa_i,
\]
where $c_1>0$ depends only on $n$, $k$, and the lower curvature bound.
Consequently,
\[
  C^{-1}\leq\hat\kappa_i\leq C.
\]

It remains to control the Hessian on $(\mathbb S^n,\sigma)$.  Since
$\rho=\widetilde\rho/\lambda$ and
$\hat h_i{}^j=\lambda^{-1}h_i{}^j$, the radial graph formula
\eqref{eq:weingarten-map-rho} is equivalent to
\begin{equation}\label{eq:C2-hyperbolic-Hessian-from-Weingarten}
  \begin{aligned}
  \left(
    \sigma^{jp}
    -\frac{\widetilde\rho^j\widetilde\rho^p}
      {\lambda^2\sinh^2\rho\,v^2}
  \right)\widetilde\rho_{pi}
  ={}&\lambda\sinh\rho\cosh\rho\,\delta_i{}^j\\
  &+\frac{\coth\rho}{\lambda v^2}
      \widetilde\rho_i\widetilde\rho^j
    -\lambda^2v\sinh^2\rho\,\hat h_i{}^j.
  \end{aligned}
\end{equation}
Here all derivatives are taken with respect to $\sigma$.  The
coefficient matrix on the left-hand side has eigenvalues $1$ in the
directions orthogonal to $\nabla\widetilde\rho$ and $v^{-2}$ in its
gradient direction.  It and its inverse are therefore uniformly
bounded by the $C^1$ estimate.  The $C^0$ estimate and the two-sided
curvature bound control every term on the right-hand side.  Hence
\[
  |\nabla_\sigma^2\widetilde\rho|_\sigma\leq C.
\]
This proves \eqref{eq:lemma54-full-C2-conclusions-hyperbolic}.
\end{proof}

\begin{proposition}[Higher regularity and continuation]
\label{prop:C2-to-Cinfty-normalized-hyperbolic}
Let $\widetilde\rho$ be a smooth solution of
\eqref{eq:normalized-radial-flow-hat-h} on its maximal normalized time
interval $[0,\tau_{\max})$, and suppose that the conclusions of the
normalized $C^0$, $C^1$, and $C^2$ estimates hold.  Then
$\tau_{\max}=\infty$.  If, in
addition, the radial weight satisfies
\eqref{eq:profile-symbol-bounds-all-orders}, then, for every integer
$m\geq0$, there exists a constant $C_m>0$ such that
\begin{equation}\label{eq:all-higher-order-estimates-hyperbolic}
  \|\widetilde\rho(\cdot,\tau)\|_{C^m(\mathbb S^n)}
  \leq C_m,
  \qquad \tau\geq0.
\end{equation}
\end{proposition}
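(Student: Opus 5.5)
We write the normalized equation \eqref{eq:normalized-radial-flow-hat-h} as a fully nonlinear scalar parabolic equation on $\mathbb S^n$,
\[
  \partial_\tau\widetilde\rho
  =\gamma_0\widetilde\rho
   -\Psi\bigl(\theta,\tau,\widetilde\rho,\nabla\widetilde\rho,\nabla^2\widetilde\rho\bigr),
  \qquad
  \Psi:=\lambda^\beta f\bigl(\widetilde\rho/\lambda\bigr)\,
        \sigma_k^\alpha(\hat h)\,v,
\]
where $\hat h$ is expressed through \eqref{eq:C2-hyperbolic-Hessian-from-Weingarten} as an affine function of $\nabla^2\widetilde\rho$. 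The plan is to verify that this equation is uniformly parabolic with concave structure and coefficients uniformly bounded in the scale-invariant sense. Then Krylov--Safonov/Evans--Krylov, Schauder theory, and bootstrapping give \eqref{eq:all-higher-order-estimates-hyperbolic}. Continuation follows from the standard short-time existence theory.

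\textbf{Uniform parabolicity and concavity.} By Lemma~\ref{lem:normalized-C2-estimate-hyperbolic}, the eigenvalues $\hat\kappa_i$ stay in a fixed compact set $K\subset\Gamma_n^+$, uniformly in $\tau$. We replace $\sigma_k^\alpha$ by $G=\sigma_k^{1/k}$ and write the equation as
\[
  \Theta\bigl(\partial_\tau\widetilde\rho,\ldots\bigr)
  :=-\Bigl(\frac{\gamma_0\widetilde\rho-\partial_\tau\widetilde\rho}
                {\lambda^\beta f\,v}\Bigr)^{1/(k\alpha)}
    +G(\hat h)=0 .
\]
This equation is concave in $\nabla^2\widetilde\rho$, because $G$ is concave and $\hat h$ depends affinely on the Hessian. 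It is uniformly elliptic on $K$. It is strictly monotone in $\partial_\tau\widetilde\rho$, because the quantity $\gamma_0\widetilde\rho-\partial_\tau\widetilde\rho=\widetilde\Phi v$ is pinched between positive constants by the speed bounds of Lemmas~\ref{lem:normalized-speed-lower-bound-hyperbolic} and~\ref{lem:normalized-speed-upper-bound-hyperbolic}.

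\textbf{Uniform coefficient bounds.} The coefficients involve $\lambda\sinh(\widetilde\rho/\lambda)$, $\lambda^{-1}\coth(\widetilde\rho/\lambda)$ and $\lambda^{\beta-j}f^{(j)}(\widetilde\rho/\lambda)$. The hyperbolic factors are smooth functions of $(\widetilde\rho,\lambda^{-1})$ on $[c_0,C_0]\times[0,1]$; for example, $\lambda\sinh(s/\lambda)=s\,\mathrm{sinhc}(s/\lambda)$. For the weight, the chain rule gives
\[
  \partial_{\widetilde\rho}^j\bigl[\lambda^\beta f(\widetilde\rho/\lambda)\bigr]
  =\lambda^{\beta-j}f^{(j)}(r),
  \qquad r=\widetilde\rho/\lambda,
\]
which is bounded by $\mathcal P_j(R_0)\,\widetilde\rho^{\,\beta-j}$, and hence uniformly in $\tau$, by \eqref{eq:profile-symbol-bounds-all-orders}.

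\textbf{Regularity estimates.} With these bounds, Krylov--Safonov applied to the derivative equations gives a $C^{1,\alpha}$ estimate. Evans--Krylov for concave equations then gives a uniform $C^{2,\alpha}$ bound in space-time on unit intervals $[\tau_0,\tau_0+1]$. Differentiating the equation yields linear uniformly parabolic equations with $C^\alpha$ coefficients, and Schauder estimates bootstrap this to all orders. Near $\tau=0$ we use the smooth initial data. The resulting constants depend on $\mathcal P_j$ and on initial norms, but not on $\tau_0$.

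\textbf{Continuation.} Suppose $\tau_{\max}<\infty$. We need only finitely many derivatives of $f$ on a compact range, since $\rho\geq c_0e^{-\gamma_0\tau_{\max}}>0$ keeps us away from the singularity of $f$ at $0$. Hence \eqref{eq:profile-symbol-bounds-all-orders} is not needed, and the uniform $C^{m}$ bounds on $[0,\tau_{\max})$ hold. The solution then extends smoothly to $\tau_{\max}$. It remains strictly $h$-convex there and still encloses $o$, by Lemma~\ref{lem:h-convexity-and-lower-curvature-hyperbolic} and the $C^0$ bound. Short-time existence for the parabolic equation on $\Gamma_k^+$ then continues it past $\tau_{\max}$, a contradiction.

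\textbf{Main obstacle.} The expected difficulty is the uniformity in $\tau$ of the coefficients, since $\lambda\to\infty$. We would handle it by the scaling identities above, taking care that time derivatives of the explicit $\lambda$-dependence, via \eqref{eq:lambda-f-tau-derivative}, are also bounded by the $\mathcal P_j$. This bound on the time derivatives is needed for the Hölder-in-time estimates.
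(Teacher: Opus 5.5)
Your proposal is correct and follows essentially the same route as the paper. It gets uniform parabolicity from the $C^0$--$C^2$ estimates and the structure from the concavity of $\sigma_k^{1/k}$: your implicit form $G(\hat h)=\bigl((\gamma_0\widetilde\rho-\partial_\tau\widetilde\rho)/(\lambda^\beta f v)\bigr)^{1/(k\alpha)}$ is just the paper's convex-level-set condition rewritten. It then uses Krylov/Evans--Krylov and Schauder bootstrapping, with $\tau$-uniform coefficient bounds from $\mathcal P_j$ and \eqref{eq:lambda-f-tau-derivative}, and proves continuation because $\rho$ stays in a compact subset of $(0,\infty)$ when $\tau_{\max}<\infty$.

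One caution: for $k\alpha>1$ your $\Theta$ is convex, not concave, in $\partial_\tau\widetilde\rho$, so it is not jointly concave in $(\partial_\tau\widetilde\rho,\nabla^2\widetilde\rho)$. The regularity theorem you invoke must therefore be the version that needs concavity only in the Hessian for fixed $\partial_\tau\widetilde\rho$, i.e.\ convex level sets. That is what the paper cites from Andrews (Theorem~6) and Krylov, rather than the textbook Evans--Krylov statement for concave operators.
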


\begin{proof}
Set $u=\widetilde\rho$, $\rho=u/\lambda$, and
$\mathcal S=\sigma_k^{1/k}$.  In any fixed coordinate chart on
$\mathbb S^n$, the normalized equation has the form
\begin{equation}\label{eq:normalized-scalar-G-equation}
  u_\tau
  =G(\theta,\tau,u,Du,D^2u).
\end{equation}
For fixed $(\theta,\tau,z,p)$, the radial graph formula expresses the
normalized Weingarten map, after conjugation by a positive definite
matrix depending only on $(\theta,\tau,z,p)$, as an affine function
$\mathcal H(r)$ of $r=D^2u$.  Thus
\[
  G(\theta,\tau,z,p,r)
  =\gamma_0z
   -\mathcal A(\theta,\tau,z,p)
       \mathcal S(\mathcal H(r))^{k\alpha},
  \qquad
  \mathcal A
  =\lambda^\beta f(z/\lambda)
    \left(
      1+\frac{|p|_\sigma^2}
       {\lambda^2\sinh^2(z/\lambda)}
    \right)^{1/2}.
\]
The normalized $C^0$, $C^1$, and $C^2$ estimates imply that
$\mathcal A$ is bounded above and below by positive constants, that the
eigenvalues of $\mathcal H=\hat h$ remain in a fixed compact subset of
$\Gamma_+$, and that all conjugating matrices and their inverses are
uniformly bounded.  Consequently, the linearization of
\eqref{eq:normalized-scalar-G-equation} is uniformly parabolic.

Although $\sigma_k^\alpha$ need not be concave, the required level-set
condition is preserved under the positive power.  Indeed, for fixed
$(\theta,\tau,z,p)$, every nontrivial sublevel set of
$r\mapsto G(\theta,\tau,z,p,r)$ is of the form
\[
  \{r:G(\theta,\tau,z,p,r)\leq c\}
  =
  \{r:\mathcal S(\mathcal H(r))\geq d\}
\]
for some $d>0$.  This set is convex because
$\mathcal S=\sigma_k^{1/k}$ is concave on $\Gamma_k^+$ and
$\mathcal H$ is affine in $r$.  To match the sign convention in the
parabolic regularity theorem, set $v=-u$.  Its equation is
\[
  v_\tau
  =\widetilde G(\theta,\tau,v,Dv,D^2v),
  \qquad
  \widetilde G(\theta,\tau,z,p,r)
  :=-G(\theta,\tau,-z,-p,-r).
\]
The operator $\widetilde G$ is still uniformly parabolic, and its
superlevel sets in the Hessian variable are convex.  Equivalently, if
$\widetilde G^{ij}M_{ij}=0$, then
\[
  \widetilde G^{ij,kl}M_{ij}M_{kl}\leq0.
\]
Indeed, on a tangent direction the first variation of $\mathcal S$
vanishes, so the term produced by differentiating the power
$\mathcal S^{k\alpha}$ twice vanishes, while the remaining term is
nonpositive by the concavity of $\mathcal S$.  Hence the classical
$C^{2,\vartheta}$ regularity theory for uniformly parabolic equations
with convex level sets applies; see
Andrews~\cite[Theorem~6]{Andrews04} and Krylov~\cite{Krylov87}.
This is the same regularity step used in~\cite{LXZ}.  Standard
parabolic Schauder estimates and bootstrapping then give all higher
derivative estimates on every compact normalized time interval.

If $\tau_{\max}<\infty$, the normalized $C^0$ estimate and the
boundedness of $\lambda$ on $[0,\tau_{\max})$ keep
$\rho=u/\lambda$ in a compact subinterval of $(0,\infty)$.  All
derivatives of the coefficients are therefore bounded up to
$\tau_{\max}$, and the preceding interior estimates, together with
the smooth short-time estimates near $\tau=0$, allow the solution to
be continued past $\tau_{\max}$.  This contradiction proves
$\tau_{\max}=\infty$.

Finally, under \eqref{eq:profile-symbol-bounds-all-orders},
\[
  \partial_z^j\!\left[\lambda^\beta f(z/\lambda)\right]
  =\lambda^{\beta-j}f^{(j)}(z/\lambda)
\]
is uniformly bounded for every $j\geq0$ when $z$ ranges in the compact
interval supplied by the normalized $C^0$ estimate.  Moreover, with
$r=z/\lambda$ and $z$ held fixed,
\[
  \partial_\tau\partial_z^j
  \!\left[\lambda^\beta f(z/\lambda)\right]
  =\gamma_0\lambda^{\beta-j}
    \left((\beta-j)f^{(j)}(r)-r f^{(j+1)}(r)\right).
\]
Thus \eqref{eq:lambda-f-tau-derivative} and the symbol bounds show
that every $\tau$-derivative of the coefficients is again a uniformly
bounded expression of the same form.  The normalized
hyperbolic factors and all their derivatives are uniformly bounded as
well.  Thus the constants in the $C^{2,\vartheta}$ and Schauder
estimates are uniform on moving time strips.  Combining these estimates
with the smooth short-time solution proves
\eqref{eq:all-higher-order-estimates-hyperbolic}.
\end{proof}

\begin{remark}
The all-orders condition
\eqref{eq:profile-symbol-bounds-all-orders} is used only to make the
higher-order estimates uniform as $\tau\to\infty$.  On every finite
normalized time interval, the smoothness of $f$ on $(0,\infty)$ is
sufficient for continuation.
\end{remark}

\section{Exponential Decay}
In this section, we prove the exponential decay of all positive-order
spatial derivatives of the normalized radial function. We first
give a lemma concerning only the radial weight function $f$ which will be useful in the gradient estimate. We then introduce a new
radial variable and apply the maximum principle to its gradient to
obtain exponential first-derivative decay.  Finally, the uniform
$C^\infty$ estimates from the previous section and the interpolation
inequalities on $\mathbb S^n$ propagate this decay to all spatial
derivatives.

\begin{lemma}
\label{lem:radial-weight-derivative-asymptotics-hyperbolic}
Assume that the radial weight satisfies the hypotheses of either
Theorem~\ref{thm:main} or Theorem~\ref{thm:main-critical}.  Then, as
$r\downarrow0$,
\begin{equation}\label{eq:profile-first-derivative-sign-hyperbolic}
  f'(r)-(1+k\alpha)f(r)\coth r
  =\begin{cases}
    (\beta-1-k\alpha+o(1))r^{\beta-1},
      &\beta>1+k\alpha,\\
    O(r^{k\alpha+\delta_0}),
      &\beta=1+k\alpha,
  \end{cases}
\end{equation}
where, in the critical case, $\delta_0:=\min\{\delta,2\}>0$.
\end{lemma}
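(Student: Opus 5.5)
The plan is to split $f=\sinh^\beta r+g$ and expand each part separately near $r=0$, as in Lemmas~\ref{lem:profile-logarithmic-derivative-bound-hyperbolic} and \ref{lem:critical-profile-hyperbolic}. For the model term I would compute exactly:
\[
  (\sinh^\beta r)'-(1+k\alpha)\sinh^\beta r\coth r
  =(\beta-1-k\alpha)\sinh^{\beta-1}r\cosh r .
\]
Since $\sinh^{\beta-1}r\cosh r=r^{\beta-1}(1+O(r^2))$, the model part equals $(\beta-1-k\alpha)r^{\beta-1}(1+O(r^2))$ in the supercritical case. In the critical case $\beta=1+k\alpha$ it vanishes identically, which is the key cancellation.

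It then remains to bound the remainder $g'(r)-(1+k\alpha)g(r)\coth r$, using $\coth r\le C/r$ on $(0,R_0]$.

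\emph{Supercritical case.} Write $m=\lfloor\beta\rfloor$. Exactly as in the proof of Lemma~\ref{lem:profile-logarithmic-derivative-bound-hyperbolic}, Taylor's theorem with $g\in C^{m+1}$ and $g^{(j)}(0)=0$ for $j\le m$ gives $g(r)=O(r^{m+1})$ and $g'(r)=O(r^m)$. Hence the remainder is $O(r^m)$. Since $m>\beta-1$, this is $o(r^{\beta-1})$, and it is absorbed into the $o(1)r^{\beta-1}$ term.

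\emph{Critical case.} Here $\beta=1+k\alpha$, so the target is $O(r^{\beta-1+\delta_0})$. The term $g\coth r$ is immediate: by \eqref{eq:C0-critical-remainder-hyperbolic}, $g(r)\coth r=O(r^{\beta+\delta-1})$, and this is $O(r^{\beta-1+\delta_0})$ because $\delta_0\le\delta$ and $r$ is bounded.

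The derivative $g'$ is the step I expect to be the main obstacle, since an $O$-bound on $g$ does not in general transfer to $g'$. I would handle it through the regularity hypothesis $g\in C^N$ with $N=\lceil\beta+\delta\rceil$. As noted in the proof of Lemma~\ref{lem:profile-logarithmic-derivative-bound-hyperbolic}, this gives $g^{(j)}(0)=0$ for $j\le N-1$, because any nonzero Taylor coefficient of order $j<\beta+\delta$ would contradict $g=O(r^{\beta+\delta})$. Then Taylor's theorem applied to $g'$ gives $g'(r)=O(r^{N-1})$, and $N-1\ge\beta+\delta-1\ge\beta-1+\delta_0$.

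Two edge cases need care. First, if $\beta+\delta$ is an integer, then $N=\beta+\delta$ and the order-$N$ coefficient may be nonzero; but this is harmless, since the bound $O(r^{N-1})$ for $g'$ still has exponent $N-1=\beta-1+\delta\ge\beta-1+\delta_0$. Second, for small $r$ we have $r^{N-1}\le r^{\beta-1+\delta_0}$, so the bound is in the required form.

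Combining the model computation with the remainder estimates gives both cases of \eqref{eq:profile-first-derivative-sign-hyperbolic}.
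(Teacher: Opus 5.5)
Your proposal is correct and follows the paper's proof essentially step for step. It uses the exact identity $(\sinh^\beta r)'-(1+k\alpha)\sinh^\beta r\coth r=(\beta-1-k\alpha)\sinh^{\beta-1}r\cosh r$ for the model term, which vanishes identically in the critical case, and controls the remainder by Taylor's theorem together with $\coth r=O(r^{-1})$, using the vanishing of $g^{(j)}(0)$ for $j\le m$ in the supercritical case and for $j\le N-1$ in the critical case (forced by $g=O(r^{\beta+\delta})$ and $g\in C^N$).
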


\begin{proof}
Recall $f(r)=\sinh^\beta r+g(r)$. 
\begin{itemize}
  \item Under the assumptions of Theorem~\ref{thm:main}, we have
      \[
  g(r)=O(r^{m+1}),
  \qquad
  g'(r)=O(r^m),
\]
where $m = \lfloor\beta\rfloor$. Since $m>\beta-1$ and $\coth r=O(r^{-1})$, it follows that
\[
  g'(r)-(1+k\alpha)g(r)\coth r=o(r^{\beta-1}).
\]
On the other hand,
\[
  (\sinh^\beta r)'-(1+k\alpha)\sinh^\beta r\coth r
  =(\beta-1-k\alpha)\sinh^{\beta-1}r\cosh r.
\]
This proves the first line of
\eqref{eq:profile-first-derivative-sign-hyperbolic}.
  \item Under the assumptions of Theorem~\ref{thm:main-critical}, let
      $N:=\lceil\beta+\delta\rceil$ be the regularity order specified
      in \eqref{eq:C0-critical-regularity-order-hyperbolic}.  Since
      \[
        N-1<\beta+\delta\leq N,
      \]
      the assumptions $g\in C^N$ and $g(r)=O(r^{\beta+\delta})$ force
      \[
        g^{(j)}(0)=0,
        \qquad 0\leq j<N.
      \]
      Taylor's theorem applied to $g'$ then gives
      \[
        g'(r)=O(r^{N-1})=O(r^{\beta+\delta-1}),
      \]
      because $N-1\geq\beta+\delta-1$.  Since $\coth r=O(r^{-1})$, we
      also have
      \[
        g(r)\coth r=O(r^{\beta+\delta-1}).
      \]

Consequently,
\[
  f'(r)-(1+k\alpha)f(r)\coth r
  =O(r^{k\alpha+\delta})
  =O(r^{k\alpha+\delta_0}),
\]
which proves the second line.
\end{itemize}
\end{proof}

\begin{lemma}[Exponential decay of the normalized gradient]
\label{lem:exponential-gradient-decay-hyperbolic}
Assume that the conditions of either Theorem~\ref{thm:main} or
Theorem~\ref{thm:main-critical} hold.  Then there exist constants
$C,c>0$ such that
\begin{equation}\label{eq:exponential-gradient-decay-hyperbolic}
  |\bar{\nabla}\widetilde\rho(\cdot,\tau)|_\sigma
  \leq Ce^{-c\tau}
  \qquad\text{for all }\tau\geq0.
\end{equation}
\end{lemma}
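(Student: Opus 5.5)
Following the section introduction, I will introduce a new radial variable $\varphi=\varphi(\widetilde\rho,\tau)$ and apply the maximum principle to $w:=\tfrac12|\nabla\varphi|_\sigma^2$. The natural candidate comes from the model sphere computation in Example~\ref{ex:critical-sphere}. There $\tanh\rho$ (more generally a primitive adapted to $\sinh\rho$) linearizes the evolution, so I take
\[
  \varphi:=\log\Bigl(\lambda\tanh\frac{\widetilde\rho}{\lambda}\Bigr)
  \quad\text{or, to leading order,}\quad
  \varphi=\log\widetilde\rho .
\]
By the $C^0$ bound and the identity $\lambda\tanh(\widetilde\rho/\lambda)=\widetilde\rho(1+O(\lambda^{-2}))$, either choice is comparable. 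I will commit to $\varphi=\log(\lambda\tanh\rho)$, since then $|\nabla\varphi|^2=|\nabla\rho|^2/(\sinh^2\rho\cosh^2\rho)\cdot\cosh^2\rho$, which is essentially $v^2-1$. So $w$ is a function of $v$ alone, and decay of $w$ is decay of $v-1$. By the $C^0$ estimate, bounding $w$ bounds $|\nabla\widetilde\rho|_\sigma$.

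The key steps are as follows. First, rewrite \eqref{eq:normalized-radial-flow-hat-h} as a scalar equation $\varphi_\tau=\mathcal G(\theta,\tau,\varphi,\nabla\varphi,\nabla^2\varphi)$. Using \eqref{eq:weingarten-map-rho}, the Weingarten map in terms of $\varphi$ takes the form $\lambda^{-1}h=\frac{\cosh\rho}{v\,\lambda\sinh\rho}\bigl(\delta-\text{(conj.)}(\nabla^2\varphi-\text{gradient terms})\bigr)$. Second, differentiate in direction $e_l$, contract with $\nabla_l\varphi$, and evaluate at a spatial maximum of $w$. The second-order terms give $a^{ij}(\nabla_{ij}w-\nabla_{il}\varphi\nabla_{jl}\varphi)$ with $a^{ij}$ uniformly elliptic by Lemma~\ref{lem:normalized-C2-estimate-hyperbolic}. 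The curvature of $\mathbb S^n$ (Ricci identity) contributes $-a^{ij}(\sigma_{ij}|\nabla\varphi|^2-\nabla_i\varphi\nabla_j\varphi)\le -c\,\mathrm{tr}(a)\,w$ up to the rank-one correction, which is a good term. The gradient-dependent part of the coefficients vanishes to second order in $\nabla\varphi$, so by the $C^1$, $C^2$ bounds it contributes $O(w)$ times a small factor or cancels at the maximum point. Third, identify the $\varphi$-dependence of the zero-order part, i.e.\ $\partial_\varphi\mathcal G$. This is where Lemma~\ref{lem:radial-weight-derivative-asymptotics-hyperbolic} enters. For the sphere-like reference, $\partial_\varphi\mathcal G$ is proportional to
\[
  -\lambda^{\beta-k\alpha}\,\frac{\rho}{\lambda}\,\bigl(f'(\rho)-(1+k\alpha)f(\rho)\coth\rho\bigr)\coth^{k\alpha}\rho\cdot(\ldots).
\]
In the supercritical case this is $\le -c<0$ uniformly, because the prefactor scales like $\widetilde\rho^{\beta-1}\ge c$. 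In the critical case it is $O(\rho^{\delta_0})=O(e^{-c\tau})$, since $\rho\le C_0e^{-\gamma_0\tau}$. Combining, at a maximum point I get
\[
  \frac{d}{d\tau}w_{\max}\le -2c_*w_{\max}+Ce^{-c\tau}w_{\max},
\]
where $c_*>0$ comes from the sphere curvature term in the critical case (and additionally from the $\varphi$-monotonicity in the supercritical case). The lower order $\lambda^{-2}$ hyperbolic corrections are $O(e^{-2\gamma_0\tau})$ and are absorbed into $Ce^{-c\tau}w$. Integrating this gives $w\le Ce^{-2c\tau}$, and \eqref{eq:exponential-gradient-decay-hyperbolic} follows.

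I expect the main obstacle to be the second and third steps in the critical case. There, no help comes from the $\varphi$-derivative, and the entire decay must come from the spherical Ricci term together with the structure $\mathcal G$ built from $\sigma_k^{1/k}$ evaluated at $\delta-\nabla^2\varphi$ plus gradient terms. I must check that the gradient terms produced by differentiating $v$ and the conjugating matrix either cancel at the maximum of $w$, where $\nabla w=0$ gives $\nabla^2\varphi\cdot\nabla\varphi=0$ direction-wise, or are $O(w)$ with a coefficient dominated by the good term. If the choice $\log(\lambda\tanh\rho)$ leaves a bad residual, I will try instead the variable $\varphi=\int^{\rho}ds/\sinh s=\log\tanh(\rho/2)$ (plus $\log\lambda$), which is the exact conformal-flat variable for which the hyperbolic radial graph equation takes the cleanest form.
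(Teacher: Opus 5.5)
Your plan is essentially the paper's proof. The paper uses your fallback variable $\varphi=\log\tanh(\rho/2)$ and applies the maximum principle to $W=\frac12|\bar\nabla\varphi|_\sigma^2$. The coercive term comes from the Ricci identity on $\mathbb S^n$; only the $n-1$ directions orthogonal to $\bar\nabla\varphi$ contribute, which is where $n\geq2$ is used. The radial-weight term $-\lambda^{\beta-1}v\sigma_k^\alpha\bigl[f'-(1+k\alpha)f\coth\rho\bigr]2W$ is handled by Lemma~\ref{lem:radial-weight-derivative-asymptotics-hyperbolic}, exactly as you describe.

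One computation in your committed choice is wrong, though it does not break the argument. For $\varphi=\log(\lambda\tanh\rho)$ you have $\varphi_\rho=1/(\sinh\rho\cosh\rho)$. Hence $|\bar\nabla\varphi|_\sigma^2=(v^2-1)/\cosh^2\rho$, and $w$ is not a function of $v$ alone. Two consequences follow. First, $\bar\nabla v\neq0$ at the maximum of $w$. Second, the Weingarten map acquires an extra term $-\sinh^2\rho\,v^{-2}\,\bar\nabla\varphi\otimes\bar\nabla\varphi$ inside the bracket. Both contribute only $O(\rho^2w)=O(e^{-2\gamma_0\tau}w)$ at the maximum, so they are absorbed as you planned. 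With $\log\tanh(\rho/2)$ one has $v=\sqrt{1+|\bar\nabla\varphi|_\sigma^2}$ exactly, and these terms disappear.
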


\begin{proof} 
Define
\begin{equation}\label{eq:hyperbolic-logarithmic-radius}
  \varphi(\theta,\tau)
  :=\log\tanh\frac{\rho(\theta,\tau)}2.
\end{equation}
Since $\varphi_\rho=(\sinh\rho)^{-1}$, it follows
\begin{equation}\label{eq:varphi-gradient-relations}
  \bar{\nabla}\varphi=\frac{\bar{\nabla}\rho}{\sinh\rho},
  \qquad
  v=\sqrt{1+|\bar{\nabla}\varphi|_\sigma^2},
  \qquad
  \bar{\nabla}\widetilde\rho(\theta,\tau)
  =\lambda(\tau)\sinh\rho(\theta,\tau)\,\bar{\nabla}\varphi(\theta,\tau),
\end{equation}
and
\[
  \rho_i=\sinh\rho\,\varphi_i,
  \qquad
  \rho_{ij}
  =\sinh\rho\,\varphi_{ij}
   +\sinh\rho\cosh\rho\,\varphi_i\varphi_j.
\]
By Corollary~\ref{cor:lambda-sin-rho-two-sided-hyperbolic},
$\lambda\sinh\rho$ is uniformly bounded.  Hence the last identity in
\eqref{eq:varphi-gradient-relations} shows that it suffices to prove
the exponential decay of $|\bar{\nabla}\varphi|_\sigma$.
Substituting these identities into~\eqref{eq:weingarten-map-rho} and using~\eqref{eq:normalized-weingarten-map-definition}, we obtain
\begin{equation}\label{eq:hat-h-in-varphi-hyperbolic}
  \hat h_i{}^j
  =\frac{1}{\lambda v\sinh\rho}
    \left[
      \cosh\rho\,\delta_i{}^j
      -\left(
         \sigma^{jp}-\frac{\varphi^j\varphi^p}{v^2}
       \right)\varphi_{pi}
    \right].
\end{equation}
Replacing $\rho$ in \eqref{eq:radial-flow} by
$\varphi$ and using \eqref{eq:time-and-lambda-identities}, we obtain
\begin{equation}\label{eq:varphi-evolution-hyperbolic}
  \partial_\tau\varphi
  =-\lambda^{\beta-1}\frac{f(\rho)}{\sinh\rho}
      \sigma_k^\alpha(\hat h)v.
\end{equation}

Put
\begin{equation}\label{eq:gradient-auxiliary-function-hyperbolic}
  W:=\frac12|\bar{\nabla}\varphi|_\sigma^2,
\end{equation}
where the norm and derivatives are taken with respect to the standard
metric on $\mathbb S^n$. At a spatial maximum $(\theta_0,\tau)$ of $W$, we have
\begin{equation}\label{eq:C1-maximum-point-relation}
  \varphi^i\varphi_{ij}=0,\quad
  \bar{\nabla}v=0,\quad
  \text{ and }
  \bar{\nabla}^2W\leq0.
\end{equation}
Then equation \eqref{eq:varphi-evolution-hyperbolic} gives
\begin{equation}\label{eq:W-time-derivative-first-expansion-hyperbolic}
  \begin{aligned}
  \partial_\tau W
  ={}&-\varphi^m\bar{\nabla}_m
      \left[
        \lambda^{\beta-1}\frac{f(\rho)}{\sinh\rho}
        \sigma_k^\alpha(\hat h)v
      \right]\\
  ={}&-v\sigma_k^\alpha(\hat h)\,\varphi^m\bar{\nabla}_m
      \left(\lambda^{\beta-1}\frac{f(\rho)}{\sinh\rho}\right)-\lambda^{\beta-1}\frac{f(\rho)}{\sinh\rho}v
       \frac{\partial\sigma_k^\alpha}
            {\partial\hat h_i{}^j}
       \varphi^m\bar{\nabla}_m\hat h_i{}^j
         \end{aligned}
\end{equation}
where we used the fact that $\bar{\nabla}v=0$ at the maximum point.

Since $\rho_m=\sinh\rho\,\varphi_m$, we have
\begin{equation}\label{eq:coefficient-and-v-gradient-at-W-maximum-hyperbolic}
  \varphi^m\bar{\nabla}_m
  \left(\lambda^{\beta-1}\frac{f(\rho)}{\sinh\rho}\right)
  =\lambda^{\beta-1}\bigl(f'(\rho)-f(\rho)\coth\rho\bigr)
     |\bar{\nabla}\varphi|_\sigma^2.
\end{equation}

Differentiating \eqref{eq:hat-h-in-varphi-hyperbolic} and using~\eqref{eq:C1-maximum-point-relation}, we obtain 
\begin{equation}\label{eq:directional-derivative-hat-h-hyperbolic}
  \begin{aligned}
  \varphi^m\bar{\nabla}_m\hat h_i{}^j
  ={}&-\cosh\rho\,|\bar{\nabla}\varphi|_\sigma^2\hat h_i{}^j
      +\frac{\sinh\rho}{\lambda v}
       |\bar{\nabla}\varphi|_\sigma^2\delta_i{}^j-\frac{1}{\lambda v\sinh\rho}
       \left(
         \sigma^{jp}-\frac{\varphi^j\varphi^p}{v^2}
       \right)\varphi^m\varphi_{pi m}.
  \end{aligned}
\end{equation}
Using the $k\alpha$-homogeneity identity
\begin{equation}\label{eq:sigma-k-alpha-Euler-identity-C1-hyperbolic}
  \frac{\partial\sigma_k^\alpha}{\partial\hat h_i{}^j}
  \hat h_i{}^j
  =k\alpha\sigma_k^\alpha(\hat h),
\end{equation}
and substituting
\eqref{eq:coefficient-and-v-gradient-at-W-maximum-hyperbolic} and
\eqref{eq:directional-derivative-hat-h-hyperbolic} into
\eqref{eq:W-time-derivative-first-expansion-hyperbolic}, we obtain
\begin{equation}\label{eq:W-before-third-derivative-commutation-hyperbolic}
  \begin{aligned}
  \partial_\tau W
  ={}&\lambda^{\beta-2}f(\rho)
      (\dot\sigma_k^\alpha)^{pi}\varphi^m\varphi_{pi m}
   -\lambda^{\beta-1}v\sigma_k^\alpha(\hat h)
      \left[f'(\rho)-(1+k\alpha)f(\rho)\coth\rho\right]
      |\bar{\nabla}\varphi|_\sigma^2\\
   &-\lambda^{\beta-2}f(\rho)
      \frac{\partial\sigma_k^\alpha}
           {\partial\hat h_i{}^j}\delta_i{}^j
      |\bar{\nabla}\varphi|_\sigma^2.
  \end{aligned}
\end{equation}
where
\begin{equation}\label{eq:C1-linearized-coefficient-hyperbolic}
  (\dot\sigma_k^\alpha)^{pq}
  :=\frac{\partial\sigma_k^\alpha}{\partial\hat h_q{}^j}g^{jp} = \frac{\partial\sigma_k^\alpha}{\partial\hat h_q{}^j}
  \frac1{\sinh^2\rho}
  \left(
    \sigma^{jp}-\frac{\varphi^j\varphi^p}{v^2}
  \right).
\end{equation}
Clearly, $(\dot\sigma_k^\alpha)^{pq}$ is positive definite.

By the Ricci identity on $\mathbb{S}^n$, we have
\begin{equation}\label{eq:third-derivative-commutation-for-W-hyperbolic}
  \varphi^m\varphi_{pi m}
  =\bar{\nabla}_p\bar{\nabla}_iW
   -\varphi_p{}^m\varphi_{mi}
   -\bigl(\sigma_{pi}|\bar{\nabla}\varphi|_\sigma^2
           -\varphi_p\varphi_i\bigr).
\end{equation}
Consequently, at a maximum point of $W$,
\begin{equation}\label{eq:gradient-maximum-calculation-hyperbolic}
  \begin{aligned}
  \partial_\tau W
  ={}&\lambda^{\beta-2}f(\rho)
      (\dot\sigma_k^\alpha)^{pi}
      \bar{\nabla}_p\bar{\nabla}_iW
      -\lambda^{\beta-2}f(\rho)
      (\dot\sigma_k^\alpha)^{pi}
      \varphi_p{}^m\varphi_{mi}\\
   &-\lambda^{\beta-2}f(\rho)
      (\dot\sigma_k^\alpha)^{pi}
      \bigl(\sigma_{pi}|\bar{\nabla}\varphi|_\sigma^2
            -\varphi_p\varphi_i\bigr)\\
   &-\lambda^{\beta-1}v\sigma_k^\alpha(\hat h)
      \left[f'(\rho)-(1+k\alpha)f(\rho)\coth\rho\right]
      |\bar{\nabla}\varphi|_\sigma^2\\
   &-\lambda^{\beta-2}f(\rho)
      \frac{\partial\sigma_k^\alpha}
           {\partial\hat h_i{}^j}\delta_i{}^j
      |\bar{\nabla}\varphi|_\sigma^2.
  \end{aligned}
\end{equation}
If $W_{\max}(\tau)=0$, then
$\bar{\nabla}\varphi(\cdot,\tau)\equiv0$, so every point is a
maximizer of $W(\cdot,\tau)$ and
$\partial_\tau W=\varphi^m\bar{\nabla}_m(\partial_\tau\varphi)=0$
there; the desired differential inequality
\eqref{eq:gradient-maximum-component-expansion-hyperbolic} below is
then immediate.  Suppose now that $W_{\max}(\tau)>0$.
Choose at the maximum point an
orthonormal frame such that
\begin{equation}\label{eq:C1-adapted-frame-hyperbolic}
  \varphi_1=|\bar{\nabla}\varphi|_\sigma=\sqrt{2W},
  \qquad \varphi_a=0\quad(a=2,\ldots,n).
\end{equation}
Since $\varphi_1=\sqrt{2W_{\max}}>0$, the relation
$\varphi^i\varphi_{ij}=0$ at the maximum
point implies $\varphi_{1j}=0$ for every $j$.  We may therefore rotate
only the orthogonal complement of $\partial_{x^1}$ so that $(\varphi_{ab})_{a,b\geq2}$ is diagonal. Moreover,
\[
  \left(
    \sigma^{jp}-\frac{\varphi^j\varphi^p}{v^2}
  \right)
  =\operatorname{diag}(v^{-2},1,\ldots,1).
\]
It now follows from \eqref{eq:hat-h-in-varphi-hyperbolic} that
$\hat h_i{}^j$ is diagonal in the same frame.  Thus
$\partial_{x^1}$ is automatically a principal direction, and
\begin{equation}\label{eq:principal-curvatures-at-W-maximum-hyperbolic}
  \hat\kappa_1=\frac{\coth\rho}{\lambda v},
  \qquad
  \hat\kappa_a
  =\frac{\cosh\rho-\varphi_{aa}}
         {\lambda v\sinh\rho},
  \quad a=2,\ldots,n.
\end{equation}
We emphasize that, throughout the present argument, the index $1$
labels the gradient direction $\partial_{x^1}$ chosen in
\eqref{eq:C1-adapted-frame-hyperbolic}, so that
\eqref{eq:principal-curvatures-at-W-maximum-hyperbolic} and the
estimates below use direction-based labels; in particular,
$\hat\kappa_1$ here need not be the smallest principal curvature,
unlike the size-ordered convention
$\hat\kappa_1\leq\cdots\leq\hat\kappa_n$ used earlier (see the
paragraph preceding
\eqref{eq:normalized-profile-coefficient-bounds}).
Then,
$$\sigma_{pi}(\dot{\sigma}_k^{\alpha})^{pi} = \alpha\sigma_{k}^{\alpha-1}\frac{\p \sigma_k}{\p \hat{h}_i{ }^j}\delta_j{ }^i\frac{1}{\sinh^2\rho} - \alpha\sigma_k^{\alpha-1}\frac{\p\sigma_k}{\p \hat{h}_1{ }^1}\frac{1}{\sinh^2\rho}\frac{2W}{v^2}$$
and
$$(\dot{\sigma}_k^{\alpha})^{pi}\varphi_p\varphi_i = \alpha\sigma_{k}^{\alpha-1}\frac{\p \sigma_k}{\p \hat{h}_1{ }^1}\frac{1}{\sinh^2\rho}2W - \alpha\sigma_k^{\alpha-1}\frac{\p\sigma_k}{\p \hat{h}_1{ }^1}\frac{1}{\sinh^2\rho}\frac{4W^2}{v^2}.$$
Combining these two identities and using
$|\bar{\nabla}\varphi|_\sigma^2=2W$, we obtain
\begin{equation}\label{eq:hyperbolic-curvature-gradient-contraction}
  \begin{aligned}
  &\lambda^{\beta-2}f(\rho)
   (\dot\sigma_k^\alpha)^{pi}
   \bigl(\sigma_{pi}|\bar{\nabla}\varphi|_\sigma^2
         -\varphi_p\varphi_i\bigr)\\
  ={}&\frac{2W\alpha\lambda^{\beta-2}f(\rho)
             \sigma_k^{\alpha-1}}{\sinh^2\rho}
       \left(
         \frac{\partial\sigma_k}{\partial\hat h_i{}^j}
           \delta_i{}^j
         -\frac{\partial\sigma_k}{\partial\hat h_1{}^1}
       \right)\\
  ={}&\frac{2W\alpha\lambda^{\beta-2}f(\rho)
             \sigma_k^{\alpha-1}}{\sinh^2\rho}
       \sum_{a=2}^n
         \frac{\partial\sigma_k}{\partial\hat\kappa_a}.
  \end{aligned}
\end{equation}
Consequently, the first term in
\eqref{eq:gradient-maximum-calculation-hyperbolic} is nonpositive.
Writing
$W_{\max}(\tau):=\max_{\mathbb S^n}W(\cdot,\tau)$, the upper Dini
derivative of $W_{\max}$ satisfies
\[
  D^+W_{\max}(\tau)
  \leq\max_{\theta\in\operatorname{Argmax}W(\cdot,\tau)}
      \partial_\tau W(\theta,\tau)
\]
(see the convention on spatial extrema in the Notation section).
Since the preceding calculation holds at every maximizer of
$W(\cdot,\tau)$, the remaining terms give
\begin{equation}\label{eq:gradient-maximum-component-expansion-hyperbolic}
  \begin{aligned}
  D^+W_{\max}
  \leq{}&-\lambda^{\beta-1}v\sigma_k^\alpha(\hat h)
      \left[f'(\rho)-(1+k\alpha)f(\rho)\coth\rho\right]
      2W_{\max}\\
   &-\lambda^{\beta-2}\frac{f(\rho)}{\sinh^2\rho}
      \sum_{a=2}^n
      \frac{\partial\sigma_k^\alpha}{\partial\hat\kappa_a}
      \varphi_{aa}^2\\
   &-\lambda^{\beta-2}f(\rho)2W_{\max}
      \left(
        \frac{\partial\sigma_k^\alpha}{\partial\hat\kappa_1}
        +\coth^2\rho\sum_{a=2}^n
         \frac{\partial\sigma_k^\alpha}{\partial\hat\kappa_a}
      \right).
  \end{aligned}
\end{equation}

We now use the normalized $C^2$ estimate.  By
Lemma~\ref{lem:normalized-C2-estimate-hyperbolic}, the principal
curvature vector $\hat\kappa$ remains in a fixed compact subset of the
positive cone.  Hence
\begin{equation}\label{eq:uniform-ellipticity-after-C2-hyperbolic}
  0<c\leq
  \frac{\partial\sigma_k^\alpha}{\partial\hat\kappa_i}
  \leq C,
  \qquad
  0<c\leq\sigma_k^\alpha(\hat h)\leq C.
\end{equation}
Moreover, the $C^0$ estimate and
\eqref{eq:normalized-profile-coefficient-bounds} give
\begin{equation}\label{eq:decay-coefficients-hyperbolic}
  0<c\leq\lambda^\beta f(\rho)\leq C,
  \qquad
  0<c\leq\lambda^{-1}\coth\rho\leq C.
\end{equation}

By Lemma~\ref{lem:radial-weight-derivative-asymptotics-hyperbolic},
the radial-weight contribution in
\eqref{eq:gradient-maximum-component-expansion-hyperbolic} is
nonpositive for all sufficiently large $\tau$ in the supercritical
case.  In the critical case, since $\beta-1=k\alpha$ and
$\rho=\widetilde\rho/\lambda$, the same lemma and the normalized
$C^0$ estimate give
\[
  \lambda^{\beta-1}
  \left|f'(\rho)-(1+k\alpha)f(\rho)\coth\rho\right|
  \leq C(\lambda\rho)^{k\alpha}\rho^{\delta_0}
  \leq Ce^{-\delta_0\gamma_0\tau}.
\]
Together with the normalized $C^1$ estimate and
\eqref{eq:uniform-ellipticity-after-C2-hyperbolic}, this shows that the
radial-weight contribution is bounded above by
$Ce^{-\delta_0\gamma_0\tau}W_{\max}$.

Since $n\geq2$, using
\eqref{eq:uniform-ellipticity-after-C2-hyperbolic} and
\eqref{eq:decay-coefficients-hyperbolic}, the last line of
\eqref{eq:gradient-maximum-component-expansion-hyperbolic} satisfies
\begin{equation}\label{eq:hyperbolic-ambient-gradient-coercivity}
  \begin{aligned}
  &-2W_{\max}\lambda^{\beta-2}f(\rho)
      \left(
        \frac{\partial\sigma_k^\alpha}{\partial\hat\kappa_1}
        +\coth^2\rho\sum_{a=2}^n
         \frac{\partial\sigma_k^\alpha}{\partial\hat\kappa_a}
      \right)\\
  \leq{}&-2W_{\max}\lambda^\beta f(\rho)
      (\lambda^{-1}\coth\rho)^2
          \sum_{a=2}^n
          \frac{\partial\sigma_k^\alpha}{\partial\hat\kappa_a}
      \\
  \leq{}&-cW_{\max}
  \end{aligned}
\end{equation}
for all sufficiently large $\tau$.  The Hessian-square term is
nonpositive, and the exponentially small radial-weight error in the
critical case can be absorbed into the right-hand side above.  We
consequently obtain
\begin{equation}\label{eq:W-exponential-inequality-n-geq-two}
  D^+W_{\max}\leq-cW_{\max}
\end{equation}
for all sufficiently large $\tau$.

Integrating this Dini inequality and enlarging the constant
to cover the remaining compact time interval yields
\begin{equation}\label{eq:W-exponential-decay-hyperbolic}
  W_{\max}(\tau)\leq Ce^{-c\tau}.
\end{equation}
Finally, \eqref{eq:varphi-gradient-relations} and the upper bound for
$\lambda\sinh\rho$ imply
\[
  |\bar{\nabla}\widetilde\rho|_\sigma
  =\lambda\sinh\rho\,|\bar{\nabla}\varphi|_\sigma
  \leq C\sqrt{W_{\max}}
  \leq Ce^{-c\tau},
\]
after changing $c$.  This proves
\eqref{eq:exponential-gradient-decay-hyperbolic}.
\end{proof}

\begin{corollary}[Exponential decay of all spatial derivatives]
\label{cor:all-spatial-derivatives-exponential-decay-hyperbolic}
Assume that the conditions of either Theorem~\ref{thm:main} or
Theorem~\ref{thm:main-critical} hold.  Then, for every integer
$m\geq1$, there exist constants $C_m,c_m>0$ such that
\begin{equation}\label{eq:all-spatial-derivatives-exponential-decay}
  \|\bar{\nabla}^{m}\widetilde\rho(\cdot,\tau)\|_{C^0}
  \leq C_me^{-c_m\tau}
  \qquad\text{for all }\tau\geq0.
\end{equation}
\end{corollary}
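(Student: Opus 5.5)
The plan is to interpolate between the exponential gradient decay of Lemma~\ref{lem:exponential-gradient-decay-hyperbolic} and the uniform higher-order bounds of Proposition~\ref{prop:C2-to-Cinfty-normalized-hyperbolic}. The case $m=1$ is exactly \eqref{eq:exponential-gradient-decay-hyperbolic}. For $m\geq2$, apply a Gagliardo--Nirenberg type interpolation inequality on the compact manifold $(\mathbb S^n,\sigma)$ to the function $w:=\widetilde\rho(\cdot,\tau)-\overline{\widetilde\rho}(\tau)$, where $\overline{\widetilde\rho}(\tau)$ denotes the spherical average. Subtracting the average is harmless, since it does not change $\bar\nabla^m\widetilde\rho$ for $m\geq1$, and it gives $\|w\|_{C^0}\leq C\|\bar\nabla\widetilde\rho\|_{C^0}\leq Ce^{-c\tau}$ by integrating along great circles. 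In particular, no information about the limit value of $\widetilde\rho$ is needed here.

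Concretely, fix $m\geq2$ and choose an integer $M>m$. The interpolation inequality
\[
  \|\bar\nabla^m w\|_{C^0}\leq C\,\|w\|_{C^1}^{\theta}\,\|w\|_{C^{M}}^{1-\theta},
  \qquad \theta=\frac{M-m}{M-1}\in(0,1),
\]
holds with $C$ depending only on $n,m,M$. It can be obtained, for instance, by iterating the elementary estimate $\|\bar\nabla^j w\|_{C^0}^2\leq C\|\bar\nabla^{j-1}w\|_{C^0}\|\bar\nabla^{j+1}w\|_{C^0}+\text{lower order}$ in local coordinates, together with a finite atlas of $\mathbb S^n$. The lower-order curvature terms arising from commuting covariant derivatives on $\mathbb S^n$ are absorbed into $\|w\|_{C^1}$.

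Proposition~\ref{prop:C2-to-Cinfty-normalized-hyperbolic} bounds $\|w\|_{C^M}\leq 2C_M$ uniformly in $\tau\geq0$, because the all-orders condition \eqref{eq:profile-symbol-bounds-all-orders} is assumed in both theorems. The first factor decays like $e^{-c\tau}$. Hence $\|\bar\nabla^m\widetilde\rho\|_{C^0}\leq C_me^{-c\theta\tau}$, which gives \eqref{eq:all-spatial-derivatives-exponential-decay} with $c_m=c\,\theta$.

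The main point needing care is the validity of the interpolation inequality in the $C^0$-based form on a curved compact manifold, with the lower-order terms correctly handled. The safest route is to work in a finite atlas with Euclidean difference-quotient interpolation, and to use that covariant and coordinate derivatives differ by terms of lower order with bounded coefficients. If a clean $C^0$ version is inconvenient, I would instead interpolate in Sobolev spaces $H^s(\mathbb S^n)$, using $\|w\|_{H^1}\leq C\|w\|_{C^1}$ and uniform $H^{s'}$ bounds, and then apply Sobolev embedding with $s>m+n/2$; this still yields exponential decay, with a smaller rate.
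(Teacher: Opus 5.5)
Your argument is correct, but it takes a different route from the paper. The paper does not interpolate in sup-norms. It applies the $L^2$ interpolation inequality
\[
\int_{\mathbb S^n}|\bar\nabla^jT|^2\leq C\Bigl(\int_{\mathbb S^n}|\bar\nabla^NT|^2\Bigr)^{j/N}\Bigl(\int_{\mathbb S^n}|T|^2\Bigr)^{1-j/N}
\]
directly to the tensor $T=\bar\nabla\widetilde\rho$. Since $T$ itself already decays, no mean subtraction is needed. This gives exponential $L^2$ decay of $\bar\nabla^{m+s}\widetilde\rho$ for $0\leq s\leq\ell$ with $\ell>n/2$, and the paper then passes to $C^0$ by Sobolev embedding. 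That is essentially the fallback you describe at the end.

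What the paper's $L^2$ version buys is that, on a closed manifold, it follows from integration by parts alone: $\|\bar\nabla^jT\|_{L^2}^2\leq\sqrt n\,\|\bar\nabla^{j-1}T\|_{L^2}\|\bar\nabla^{j+1}T\|_{L^2}$. There are no charts, cutoffs, or lower-order curvature terms to track. Your $C^0$ Landau--Kolmogorov route needs the chart bookkeeping you sketch. In exchange, it avoids Sobolev embedding and loses no derivatives to the dimension, giving the explicit rate $c(M-m)/(M-1)$. Your mean subtraction, with $|w|\leq\pi\|\bar\nabla w\|_{C^0}$ along great circles, is a valid way to make the zeroth-order factor decay. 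Applying the interpolation to $\bar\nabla\widetilde\rho$ instead, as the paper does, would make it unnecessary.

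One small imprecision: converting $\bar\nabla^jw$ into coordinate derivatives produces lower-order terms involving all intermediate derivatives $\partial^iw$, $1\leq i<j$, not just $\|w\|_{C^1}$. These are harmless. Interpolating each one between $\|w\|_{C^0}$ and $\|w\|_{C^M}$, the $i=m$ term dominates because $\|w\|_{C^0}\leq\|w\|_{C^M}$, so your conclusion is unaffected.
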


\begin{proof}
The case $m=1$ is Lemma~\ref{lem:exponential-gradient-decay-hyperbolic}.
We use the interpolation argument of
Li--Sheng--Wang~\cite[(5.1)]{LSWJEMS}.  For every smooth tensor field
$T$ on $\mathbb S^n$ and integers $0\leq j\leq N$, the interpolation
inequality reads
\begin{equation}\label{eq:spherical-L2-interpolation-inequality}
  \int_{\mathbb S^n}|\bar{\nabla}^{j}T|_\sigma^2\,d\mu_\sigma
  \leq C_{N,n}
  \left(\int_{\mathbb S^n}|\bar{\nabla}^{N}T|_\sigma^2
    \,d\mu_\sigma\right)^{j/N}
  \left(\int_{\mathbb S^n}|T|_\sigma^2\,d\mu_\sigma
  \right)^{1-j/N}.
\end{equation}
Fix $m\geq2$, choose an integer $\ell>n/2$, and then choose
$N>m+\ell$.  Applying
\eqref{eq:spherical-L2-interpolation-inequality} to
$T=\bar{\nabla}\widetilde\rho$, with $j=m+s-1$ and with $N$ there
replaced by $N-1$, gives, for $0\leq s\leq\ell$,
\[
  \begin{aligned}
  \int_{\mathbb S^n}
    |\bar{\nabla}^{m+s}\widetilde\rho|_\sigma^2\,d\mu_\sigma
  \leq{}& C
  \left(\int_{\mathbb S^n}
    |\bar{\nabla}^{N}\widetilde\rho|_\sigma^2\,d\mu_\sigma
  \right)^{\frac{m+s-1}{N-1}}
  \left(\int_{\mathbb S^n}
    |\bar{\nabla}\widetilde\rho|_\sigma^2\,d\mu_\sigma
  \right)^{1-\frac{m+s-1}{N-1}}.
  \end{aligned}
\]
The first factor on the right is uniformly bounded by
Proposition~\ref{prop:C2-to-Cinfty-normalized-hyperbolic}, while the
second decays exponentially by
Lemma~\ref{lem:exponential-gradient-decay-hyperbolic}.  Consequently,
for $0\leq s\leq\ell$,
\[
  \int_{\mathbb S^n}
    |\bar{\nabla}^{m+s}\widetilde\rho|_\sigma^2\,d\mu_\sigma
  \leq C_m e^{-2c_m\tau}
\]
after decreasing $c_m>0$ if necessary.  The Sobolev embedding theorem
on $\mathbb S^n$ now yields
\[
  \|\bar{\nabla}^{m}\widetilde\rho(\cdot,\tau)\|_{C^0}
  \leq C
  \left(
    \sum_{s=0}^{\ell}
    \int_{\mathbb S^n}
      |\bar{\nabla}^{m+s}\widetilde\rho|_\sigma^2\,d\mu_\sigma
  \right)^{1/2}
  \leq C_me^{-c_m\tau}.
\]
This proves
\eqref{eq:all-spatial-derivatives-exponential-decay}.
\end{proof}

\section{Proof of Theorems}

\begin{proof}[Proof of Theorems~\ref{thm:main} and
\ref{thm:main-critical}]
Proposition~\ref{prop:C2-to-Cinfty-normalized-hyperbolic} applies after
the normalized $C^2$ estimate.  Its finite-strip estimates and the
continuation argument give long-time existence.  Under
\eqref{eq:profile-symbol-bounds-all-orders}, its moving-strip estimates
give \eqref{eq:all-higher-order-estimates-hyperbolic} uniformly in
$\tau$.

By
Corollary~\ref{cor:all-spatial-derivatives-exponential-decay-hyperbolic},
all positive-order spatial derivatives of $\widetilde\rho$ decay
exponentially as in
\eqref{eq:all-spatial-derivatives-exponential-decay}.
In particular, the oscillation of $\widetilde\rho$ decays
exponentially.  Thus the only possible limit is spatially constant.

For completeness, set
\[
  u(\theta,\tau):=\widetilde\rho(\theta,\tau),
  \qquad
  \overline\rho(\tau)
  :=\frac{1}{|\mathbb S^n|}
    \int_{\mathbb S^n}u(\theta,\tau)\,d\mu_\sigma.
\]
The exponential decay of all nonconstant spatial modes gives
\begin{equation}\label{eq:constant-mode-spatial-expansion}
  u=\overline\rho+O(e^{-c\tau}),
  \qquad
  |\bar{\nabla}u|_\sigma
   +|\nabla_\sigma^2u|_\sigma
  =O(e^{-c\tau}),
\end{equation}
uniformly on $\mathbb S^n$.  Since $u$ has fixed positive upper and
lower bounds, the radial graph formula, together with
\[
  \lambda^{-1}\coth\frac{u}{\lambda}
  =u^{-1}+O(\lambda^{-2}),
\]
implies
\begin{equation}\label{eq:constant-mode-curvature-expansion}
  v=1+O(e^{-c\tau}),
  \qquad
  \hat h_i{}^j
  =u^{-1}\delta_i{}^j
    +O(e^{-c\tau})+O(\lambda^{-2}).
\end{equation}
Here and below all tensorial error terms are measured with respect to
$\sigma$.  Since $\hat h$ stays in a fixed compact subset of the
positive cone, the smoothness of $\sigma_k^\alpha$ and the definition
of $\gamma_0$ yield
\begin{equation}\label{eq:constant-mode-curvature-function-expansion}
  \sigma_k^\alpha(\hat h)
  =\gamma_0u^{-k\alpha}
   +O(e^{-c\tau})+O(\lambda^{-2}).
\end{equation}

We next expand the radial weight.  In the supercritical case, with
$m=\lfloor\beta\rfloor$, the flatness assumption gives
$g(r)=O(r^{m+1})$.  Hence
\begin{equation}\label{eq:supercritical-normalized-weight-expansion}
  \lambda^\beta f(u/\lambda)
  =u^\beta
   +O(\lambda^{-2})
   +O(\lambda^{-(m+1-\beta)})
  =u^\beta+O(e^{-c\tau}),
\end{equation}
because $m+1-\beta>0$.  In the critical case,
$g(r)=O(r^{\beta+\delta})$ gives instead
\begin{equation}\label{eq:critical-normalized-weight-expansion}
  \lambda^\beta f(u/\lambda)
  =u^\beta
   +O(\lambda^{-2})+O(\lambda^{-\delta})
  =u^\beta+O(e^{-c\tau}).
\end{equation}
Substituting
\eqref{eq:constant-mode-curvature-expansion}--
\eqref{eq:critical-normalized-weight-expansion} into
\eqref{eq:normalized-radial-flow-hat-h}, and decreasing $c>0$ if
necessary, we obtain the pointwise equation
\begin{equation}\label{eq:constant-mode-pointwise-asymptotic-ode}
  u_\tau
  =\gamma_0u-\gamma_0u^{\beta-k\alpha}
   +O(e^{-c\tau}).
\end{equation}
After averaging, \eqref{eq:constant-mode-spatial-expansion} and the
two-sided positive bounds for $u$ give
\begin{equation}\label{eq:constant-mode-average-asymptotic-ode}
  \overline\rho'
  =\gamma_0\overline\rho
   -\gamma_0\overline\rho^{\,\beta-k\alpha}
   +O(e^{-c\tau}).
\end{equation}

In the supercritical case, put
$\mu:=\beta-k\alpha-1>0$.  Equation
\eqref{eq:constant-mode-average-asymptotic-ode} becomes
\begin{equation}\label{eq:supercritical-average-asymptotic-ode}
  \overline\rho'
  =\gamma_0\overline\rho
     \bigl(1-\overline\rho^\mu\bigr)
   +O(e^{-c\tau}).
\end{equation}
Set $z:=\overline\rho^{-\mu}$.  Since $\overline\rho$ is bounded above
and away from zero, differentiating
\eqref{eq:supercritical-average-asymptotic-ode} gives
\[
  z'=-\mu\gamma_0(z-1)+O(e^{-c\tau}).
\]
The variation-of-constants formula therefore yields
\[
  |z(\tau)-1|\leq Ce^{-c'\tau},
  \qquad
  c':=\frac12\min\{c,\mu\gamma_0\}>0.
\]
Since $\overline\rho$ remains in a fixed compact subinterval of
$(0,\infty)$, it follows, after relabelling $c'>0$, that
\begin{equation}\label{eq:supercritical-average-convergence}
  |\overline\rho(\tau)-1|\leq Ce^{-c\tau}.
\end{equation}
Together with
\eqref{eq:all-spatial-derivatives-exponential-decay}, this proves that
$\widetilde\rho$ converges exponentially to $1$ in every $C^m$ norm.

In the critical case $\beta=1+k\alpha$, the two leading terms in
\eqref{eq:constant-mode-average-asymptotic-ode} cancel and give
\begin{equation}\label{eq:critical-average-integrable-derivative}
  |\overline\rho'(\tau)|\leq Ce^{-c\tau}.
\end{equation}
Consequently, there exists $R_\infty>0$ such that
\begin{equation}\label{eq:critical-average-convergence}
  |\overline\rho(\tau)-R_\infty|
  \leq Ce^{-c\tau}.
\end{equation}
Again using
\eqref{eq:all-spatial-derivatives-exponential-decay}, we conclude that
$\widetilde\rho$ converges exponentially to $R_\infty$ in every
$C^m$ norm.

Thus, in either case, the normalized radial graphs converge smoothly
and exponentially to a constant radial graph, namely a geodesic
sphere centred at $o$.  Since
$\rho=\widetilde\rho/\lambda$ and $\lambda\to\infty$, the original
hypersurfaces contract smoothly to $o$ as $t\to\infty$.  This completes
the proofs of both theorems.
\end{proof}

{\bf AI usage.}
The authors used GPT-5.6 Sol and KIMI K3 during the preparation of this manuscript. Specifically, the AI model was used to assist with exploratory calculations for candidate model functions, such as $f(\rho)=\rho^{\beta}$ and $f(\rho)=\coth^{\beta}\rho$. These preliminary computations suggested the need for additional convexity assumptions beyond the $h$-convexity of the initial hypersurface. The insights gained informed the authors'  subsequent analysis of such assumptions for the model function $f(\rho)=\sinh^{\beta}\rho$, which was ultimately adopted. The authors have thoroughly checked all mathematical derivations and proofs and take full responsibility for the entire content of this manuscript.

{\bf Acknowledgements.}
W. Sheng was partially supported by National Key R$\&$D Program of China (No. 2022YFA1005500) and Natural Science Foundation of China under Grant No. 12571063.


\end{document}